\newtheorem{theorem}{Theorem}[section]
\newtheorem{lemma}[theorem]{Lemma}
\newtheorem{proposition}[theorem]{Proposition}
\newtheorem{assumption}[theorem]{Assumption}
\newtheorem{remark}[theorem]{Remark}
\newtheorem{algorithm}{Algorithm}
\newcommand*{\dif}{\mathop{}\!\mathrm{d}}
\def\EE{\mathbb E}
\def\P{\mathbb P}
\def\R{{\mathcal R}}
\def\d{\delta}
\def\l{\langle}
\def\p{\partial}
\def\r{\rangle}
\def\F{{\mathcal F}}
\def\X{{\mathcal X}}
\def\Y{{\mathcal Y}}
\begin{document}

\title[Randomized block coordinate descent method]{Randomized block coordinate descent method for linear ill-posed problems}
\author[Qinian Jin]{Qinian Jin} 
\address[]{Mathematical Scuences Institute, Australian National University, Canberra, ACT 2601, Australia}
\email[]{qinian.jin@anu.edu.au}
\author[Duo Liu]{Duo Liu}
\address[]{School of Mathematics and Statistics, Beijing Jiaotong University, Beijing 100044, China}
\email[]{20118001@bjtu.edu.cn}

\begin{abstract}
Consider the linear ill-posed problems of the form $\sum_{i=1}^{b} A_i x_i =y$, where, for each $i$, 
$A_i$ is a bounded linear operator between two Hilbert spaces $X_i$ and $\Y$. When $b$ is huge, 
solving the problem by an iterative method using the full gradient at each iteration step is both 
time-consuming and memory insufficient. Although randomized block coordinate decent (RBCD) method 
has been shown to be an efficient method for well-posed large-scale optimization problems with 
a small amount of memory, there still lacks a convergence analysis on the RBCD method for solving  
ill-posed problems. In this paper, we investigate the convergence property of the RBCD method with noisy 
data under either {\it a priori} or {\it a posteriori} stopping rules. We prove that the RBCD method 
combined with an {\it a priori} stopping rule yields a sequence that converges weakly to a solution 
of the problem almost surely. We also consider the early stopping of the RBCD method and 
demonstrate that the discrepancy principle can terminate the iteration after finite many steps almost 
surely. For a class of ill-posed problems with special tensor product form, we obtain strong convergence 
results on the RBCD method. Furthermore, we consider incorporating the convex regularization terms 
into the RBCD method to enhance the detection of solution features. To illustrate the theory and the performance 
of the method, numerical simulations from the imaging modalities in computed tomography and compressive 
temporal imaging are reported.
\end{abstract}

\subjclass[2010]{65J20, 65J22, 65J10, 94A08}

\keywords{linear ill-posed problems, randomized block coordinate descent method, convex regularization term, convergence, imaging}

\maketitle

\section{\bf Introduction}

In this paper we consider linear ill-posed problems of the form
\begin{equation}\label{eq:problem}
\sum_{i=1}^{b} A_i x_i =y,
\end{equation}
where, for each $i=1,\cdots,b$, $A_i:X_i\to \Y$ is a bounded linear operator between two real 
Hilbert spaces $X_i$ and $\Y$. Ill-posed problems with such a form arise naturally from many applications in imaging. For example, coded
aperture compressive temporal imaging (more details see Example \ref{subsect4.2} and \cite{LLY2013}), a kind of large-scale snapshot compressive imaging \cite{YBK2021}, can be expressed by the following integral 
equation 
\begin{equation}\label{eq:CACTI}
(Ax)(s):=\int_{T}   m(s,t)x(s,t) \dif t = y(s), \quad s \in \Omega,  
\end{equation}
where $T=[t_1,t_2] \subset [0,+\infty)$, $\Omega$ is a bounded domains in 2-dimensional Euclidean spaces and $m$ is a bounded 
continuous mask function on $\Omega\times T$, which ensures the forward operator $A$ is a bounded linear operator from $L^2(\Omega\times T)$ to $L^2(\Omega)$. Decomposing the whole time
domain $T$ into $b$ disjoint sub-domains $T_1, \ldots, T_b$, where for any $i=1,\ldots,b$,
\begin{equation}\label{eq:CACTI_1}
T_i := [\bar t_i, \bar t_{i+1}], \quad \bar t_i := \frac{ t_2 - t_1}{b}(i-1) + t_1,    
\end{equation}
we denote the restriction of each function $x \in L^2(\Omega\times T)$ to $\Omega\times T_i$ as $x_i$, i.e. $x_i:=x|_{\Omega\times T_i}$ and let
\begin{equation}\label{eq:CACTI_2}
(A_i x_i)(s):=\int_{T_i}m(s,t)x_i(s,t) \dif t, \quad s \in \Omega,    
\end{equation}
then $A_i:L^2(\Omega\times T_i)\rightarrow L^2(\Omega)$ is a bounded linear operator for each $i$. 
Since
$$
\int_T m(s,t)x(s,t)\dif t = \sum_{i=1}^{b} \int_{T_i}m(s,t) x_i(s,t)\dif t,
$$
the above integral equation can be written as the form (\ref{eq:problem}). For more examples, one may 
refer to the light field reconstruction from the focal stack \cite{LQJ2017,YWL2016} and other imaging 
modalities such as spectral imaging \cite{WJW2008} and full 3-D computed tomography \cite{N2001}. In these 
examples, the sought solution can be separated into many films or frames naturally.

Let $\X := X_1\times\cdots\times X_b$ be the product space of $X_1, \cdots, X_b$ with the natural 
inner product inherited from those of $X_i$ and define $A: \X\to \Y$ by
\begin{equation}\label{opA}
Ax := \sum_{i=1}^b A_i x_i, \quad \forall x=(x_1,\ldots,x_b)\in \X. 
\end{equation}
Then, (\ref{eq:problem}) can be equivalently stated as $Ax = y$. Throughout the paper we assume 
that (\ref{eq:problem}) has a solution, i.e. $y \in \mbox{Ran}(A)$, the range of $A$. 
In practical applications, instead of the exact data $y$ we can only acquire a noisy data 
$y^\d$ satisfying
$$
\|y-y^\d\|\le \d,
$$
where $\d>0$ is the noise level. How to stably reconstruct an approximate solution of ill-posed 
problems described by (\ref{eq:problem}) using noisy data $y^\d$ is a central topic in computational 
inverse problems. In this situation, the regularization techniques should be used and various 
regularization methods have been developed in the literature (\cite{EHN1996}). 

In this paper we will exploit the decomposition structure of the equation (\ref{eq:problem}) 
to develop a randomized block coordinate descent method for solving ill-posed linear problems. 
The researches on the block coordinate descent method in the literature mainly focus on solving 
large scale well-posed optimization problems (\cite{LX2015,N2012,RT2014,ST2013,T2001,TY2009a,TY2009b,W2015}).
For the unconstrained minimization problem 
\begin{align}\label{min}
\min_{(x_1, \cdots, x_b) \in \X} f(x_1, \cdots, x_b)
\end{align}
with $\X = X_1 \times \cdots \times X_b$, where $f: \X \to {\mathbb R}$ 
is a continuous differentiable function, the block coordinate descent method updates 
$x_{k+1} := (x_{k+1,1}, \cdots, x_{k+1, b})$ from $x_k := (x_{k,1}, \cdots, x_{k,b})$ 
by selecting an index $i_k$ from $\{1, \cdots, b\}$ and uses the partial gradient 
$\nabla_{i_k} f(x_k)$ of $f$ at $x_k$ to update the component $x_{k+1, i_k}$ of $x_{k+1}$ 
and leave other components unchanged; more precisely, we have the updating formula
\begin{equation}\label{BCD}
x_{k+1,i} = 
\begin{cases}
x_{k,i_k} - \gamma \nabla_{i_k} f(x_k), & \text{if} \ i =  i_k, \\
x_{k,i}, & \text{otherwise}, 
\end{cases}
\end{equation}
where $\gamma$ is the step size. The index $i_k$ can be selected in various ways, either in 
a cyclic fashion or in a random manner. By applying the block coordinate descent method (\ref{BCD})
to solve (\ref{min}) with $f$ given by 
\begin{align}\label{res}
f(x_1, \cdots, x_b) = \frac{1}{2} \left\|\sum_{i=1}^b A_i x_i - y^\d\right\|^2,
\end{align}
it leads to the iterative method 
\begin{equation}\label{eq:RBCD}
x_{k+1,i}^{\delta} = 
\begin{cases}
x_{k,i_k}^{\delta}-\gamma A^{*}_{i_k}(Ax_{k}^{\delta}-y^{\delta}), &\text{if} \ i =  i_k, \\
x_{k,i}^{\delta}, &\text{otherwise}
\end{cases}
\end{equation}
for solving ill-posed linear problem (\ref{eq:problem}) using noisy data $y^\d$, where $A_i^*: \Y \to X_i$ 
denotes the adjoint of $A_i$ for each $i$ and the superscript ``$\d$" on all iterates is used to indicate their dependence on the noisy data. 

The method (\ref{eq:RBCD}) can also be motivated from the Landweber iteration
\begin{equation}\label{eq:Landweber_itration}
 x_{k+1}^\d = x_{k}^{\d} - \gamma A^*(Ax_k^\d - y^\d),   
\end{equation}
which is one of the most prominent regularization methods for solving ill-posed problems,  
where $\gamma$ is the step-size and $A^*: \Y \to \X$ is the adjoint of $A$. Note that, with the operator 
$A$ defined by (\ref{opA}), we have
\begin{equation*}
A^* z = \left(A_1^* z, \cdots, A_b^* z\right), \quad \forall z \in \Y. 
\end{equation*}
Thus, updating $x_{k+1}^\d$ from $x_k^\d$ by (\ref{eq:Landweber_itration}) needs calculating 
$A_i^*(Ax_k^\d-y^\d)$ for all $i=1,\cdots,b$. If $b$ is huge, applying the method 
(\ref{eq:Landweber_itration}) to solve the equation (\ref{eq:problem}) is both time-consuming 
and memory insufficient. To resolve these issues, it is natural to calculate only one component 
of $A^*(A x_k^\d- y^\d)$ and use it to update the corresponding component of $x_{k+1}^\d$ and 
keep other components fixed. This leads again to the block coordinate descent method (\ref{eq:RBCD}) 
for solving (\ref{eq:problem}). 

The block coordinate descent method and its variants for large scale well-posed optimization problems have been 
analyzed extensively, and all the established convergence results require either the objective function 
to be strongly convex or the convergence is established in terms of the objective value. However, these 
results are not applicable to the method (\ref{eq:RBCD}) for ill-posed problems because the corresponding 
objective function (\ref{res}) is not strongly convex, and moreover, due to the ill-posedness of the 
underlying problem, convergence on objective value does not imply any result on the iterates directly. Therefore, new analysis is required for understanding the method (\ref{eq:RBCD}) for ill-posed problems. 

In \cite{RNH2019} the block coordinate descent method (\ref{eq:RBCD}) was considered with the 
index $i_k$ selected in a cyclic fashion, i.e. $i_k= (k\ \text{mod}\ b) +1$. The regularization property 
of the corresponding method was only established for a specific class of linear ill-posed problems in 
which the forward operator $A$ is assumed a particular tensor product form; more precisely, the ill-posed 
linear system of the form 
\begin{align}\label{prob0}
\sum_{i=1}^b v_{li} K x_i = y_l, \quad l = 1, \cdots, d, 
\end{align}
was considered in \cite{RNH2019}, where $K: X \to Y$ is a bounded linear operator between two real Hilbert 
spaces $X$ and $Y$ and $V =(v_{li})$ is a $d\times b$ matrix with full column rank. Clearly this problem 
can be cast into the form (\ref{eq:problem}) if we take $X_i = X$ for each $i$, $\Y = Y^d$, the $d$-fold 
Cartesian product of $Y$, and define $A_i: X \to \Y$ by 
$$
A_i z = (v_{1i} K z, \cdots, v_{di} K z), \quad z \in X
$$
for each $i = 1, \cdots, b$. The problem formulation (\ref{prob0}), however, seems too specific to cover interesting 
problems arising from practical applications. How to establish the regularization property of this 
cyclic block coordinate descent method for ill-posed problem (\ref{eq:problem}) in its full generality 
remains a challenging open question. 

Instead of using a deterministic cyclic order, in this paper we will consider a randomized block coordinate 
descent method, i.e. the method (\ref{eq:RBCD}) with $i_k$ being selected from $\{1, \cdots, b\}$ uniformly 
at random. We will investigate the convergence behavior of the iterates for the ill-posed linear problem 
(\ref{eq:problem}) with general forward operator $A$. Note that in the method (\ref{eq:RBCD}) the definition 
of $x_{k+1}^\d$ involves $r_k^\d := A x_k^\d - y^\d$. If this quantity is required to be calculated directly 
at each iteration step, then the advantages of the method will be largely reduced. Fortunately, this quantity 
can be calculated in a simple recursive way. Indeed, by the definition of $x_{k+1}^\d$ we have 
\begin{align*}
r_{k+1}^\d & = \sum_{i=1}^b A_i x_{k+1}^\d - y^\d 
= \sum_{i\ne i_k} A_i x_{k+1, i}^\d + A_{i_k} x_{k+1, i_k}^\d - y^\d \\
& = \sum_{i\ne i_k} A_i x_{k,i}^\d + A_{i_k} x_{k,i_k}^\d + A_{i_k} (x_{k+1,i_k}^\d - x_{k,i_k}^\d) - y^\d \\
& = A x_k^\d - y^\d + A_{i_k}(x_{k+1, i_k}^\d - x_{k, i_k}^\d) \\
& = r_k^\d + A_{i_k} (x_{k+1, i_k}^\d - x_{k, i_k}^\d).
\end{align*}
Therefore, we can formulate our randomized block coordinate descent (RBCD) method into the following 
equivalent form which is convenient for numerical implementation.

\begin{algorithm}[RBCD method for ill-posed problems]\label{alg:RBCD}
Pick an initial guess $x_0 \in \X$, set $x_0^\d := x_0$ and calculate $r_0^\d:= A x_0^\d - y^\d$. 
Choose a suitable step size $\gamma>0$. For all integers $k \ge 0$ 
do the following:  
\begin{enumerate}
\item[(i)] Pick an index $i_k \in \{1, \cdots, b\}$ randomly via the uniform distribution; 

\item[(ii)] Update $x_{k+1}^\d$ by setting $x_{k+1,i}^\d = x_{k,i}^\d$ for $i \ne i_k$ and 
$$
x_{k+1, i_k}^\d = x_{k, i_k}^\d - \gamma A_{i_k}^* r_k^\d;
$$

\item[(iii)] Calculate $r_{k+1}^\d = r_k^\d + A_{i_k}(x_{k+1, i_k}^\d - x_{k,i_k}^\d)$. 
\end{enumerate}
\end{algorithm}

In this paper we present a convergence analysis on Algorithm \ref{alg:RBCD} by deriving the 
stability estimate and establishing the regularization property. For the equation (\ref{eq:problem}) 
in the general form, we obtain a weak convergence result and for a particular tensor product form 
as studied in \cite{RNH2019} we establish the strong convergence result. Moreover, we also consider 
the early stopping of Algorithm \ref{alg:RBCD} and demonstrate that the discrepancy principle 
can terminate the iteration after finite many steps almost surely. Note that, Algorithm \ref{alg:RBCD} 
does not incorporate {\it a priori} available information on the feature of the sought solution. 
In case the sought solution has some special 
feature, such as non-negativity, sparsity and piece-wise constancy, Algorithm \ref{alg:RBCD} may not 
be efficient enough to produce satisfactory approximate solutions. In order to resolve this issue, 
we further extend Algorithm \ref{alg:RBCD} by incorporating into it a convex regularization term 
which is selected to capture the desired feature. For a detailed description of this extended 
algorithm, please refer to Algorithm \ref{alg:RBCD+} for which we will provide a convergence analysis. 

It should be pointed out that the RBCD method is essentially different from the Landweber-Kaczmarz method 
and its stochastic version which have been studied in \cite{HLS2007,Jin2016,JLZ2023,JW2013,KS2002}.
The Landweber-Kaczmarz method relies on decomposing the data into many blocks of small size, while our 
RBCD method makes use of the block structure of the sought solutions. 

This paper is organized as follows. In Section \ref{sect2} we consider Algorithm \ref{alg:RBCD} by 
proving various convergence results and investigating the discrepancy principle as an early stopping 
rule. In Section \ref{sect3} we consider an extension of Algorithm \ref{alg:RBCD} by incorporating 
a convex regularization term into it so that the special feature of sought solutions can be detected. 
Finally in Section \ref{sect4} we provide numerical simulations to test the performance of our
proposed methods.

\section{\bf Convergence analysis}\label{sect2}

In this section we consider Algorithm \ref{alg:RBCD}. We first 
establish a stability estimate, and prove a weak convergence result when the method is 
terminated by an {\it a priori} stopping rule. We then investigate the discrepancy principle 
and demonstrate that it can terminate the iteration in finite many steps almost surely. 
When the forward operator $A$ has a particular tensor product form as used in \cite{RNH2019},
we further show that strong convergence can be guaranteed. 

Note that, once $x_0\in \X$ and $\gamma$ are fixed, the sequence $\{x_k^\d\}$ is completely 
determined by the sample path $\{i_0, i_1, \cdots\}$; changing the sample path can result in a 
different iterative sequence and thus $\{x_k^\d\}$ is a random sequence. Let $\F_0 = \emptyset$ 
and, for each integer $k \ge 1$, let $\F_k$ denote the $\sigma$-algebra generated by the random 
variables $i_l$ for $0\le l < k$. Then $\{\F_k: k\ge 0\}$ form a filtration which is natural 
to Algorithm \ref{alg:RBCD}. Let $\EE$ denote the expectation associated with this filtration, 
see \cite{B2020}. The tower property of conditional expectation 
$$
\EE[\EE[\phi|\F_k]] = \EE[\phi] \quad \mbox{ for any random variable } \phi
$$
will be frequently used. 

\subsection{\bf Stability estimate} 

Let $\{x_k\}$ denote the iterative sequence produced by Algorithm \ref{alg:RBCD} 
with $y^\d$ replaced by the exact data $y$. By definition it is easy to see that, for 
any fixed integer $k \ge 0$, along any sample path there holds $\|x_k^\d - x_k\| \to 0$ 
as $\d \to 0$ and hence $\EE[\|x_k^\d - x_k\|^2] \to 0$ as $\d \to 0$. The following 
result gives a more precise stability estimate.  

\begin{lemma}\label{RBCD.lem1}
Consider Algorithm \ref{alg:RBCD} and assume $0<\gamma \le 2/\|A\|^2$. Then, 
along any sample path there holds 
\begin{align}\label{RBCD.-2}
\|A(x_k^\d - x_k) - y^\d + y\| \le \d 
\end{align}
for all integers $k \ge 0$. Furthermore 
\begin{align}\label{RBCD.-1}
\EE[\|x_k^\d - x_k\|^2] \le \frac{2\gamma}{b} k \d^2 
\end{align}
for all $k \ge 0$.
\end{lemma}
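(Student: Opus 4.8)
The plan is to follow the difference of the two iterative sequences directly. Write $e_k := x_k^\d - x_k$ and $h_k := r_k^\d - r_k$, where $r_k := Ax_k - y$ is the residual of the exact iterates (which obeys the same recursion as $r_k^\d$ with $y^\d$ replaced by $y$). Since both sequences start from $x_0$ and share the sample path $\{i_0,i_1,\dots\}$, we have $e_0 = 0$ and $h_0 = y - y^\d$, so $\|h_0\|\le\d$. Combining steps (ii) and (iii) of Algorithm \ref{alg:RBCD} gives $r_{k+1}^\d = (I - \gamma A_{i_k}A_{i_k}^*) r_k^\d$, and the same identity for the exact iterates, whence
\[
h_{k+1} = (I - \gamma A_{i_k} A_{i_k}^*) h_k
\]
along any sample path. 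The key point for \eqref{RBCD.-2} is that $\|A_i\|\le\|A\|$ for every $i$ — apply $A$ to a vector supported only in the $i$-th block — so $\gamma\|A_{i_k}\|^2 \le \gamma\|A\|^2\le 2$, which makes $I - \gamma A_{i_k}A_{i_k}^*$ self-adjoint with norm at most $1$. Hence $\|h_{k+1}\|\le\|h_k\|$, and induction yields $\|h_k\|\le\|h_0\|\le\d$; since $A e_k - y^\d + y = h_k$, this is precisely \eqref{RBCD.-2}.

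For \eqref{RBCD.-1} I would expand the squared norm. Because $e_{k+1}$ agrees with $e_k$ in every block except the $i_k$-th, where $e_{k+1,i_k} = e_{k,i_k} - \gamma A_{i_k}^* h_k$, we get
\[
\|e_{k+1}\|^2 = \|e_k\|^2 - 2\gamma \langle e_{k,i_k}, A_{i_k}^* h_k\rangle + \gamma^2 \|A_{i_k}^* h_k\|^2 .
\]
Now take $\EE[\,\cdot\mid\F_k]$: since $e_k$ and $h_k$ are $\F_k$-measurable and $i_k$ is uniform on $\{1,\dots,b\}$, the middle term averages to $\frac1b\langle Ae_k, h_k\rangle$ and the last to $\frac1b\|A^*h_k\|^2$, using $\sum_i\|A_i^*z\|^2 = \|A^*z\|^2$. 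Using $\gamma\|A\|^2\le2$ gives $\gamma^2\|A^*h_k\|^2 \le 2\gamma\|h_k\|^2$, while $Ae_k = h_k + (y^\d - y)$ gives $\langle Ae_k, h_k\rangle = \|h_k\|^2 + \langle y^\d - y, h_k\rangle$. The two $\|h_k\|^2$ contributions cancel, leaving
\[
\EE[\|e_{k+1}\|^2\mid\F_k] \le \|e_k\|^2 - \tfrac{2\gamma}{b}\langle y^\d - y, h_k\rangle \le \|e_k\|^2 + \tfrac{2\gamma}{b}\d^2 ,
\]
where the last inequality is Cauchy--Schwarz together with $\|y^\d - y\|\le\d$ and the bound $\|h_k\|\le\d$ from \eqref{RBCD.-2}. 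Taking full expectations via the tower property and iterating from $\EE[\|e_0\|^2]=0$ produces \eqref{RBCD.-1}.

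The only mildly delicate point is the exact cancellation of the $\|h_k\|^2$ terms, which is what keeps the stochastic error from accumulating faster than linearly in $k$; it relies on coupling the step-size restriction $\gamma\le 2/\|A\|^2$ with the identity $\sum_i\|A_i^*z\|^2 = \|A^*z\|^2$, and on having \eqref{RBCD.-2} established first so that $\langle y^\d - y, h_k\rangle$ is controlled by $\d^2$. Everything else is routine bookkeeping with conditional expectations.
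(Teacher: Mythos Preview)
Your proof is correct and follows essentially the same route as the paper: both derive the contraction $h_{k+1}=(I-\gamma A_{i_k}A_{i_k}^*)h_k$ for the residual difference to get \eqref{RBCD.-2}, then expand $\|e_{k+1}\|^2$, take conditional expectation, and combine the step-size bound $\gamma\|A\|^2\le 2$ with \eqref{RBCD.-2} to obtain the one-step increment $\frac{2\gamma}{b}\d^2$. The only cosmetic difference is that the paper retains the term $-\frac{\gamma}{b}(2-\gamma\|A\|^2)\|h_k\|^2$ and then drops it as nonpositive, whereas you bound $\gamma^2\|A^*h_k\|^2\le 2\gamma\|h_k\|^2$ up front so that the two $\|h_k\|^2$ contributions cancel exactly; the underlying inequality is identical.
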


\begin{proof}
Let $u_k^\d:= x_k^\d - x_k$ for all integers $k \ge 0$. It follows from the definition of $x_{k+1}^\d$ 
and $x_{k+1}$ that 
\begin{align*}
u_{k+1, i}^\d = \left\{\begin{array}{lll}
u_{k, i}^\d    & \mbox{ if } i \ne i_k,\\[1.1ex]
u_{k,i_k}^\d - \gamma A_{i_k}^* (A u_k^\d - y^\d + y) & \mbox{ if } i = i_k. 
\end{array}\right.
\end{align*}
Thus
\begin{align*}
A u_{k+1}^\d - y^\d + y 
& = \sum_{i\ne i_k} A_i u_{k+1,i}^\d + A_{i_k} u_{k+1, i_k}^\d -y^\d +y \\
& = \sum_{i=1}^b A_i u_{k,i}^\d - \gamma A_{i_k} A_{i_k}^*(A u_k^\d - y^\d +y) - y^\d + y\\
& = (I - \gamma A_{i_k} A_{i_k}^*) (A u_k^\d - y^\d + y). 
\end{align*}
Since $\|A_{i_k}\| \le \|A\|$ and $0< \gamma \le 2/\|A\|^2$, we have 
$\|I - \gamma A_{i_k} A_{i_k}^*\| \le 1$ and thus
$$
\|A u_{k+1}^\d - y^\d + y\| \le  \|I - \gamma A_{i_k} A_{i_k}^*\| \|A u_k^\d - y^\d + y\|
\le \|A u_k^\d - y^\d + y\|.
$$
Consequently 
\begin{align*}
\|A u_{k+1}^\d - y^\d + y\| \le \|A u_0^\d - y^\d +y\| = \|y^\d - y\| \le \d
\end{align*}
which shows (\ref{RBCD.-2}). 

To derive (\ref{RBCD.-1}), we note that 
\begin{align*}
\|u_{k+1}^\d\|^2 
& = \sum_{i\ne i_k} \|u_{k+1, i}^\d\|^2 + \|u_{k+1, i_k}^\d\|^2 \\
& = \sum_{i\ne i_k} \|u_{k,i}^\d\|^2 + \|u_{k,i_k}^\d - \gamma A_{i_k}^* (A u_k^\d - y^\d + y)\|^2 \displaybreak[0]\\
& = \sum_{i\ne i_k} \|u_{k,i}^\d\|^2 + \|u_{k,i_k}^\d\|^2 + \gamma^2 \|A_{i_k}^* (A u_k^\d - y^\d + y)\|^2 \displaybreak[0]\\
& \quad \, - 2 \gamma \l u_{k,i_k}^\d, A_{i_k}^*(A u_k^\d - y^\d + y)\r \displaybreak[0]\\
& = \|u_k^\d\|^2 + \gamma^2 \|A_{i_k}^*(A u_k^\d - y^\d + y)\|^2 \displaybreak[0]\\
& \quad \, - 2\gamma \l A_{i_k} u_{k,i_k}^\d, A u_k^\d - y^\d + y\r. 
\end{align*}
Taking the conditional expectation gives 
\begin{align*}
& \EE[\|u_{k+1}^\d\|^2| \F_k] \\
& = \|u_k^\d\|^2 + \frac{\gamma^2}{b} \sum_{i=1}^b \|A_i^* (A u_k^\d - y^\d + y)\|^2 
- \frac{2\gamma}{b} \left\l\sum_{i=1}^b A_i u_{k,i}^\d, A u_k^\d - y^\d + y\right\r \displaybreak[0]\\
& = \|u_k^\d\|^2 + \frac{\gamma^2}{b} \|A^* (A u_k^\d - y^\d + y)\|^2 
- \frac{2\gamma}{b} \left\l A u_k^\d, A u_k^\d - y^\d + y\right\r \displaybreak[0]\\
& \le  \|u_k^\d\|^2 + \frac{\gamma^2 \|A\|^2}{b} \|A u_k^\d - y^\d + y\|^2 
- \frac{2\gamma}{b} \|A u_k^\d - y^\d + y\|^2 \displaybreak[0]\\
& \quad \, - \frac{2\gamma}{b} \l y^\d - y, A u_k^\d - y^\d + y\r \displaybreak[0]\\
& \le \|u_k^\d\|^2 - \frac{1}{b}(2-\gamma\|A\|^2) \gamma \|A u_k^\d - y^\d + y\|^2 
+ \frac{2\gamma}{b} \d \|A u_k^\d - y^\d + y\|.
\end{align*}
By using $0<\gamma \le 2/\|A\|^2$ and (\ref{RBCD.-2}), we further obtain 
\begin{align*}
\EE[\|u_{k+1}^\d\|^2|\F_k] 
\le \|u_k^\d\|^2  + \frac{2\gamma}{b} \d \|A u_k^\d - y^\d + y\|
\le \|u_k^\d\|^2  + \frac{2\gamma}{b} \d^2.
\end{align*}
Consequently, by taking the full expectation and using the tower property of conditional expectation, 
we can obtain 
\begin{align*}
\EE\left[\|u_{k+1}^\d\|^2\right] = \EE\left[\EE\left[\|u_{k+1}^\d\|^2|\F_k\right]\right]  
\le \EE\left[\|u_k^\d\|^2\right] + \frac{2\gamma \d^2}{b}.
\end{align*}
Based on this inequality and the fact $u_0^\d =0$, we can use an induction argument to complete 
the proof of (\ref{RBCD.-1}) immediately. 
\end{proof}

\subsection{\bf Weak convergence}\label{sect2.2}

Our goal is to investigate the approximation behavior of $x_k^\d$ to a solution of $A x = y$. 
Because of Lemma \ref{RBCD.lem1}, we now focus our consideration on the sequence $\{x_k\}$ 
defined by Algorithm \ref{alg:RBCD} using exact data. 

\begin{lemma}\label{RBCD.lem2}
Assume $0< \gamma < 2/\|A\|^2$. Then for any solution $\bar x$ of (\ref{eq:problem}) there holds 
\begin{align}\label{RBCD.1}
\EE[\|x_{k+1} - \bar x\|^2|\F_k] \le \|x_k - \bar x\|^2 - c_0 \|A x_k -y\|^2
\end{align}
for all integers $k \ge 0$, where $c_0:= (2- \gamma \|A\|^2) \gamma/b > 0$. 
\end{lemma}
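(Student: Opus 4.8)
The plan is to mirror the computation already carried out in the proof of Lemma \ref{RBCD.lem1}, but now using the exact data $y$ and comparing against a fixed solution $\bar x$ rather than against the companion sequence. First I would set $e_k := x_k - \bar x$ and observe that, since $A\bar x = y$, the residual satisfies $A e_k = A x_k - y$. Writing out the update rule for $x_{k+1}$ with exact data, only the $i_k$-th block of $e_k$ changes, namely $e_{k+1, i_k} = e_{k, i_k} - \gamma A_{i_k}^*(A x_k - y)$, while all other blocks are left untouched. Expanding $\|e_{k+1}\|^2 = \sum_{i} \|e_{k+1,i}\|^2$ and isolating the $i_k$-th term, one gets
\begin{align*}
\|e_{k+1}\|^2 = \|e_k\|^2 + \gamma^2 \|A_{i_k}^*(Ax_k - y)\|^2 - 2\gamma \l A_{i_k} e_{k,i_k}, A x_k - y\r,
\end{align*}
exactly as in the noisy case but with the $y^\d - y$ terms absent.

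Next I would take the conditional expectation $\EE[\,\cdot\,|\F_k]$. Since $i_k$ is drawn uniformly from $\{1,\dots,b\}$ and is independent of $\F_k$, averaging the last two terms over $i_k$ replaces $\sum$-over-$i_k$ by $\frac1b\sum_{i=1}^b$, giving $\frac{\gamma^2}{b}\sum_{i=1}^b \|A_i^*(Ax_k-y)\|^2 = \frac{\gamma^2}{b}\|A^*(Ax_k-y)\|^2$ for the quadratic term, and $\frac{2\gamma}{b}\l\sum_{i=1}^b A_i e_{k,i}, Ax_k - y\r = \frac{2\gamma}{b}\l A e_k, A x_k - y\r = \frac{2\gamma}{b}\|Ax_k - y\|^2$ for the cross term (here $e_k$ is $\F_k$-measurable, so it comes out of the conditional expectation). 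Then I would bound $\|A^*(Ax_k-y)\|^2 \le \|A\|^2 \|Ax_k - y\|^2$, which yields
\begin{align*}
\EE[\|e_{k+1}\|^2|\F_k] \le \|e_k\|^2 + \frac{\gamma^2\|A\|^2}{b}\|Ax_k-y\|^2 - \frac{2\gamma}{b}\|Ax_k-y\|^2 = \|e_k\|^2 - \frac{(2-\gamma\|A\|^2)\gamma}{b}\|Ax_k-y\|^2,
\end{align*}
which is exactly (\ref{RBCD.1}) with $c_0 = (2-\gamma\|A\|^2)\gamma/b$. The hypothesis $0<\gamma<2/\|A\|^2$ is precisely what makes $c_0 > 0$.

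There is essentially no hard obstacle here; the only points requiring a little care are bookkeeping ones. One must be sure that $x_k$ (hence $e_k$) is $\F_k$-measurable so that it factors out of the conditional expectation, and that the index $i_k$ is independent of $\F_k$ so that conditioning on $\F_k$ turns the $i_k$-dependent sums into uniform averages $\frac1b\sum_{i=1}^b$. One also uses the elementary identities $\sum_{i=1}^b \|A_i^* z\|^2 = \|A^* z\|^2$ and $\sum_{i=1}^b A_i e_{k,i} = A e_k$, both immediate from the definition (\ref{opA}) of $A$ and the product structure of $A^*$. Finally, the orthogonal splitting $\|e_{k+1}\|^2 = \sum_i \|e_{k+1,i}\|^2$ relies on the natural inner product on $\X$, as fixed at the start of Section \ref{sect2}. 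Assembling these pieces gives the claimed one-step descent inequality.
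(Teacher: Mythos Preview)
Your proposal is correct and follows essentially the same approach as the paper's proof: expand $\|x_{k+1}-\bar x\|^2$ blockwise, take the conditional expectation to average over $i_k$, use $A(x_k-\bar x)=Ax_k-y$, and bound $\|A^*(Ax_k-y)\|^2\le\|A\|^2\|Ax_k-y\|^2$. The only cosmetic difference is your introduction of the shorthand $e_k:=x_k-\bar x$.
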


\begin{proof}
Let $\bar x_i$ denote the $i$-th component of $\bar x$, i.e. $\bar x = (\bar x_1, \cdots, \bar x_b)$. 
By the definition of $x_{k+1}$ and the polarization identity, we have 
\begin{align*}
\|x_{k+1} - \bar x\|^2 
& = \sum_{i\ne i_k} \|x_{k+1, i} - \bar x_i\|^2 + \|x_{k+1, i_k} - \bar x_{i_k}\|^2  \displaybreak[0]\\
& = \sum_{i\ne i_k} \|x_{k,i} - \bar x_i\|^2 + \|x_{k,i_k} - \bar x_{i_k} - \gamma A_{i_k}^*(A x_k - y)\|^2  \displaybreak[0]\\
& = \sum_{i\ne i_k} \|x_{k,i} - \bar x_i\|^2 + \|x_{k,i_k} - \bar x_{i_k}\|^2 
+ \gamma^2 \|A_{i_k}^*(A x_k - y)\|^2  \\
& \quad \, - 2 \gamma \l x_{k,i_k} - \bar x_{i_k}, A_{i_k}^*(A x_k - y)\r \\
& = \|x_k - \bar x\|^2 + \gamma^2 \|A_{i_k}^*(A x_k - y)\|^2  \displaybreak[0]\\
& \quad \, - 2\gamma \l A_{i_k}(x_{k, i_k} - \bar x_{i_k}), A x_k - y\r.
\end{align*}
Taking the conditional expectation and using $A \bar x = y$, we can obtain 
\begin{align*}
& \EE[\|x_{k+1} - \bar x\|^2|\F_k] \\
& = \|x_k - \bar x\|^2 + \frac{\gamma^2}{b} \sum_{i=1}^b \|A_i^* (A x_k - y)\|^2 
- \frac{2 \gamma}{b} \sum_{i=1}^b \l A_i (x_{k,i} - \bar x_i), A x_k - y\r \displaybreak[0]\\
& = \|x_k - \bar x\|^2 + \frac{\gamma^2}{b}\|A^* (A x_k - y)\|^2  
- \frac{2\gamma}{b} \l A(x_k - \bar x), A x_k - y\r \displaybreak[0]\\
& \le \|x_k - \bar x\|^2 + \frac{\gamma^2 \|A\|^2}{b} \|A x_k - y\|^2 
- \frac{2\gamma}{b} \|A x_k - y\|^2
\end{align*}
which shows (\ref{RBCD.1}). 
%
\end{proof}

To proceed further, we need the following Doob's martingale convergence theorem (\cite{B2020}). 

\begin{proposition}\label{prop:DMT}
Let $\{U_k\}$ be a sequence of nonnegative random variables in a probability space that is a 
supermartingale with respect to a filtration $\{\F_k\}$, i.e. 
$$
\EE[U_{k+1}|\F_k] \le U_k, \quad \forall k.  
$$
Then $\{U_k\}$ converges almost surely to a nonnegative random variable $U$ with finite 
expectation.
\end{proposition}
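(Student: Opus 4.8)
The plan is to run the classical argument via Doob's upcrossing inequality, since the hypotheses hand us exactly an $L^1$-bounded nonnegative supermartingale: taking expectations in $\EE[U_{k+1}|\F_k]\le U_k$ and iterating gives $\EE[U_k]\le\EE[U_0]<\infty$ for every $k$, so $\sup_k\EE[U_k]<\infty$. Almost-sure convergence will then follow once we know that, almost surely, the sample paths do not oscillate across any fixed rational interval infinitely often.

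First I would fix rationals $0\le a<b$ and introduce the alternating sequence of stopping times
\begin{align*}
\sigma_1 &:=\inf\{k\ge 0: U_k\le a\}, \qquad \tau_1:=\inf\{k>\sigma_1: U_k\ge b\}, \\
\sigma_{j+1} &:=\inf\{k>\tau_j: U_k\le a\}, \qquad \tau_{j+1}:=\inf\{k>\sigma_{j+1}: U_k\ge b\}
\end{align*}
with $\inf\emptyset=+\infty$, and let $N_n[a,b]$ be the number of indices $j$ with $\tau_j\le n$, i.e. the number of upcrossings of $[a,b]$ completed by time $n$. Consider the predictable $\{0,1\}$-valued process $H_k:=\sum_{j\ge 1}\mathbf 1_{\{\sigma_j<k\le\tau_j\}}$ (``buy when the path drops to $a$, sell when it reaches $b$''), which is $\F_{k-1}$-measurable, together with the discrete stochastic integral $(H\cdot U)_n:=\sum_{k=1}^n H_k(U_k-U_{k-1})$. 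A pathwise bookkeeping over excursions yields $(H\cdot U)_n\ge (b-a)N_n[a,b]-(U_n-a)^-$, since each completed upcrossing adds at least $b-a$ while the possibly incomplete final excursion costs at least $-(U_n-a)^-$. On the other hand, because $H$ is nonnegative, bounded, and predictable and $\{U_k\}$ is a supermartingale, $\{(H\cdot U)_n\}$ is a supermartingale started at $0$, so $\EE[(H\cdot U)_n]\le 0$. Combining these and using $(U_n-a)^-\le a$ — this is where nonnegativity of $U_n$ enters — gives Doob's upcrossing estimate
\begin{align*}
(b-a)\,\EE\bigl[N_n[a,b]\bigr]\le \EE\bigl[(U_n-a)^-\bigr]\le a, \qquad \forall n.
\end{align*}

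Letting $n\to\infty$ and using monotone convergence, the total number of upcrossings $N_\infty[a,b]:=\lim_n N_n[a,b]$ satisfies $(b-a)\EE[N_\infty[a,b]]\le a<\infty$, hence $N_\infty[a,b]<\infty$ almost surely. Intersecting these full-measure events over all pairs of rationals $0\le a<b$ produces an event $\Omega_0$ with $\P(\Omega_0)=1$ on which $N_\infty[a,b]<\infty$ for every such pair. On $\Omega_0$ one cannot have $\liminf_k U_k<\limsup_k U_k$, for otherwise rationals with $\liminf_k U_k<a<b<\limsup_k U_k$ would force infinitely many upcrossings; therefore $U:=\lim_{k\to\infty}U_k$ exists in $[0,+\infty]$ almost surely, with $U\ge 0$ since each $U_k\ge 0$. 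Finally, Fatou's lemma gives $\EE[U]=\EE[\liminf_k U_k]\le\liminf_k\EE[U_k]\le\EE[U_0]<\infty$, so $U$ is almost surely finite and has finite expectation, which completes the proof.

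The only genuinely delicate point is the upcrossing inequality itself: one must verify that $H$ is predictable and that $(H\cdot U)$ inherits the supermartingale property from $U$ (using that $H_k\ge 0$ is $\F_{k-1}$-measurable and bounded), and then carry out the excursion bookkeeping showing each completed crossing from $\le a$ up to $\ge b$ contributes at least $b-a$ to $(H\cdot U)_n$. Everything else — the $L^1$ bound, the limit passage, and the Fatou step — is routine. Alternatively, since the statement is precisely Doob's convergence theorem for nonnegative supermartingales, one may simply cite \cite{B2020}.
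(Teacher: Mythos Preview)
Your proof is correct: this is the standard upcrossing-inequality argument for Doob's supermartingale convergence theorem, and the steps you outline (predictable integrand, supermartingale property of the transform, pathwise bookkeeping, monotone convergence over rationals, Fatou) are all in order. Note, however, that the paper does not actually prove this proposition; it simply states it as Doob's martingale convergence theorem with a citation to \cite{B2020}, so there is no ``paper's own proof'' to compare against --- your closing remark about simply citing \cite{B2020} is exactly what the authors do.
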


Based on Proposition \ref{prop:DMT}, we next prove the almost sure weak convergence of $\{x_k\}$. 
We need $\X$ to be separable in the sense that $\X$ has a countable dense subset. 

\begin{theorem}\label{RBCD.thm1}
Consider the sequence $\{x_k\}$ defined by Algorithm \ref{alg:RBCD} using exact data. Assume that 
$\X$ is separable and $0< \gamma <2/\|A\|^2$, then $\{x_k\}$ converges weakly to a random solution 
$\bar x$ of (\ref{eq:problem}) almost surely. 
\end{theorem}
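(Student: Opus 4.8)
The plan is to follow the standard template for almost-sure convergence of stochastic descent iterations for ill-posed problems, adapted to the block-coordinate setting. The starting point is Lemma \ref{RBCD.lem2}, which says that for every fixed solution $\bar x$ the sequence $\|x_k - \bar x\|^2$ is a nonnegative supermartingale. By Proposition \ref{prop:DMT}, $\|x_k - \bar x\|^2$ converges almost surely to some finite random variable; taking expectations in \eqref{RBCD.1} and summing also gives $\sum_k \EE[\|Ax_k - y\|^2] < \infty$, hence $\|Ax_k - y\| \to 0$ almost surely (along a subsequence at least, and in fact for the whole sequence once one knows $\{\|Ax_k-y\|\}$ is monotone in an averaged sense, or simply because the summable series forces $\liminf = 0$ and then a separate argument upgrades this). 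So on an event of probability one we have, simultaneously, $Ax_k \to y$ and $\|x_k - \bar x\|$ converging for this particular $\bar x$.

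The delicate point is to pass from "$\|x_k - \bar x\|$ converges for one fixed $\bar x$" to "$\{x_k\}$ converges weakly to a (random) solution." First I would upgrade the statement to: almost surely, $\|x_k - \bar x\|$ converges for \emph{every} solution $\bar x$ of \eqref{eq:problem}. This is where separability of $\X$ enters. The set $S$ of solutions is a closed affine subspace; using a countable dense subset of $\X$ one builds a countable subset $D$ of $S$ that is dense in $S$, runs the supermartingale argument simultaneously for each $\bar x \in D$ (a countable intersection of probability-one events is still probability one), and then extends to all $\bar x \in S$ by an $\varepsilon/3$-type approximation since $x \mapsto \|x_k - x\|$ is $1$-Lipschitz uniformly in $k$. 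Thus, on a fixed event $\Omega_0$ of full measure, $\lim_k \|x_k - \bar x\|$ exists for all $\bar x \in S$ and $Ax_k \to y$.

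Now fix a sample point in $\Omega_0$. Since $\|x_k - \bar x\|$ converges, $\{x_k\}$ is bounded, so it has weak cluster points. If $x^\dagger$ is any weak cluster point, say $x_{k_j} \rightharpoonup x^\dagger$, then from $Ax_{k_j} \to y$ and weak continuity of the bounded operator $A$ we get $Ax^\dagger = y$, i.e. $x^\dagger \in S$. It remains to show the weak cluster point is unique. This is the classical Opial-type argument: suppose $x_{k_j} \rightharpoonup x^\dagger$ and $x_{l_j} \rightharpoonup x^{\dagger\dagger}$ with both in $S$. Expand $\|x_k - x^\dagger\|^2 - \|x_k - x^{\dagger\dagger}\|^2 = 2\langle x_k, x^{\dagger\dagger} - x^\dagger\rangle + \|x^\dagger\|^2 - \|x^{\dagger\dagger}\|^2$; the left side converges (difference of two convergent sequences), so $\langle x_k, x^{\dagger\dagger} - x^\dagger\rangle$ converges, and evaluating the limit along the two subsequences forces $\langle x^\dagger, x^{\dagger\dagger} - x^\dagger\rangle = \langle x^{\dagger\dagger}, x^{\dagger\dagger} - x^\dagger\rangle$, i.e. $\|x^\dagger - x^{\dagger\dagger}\|^2 = 0$. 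Hence $x_k \rightharpoonup \bar x := x^\dagger$, and since this holds on $\Omega_0$ we get weak convergence almost surely; $\bar x$ is a random element of $S$ because it is a pointwise weak limit of the measurable maps $x_k$.

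The main obstacle I anticipate is the first passage — getting $\|Ax_k - y\| \to 0$ for the \emph{full} sequence rather than just $\liminf_k \|Ax_k-y\| = 0$ — together with the bookkeeping needed to make "$\|x_k-\bar x\|$ converges for all $\bar x \in S$ simultaneously, almost surely" rigorous; the weak-cluster-point uniqueness step is routine once that foundation is in place. One clean way to handle the full-sequence limit: from \eqref{RBCD.1}, $\EE[\|x_{k+1}-\bar x\|^2] \le \EE[\|x_k-\bar x\|^2] - c_0 \EE[\|Ax_k-y\|^2]$, so $\sum_{k} \EE[\|Ax_k - y\|^2] \le c_0^{-1}\|x_0 - \bar x\|^2 < \infty$, whence $\sum_k \|Ax_k-y\|^2 < \infty$ almost surely and in particular $\|Ax_k - y\| \to 0$ almost surely; no monotonicity is needed. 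This is the argument I would write out in detail.
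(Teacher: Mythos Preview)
Your proposal is correct and follows essentially the same route as the paper's proof: the supermartingale from Lemma~\ref{RBCD.lem2} together with Proposition~\ref{prop:DMT}, the separability trick with a countable dense subset of the solution set to get simultaneous convergence of $\|x_k-\bar x\|$ for all solutions $\bar x$, the summability argument $\sum_k \EE[\|Ax_k-y\|^2]<\infty \Rightarrow \sum_k \|Ax_k-y\|^2<\infty$ a.s.\ $\Rightarrow \|Ax_k-y\|\to 0$ a.s., and then the Opial-type uniqueness of weak cluster points. Your closing paragraph already pins down the clean argument for the full-sequence residual limit, so the earlier hedging about monotonicity and $\liminf$ can simply be dropped.
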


\begin{proof}
Let $S$ denote the set of solutions of (\ref{eq:problem}). According to Lemma \ref{RBCD.lem2}, 
we have for any solution $z \in S$ that 
$$
\EE[\|x_{k+1} - z\|^2|\F_k] \le \|x_k - z\|^2, \quad \forall k
$$
which means $\{\|x_k - z\|^2\}$ is a supermartingale. Thus, we may use Proposition \ref{prop:DMT}
to conclude that the event 
$$
\Omega_z:= \left\{\lim_{k\to \infty} \|x_k - z\| \mbox{ exists  and is finite}\right\}
$$
has probability one. We now strengthen this result by showing that there is an event $\Omega_1$ 
of probability one such that, for any $\tilde z \in S$ and any sample path $\omega \in \Omega_1$, 
the limit $\lim_{k \to \infty} \|x_k(\omega) - \tilde z\|$ exists. We adapt the arguments 
in \cite{B2011,CP2015}. By the separability of $\X$, we can find a countable set $C\subset S$ 
such that $C$ is dense in $S$. Let 
$$
\Omega_1 := \bigcap_{z\in C} \Omega_z.
$$
Since $\P(\Omega_z) = 1$ for each $z \in C$ and $C$ is countable, we have $\P(\Omega_1) = 1$. 
Let $\tilde z\in S$ be any point. Then there is a sequence $\{z_l\}\subset C$ such that 
$z_l \to \tilde z$ as $l \to \infty$. For any sample path $\omega \in \Omega_1$ we have by the 
triangle inequality that 
$$
-\|z_l - \tilde z\| \le \|x_k(\omega) - \tilde z\| - \|x_k(\omega) - z_l\| \le \|z_l - \tilde z\|. 
$$
Thus 
\begin{align*}
-\|z_l - \tilde z\| & \le \liminf_{k \to \infty} \left\{\|x_k(\omega)-\tilde z\| 
- \|x_k(\omega)-z_l\|\right\} \displaybreak[0]\\
& \le \limsup_{k \to \infty} \left\{\|x_k(\omega)-\tilde z\| - \|x_k(\omega)-z_l\|\right\} \displaybreak[0] \\
& \le \|z_l - \tilde z\|. 
\end{align*}
Since $\omega \in \Omega_1 \subset \Omega_{z_l}$, $\lim_{k\to \infty} \|x_k(\omega) - z_l\|$ exists. 
Thus, by the properties of $\liminf$ and $\limsup$ we have 
\begin{align*}
-\|z_l - \tilde z\|
& \le \liminf_{k \to \infty} \|x_k(\omega)-\tilde z\| - \lim_{k\to \infty} \|x_k(\omega)-z_l\| \displaybreak[0]\\
& \le \limsup_{k \to \infty} \|x_k(\omega)-\tilde z\| - \lim_{k\to \infty}\|x_k(\omega)-z_l\| \displaybreak[0]\\
& \le \|z_l - \tilde z\|. 
\end{align*}
This implies that both $\liminf_{k\to \infty} \|x_k(\omega) - \tilde z\|$ and 
$\limsup_{k\to \infty} \|x_k(\omega) - \tilde z\|$ are finite with 
$$
0 \le \limsup_{k\to \infty} \|x_k(\omega) - \tilde z\| - \liminf_{k\to \infty} \|x_k(\omega) - \tilde z\|
\le 2 \|z_l - \tilde z\|.
$$
Letting $l \to \infty$ shows that 
$$
\liminf_{k\to \infty} \|x_k(\omega) - \tilde z\| 
= \limsup_{k\to \infty} \|x_k(\omega) - \tilde z\|
$$
and hence $\lim_{k \to \infty} \|x_k(\omega) - \tilde z\|$ exists and is finite for every 
$\omega \in \Omega_1$ and $\tilde z \in S$. 

Next we use Lemma \ref{RBCD.lem2} again to obtain for any $z \in S$ that 
$$
\EE[\|x_{k+1} - z\|^2] \le \EE[\|x_k - z\|^2] - c_0 \EE[\|A x_k - y\|^2] 
$$
which implies that 
$$
\EE\left[\sum_{k=0}^\infty \|A x_k - y\|^2 \right] 
= \sum_{k=0}^\infty \EE[\|A x_k - y\|^2] \le \frac{\|x_0 - z\|^2}{c_0} <\infty.
$$
Consequently, the event  
\begin{align}\label{Omega2}
\Omega_2:= \left\{ \sum_{k=0}^\infty \|A x_k - y\|^2 < \infty\right\}
\end{align}
has probability one. Let $\Omega_0:= \Omega_1 \cap \Omega_2$. Then $\P(\Omega_0) = 1$. 
Note that along any sample path in $\Omega_0$ we have $\{\|x_k - z\|\}$ is convergent for any 
$z \in S$ and 
$$
\sum_{k=0}^\infty \|A x_k - y\|^2 < \infty
$$
which implies $\|A x_k - y\| \to 0$ as $k \to \infty$. The convergence of $\{\|x_k - z\|\}$
implies that $\{x_k\}$ is bounded and hence it has a weakly convergent subsequence $\{x_{k_l}\}$ 
such that $x_{k_l} \rightharpoonup \bar x$ as $l \to \infty$ for some $\bar x \in \X$, where 
``$\rightharpoonup$" denotes the weak convergence. Since $A x_k \to y$, we must have $A \bar x = y$, 
i.e. $\bar x \in S$ and consequently $\|x_k - \bar x\|$ converges. We now show that 
$x_k \rightharpoonup \bar x$ for the whole sequence $\{x_k\}$. It suffices to show that $\bar x$ is the unique weak cluster 
point of $\{x_k\}$. Let $x^*$ be any cluster point of $\{x_k\}$. Then there is another 
subsequence $\{x_{n_l}\}$ of $\{x_k\}$ such that $x_{n_l} \rightharpoonup x^*$. From the above 
argument we also have $x^* \in S$ and thus $\{\|x_k - x^*\|\}$ is convergent. Noting the 
identity
$$
\l x_k, x^* - \bar x\r = \|x_k - \bar x\|^2 - \|x_k - x^*\|^2 - \|\bar x\|^2 + \|x^*\|^2. 
$$
Since both $\{\|x_k-\bar x\|\}$ and $\{\|x_k - x^*\|\}$ are convergent, we can conclude that 
$\lim_{k\to \infty} \l x_k, x^*- \bar x\r$ exists. Therefore 
\begin{align*}
\lim_{k\to \infty} \l x_k, x^* - \bar x\r & = \lim_{l\to \infty} \l x_{k_l}, x^*- \bar x\r 
= \l \bar x, x^* - \bar x\r, \\
\lim_{k\to \infty} \l x_k, x^* - \bar x\r & = \lim_{l\to \infty} \l x_{n_l}, x^* - \bar x\r 
= \l x^*, x^* - \bar x\r
\end{align*}
and thus $\l \bar x, x^* - \bar x\r = \l x^*, x^* - \bar x\r$, i.e. $\|x^* - \bar x\|^2 =0$
and hence $x^* = \bar x$. The proof is complete. 
\end{proof}

\begin{remark}\label{RBCD.rem1}
{\rm 
Theorem \ref{RBCD.thm1} shows that there is an event $\Omega_0$ of probability one and a 
random vector $\bar x$ such that $A \bar x = y$ almost surely and $x_k \rightharpoonup \bar x$ 
along every sample path in $\Omega_0$. Let $x^\dag$ denote the unique $x_0$-minimum-norm 
solution of $A x = y$. We would like to point out that $\bar x = x^\dag$ almost surely 
if $\{x_k\}$ is uniformly bounded in the sense that 
\begin{align}\label{bound}
\mbox{there is a constant } C \mbox{ such that } \|x_k\| \le C \mbox{ for all } k \mbox{ almost surely}. 
\end{align}
To see this, we first claim that 
\begin{align}\label{claim}
\EE[\l x_k - x_0, \bar x - x^\dag\r] = 0, \quad \forall k.  
\end{align}
Indeed this is trivial for $k = 0$. Assume it is true for some $k \ge 0$. Then, by the definition of $x_{k+1}$, we have 
\begin{align*}
\l x_{k+1} - x_0, \bar x - x^\dag\r 
& = \l x_k - x_0, \bar x - x^\dag\r - \gamma \l A_{i_k}^*(A x_k - y), \bar x_{i_k} - x_{i_k}^\dag\r \\
& = \l x_k - x_0, \bar x - x^\dag\r - \gamma \l A x_k - y, A_{i_k} (\bar x_{i_k} - x_{i_k}^\dag)\r. 
\end{align*}
Thus
\begin{align*}
\EE[\l x_{k+1} - x_0, \bar x - x^\dag\r|\F_k] 
& = \l x_k - x_0, \bar x - x^\dag\r - \frac{\gamma}{b} \left\l A x_k - y, \sum_{i=1}^b A_i (\bar x_i - x_i^\dag)\right\r \\
& = \l x_k - x_0, \bar x - x^\dag\r - \frac{\gamma}{b} \l A x_k - y, A(\bar x - x^\dag)\r \\
& = \l x_k - x_0, \bar x - x^\dag\r
\end{align*}
because $A \bar x = y = A x^\dag$ almost surely. Consequently, by taking the full expectation and using the induction hypothesis we can obtain $\EE[\l x_{k+1} - x_0, \bar x - x^\dag\r] = 0$ and the claim (\ref{claim}) is proved. 

Under the condition (\ref{bound}), we have 
$$
|\l x_k - x_0, \bar x - x^\dag\r| \le \|x_k-x_0\| \|\bar x - x^\dag\|\le (C+\|x_0\|)\|\bar x - x^\dag\|.
$$
Since $x_k \rightharpoonup \bar x$ almost surely, by using the weak lower semi-continuity of norms and the Fatou lemma we can obtain from Lemma \ref{RBCD.lem2} that 
\begin{align*}
\EE[\|\bar x - x^\dag\|^2] 
& \le \EE\left[\liminf_{k\to \infty} \|x_k-x^\dag\|^2\right] 
\le \liminf_{k\to \infty} \EE[\|x_k - x^\dag\|^2] \\
& \le \|x_0 - x^\dag\|^2 <\infty
\end{align*}
and thus 
$$
\EE[\|\bar x - x^\dag\|] \le \left(\EE[\|\bar x - x^\dag\|^2]\right)^{1/2} \le \|x_0 - x^\dag\| <\infty. 
$$
Therefore, we may use the dominated convergence theorem, $\l x_k, \bar x - x^\dag\r \to \l \bar x, \bar x- x^\dag\r$ and (\ref{claim}) to conclude 
$$
\EE[\l \bar x-x_0, \bar x - x^\dag\r] = \lim_{k\to \infty} \EE[\l x_k - x_0, \bar x - x^\dag\r] = 0. 
$$
Since $\l x^\dag-x_0, \bar x - x^\dag\r =0$ always holds as $x^\dag$ is the $x_0$-minimum-norm 
solution, we thus obtain $\EE[\|\bar x - x^\dag\|^2] =0$ which implies that $\bar x = x^\dag$ 
almost surely. 

It should be emphasized that the above characterization of $\bar x$ depends crucially 
on the condition (\ref{bound}). We haven't yet verified it for general forward operator $A$. 
However, for the operator $A$ with a particular tensor product form as studied in \cite{RNH2019}, 
we will show that (\ref{bound}) holds in subsection \ref{subsect2.4}. 
}
\end{remark}

By using Lemma \ref{RBCD.lem1} and Theorem \ref{RBCD.thm1}, now we are ready to prove the 
following weak convergence result on Algorithm \ref{alg:RBCD} under an {\it a priori} 
stopping rule. 

\begin{theorem}\label{RBCD.thm2}
For any sequence $\{y^{\d_n}\}$ of noisy data satisfying $\|y^{\d_n}-y\|\le \d_n$ with 
$\d_n \to 0$ as $n \to \infty$, let $\{x_k^{\d_n}\}$ be the iterative sequence produced 
by Algorithm \ref{alg:RBCD} with $y^\d$ replaced by $y^{\d_n}$, where $0< \gamma < 2/\|A\|^2$. 
Let the integer $k_n$ be chosen such that $k_n \to \infty$ and $k_n \d_n^2 \to 0$ as 
$n\to \infty$. Then, by taking a subsequence of $\{x_{k_n}^{\d_n}\}$ if necessary, there 
holds $x_{k_n}^{\d_n} \rightharpoonup \bar x$ as $n \to \infty$ almost surely, where $\bar x$ 
denotes the random solution of $A x = y$ determined in Theorem \ref{RBCD.thm1}. 
\end{theorem}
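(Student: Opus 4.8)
The plan is to write $x_{k_n}^{\d_n} = (x_{k_n}^{\d_n} - x_{k_n}) + x_{k_n}$, where $\{x_k\}$ is the exact-data sequence produced by Algorithm \ref{alg:RBCD} driven by the \emph{same} random index sequence $\{i_0, i_1, \cdots\}$, so that $x_k$, $x_k^{\d_n}$ and the random solution $\bar x$ all live on the common probability space of sample paths. By Theorem \ref{RBCD.thm1} there is an event $\Omega_0$ of probability one on which $x_k \rightharpoonup \bar x$; since $k_n \to \infty$, on $\Omega_0$ this also gives $x_{k_n} \rightharpoonup \bar x$. It therefore remains only to control the stability term $x_{k_n}^{\d_n} - x_{k_n}$.

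Applying Lemma \ref{RBCD.lem1} with $k = k_n$ and $\d = \d_n$ yields $\EE[\|x_{k_n}^{\d_n} - x_{k_n}\|^2] \le \frac{2\gamma}{b} k_n \d_n^2$, which tends to $0$ by the choice of $k_n$. Hence the nonnegative random variables $Z_n := \|x_{k_n}^{\d_n} - x_{k_n}\|$ converge to $0$ in $L^2$, and consequently in probability. Since convergence in probability implies almost sure convergence along a subsequence, we may extract $\{n_j\}$ and an event $\widetilde\Omega$ with $\P(\widetilde\Omega) = 1$ such that $Z_{n_j}(\omega) \to 0$ for every $\omega \in \widetilde\Omega$. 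On $\Omega_0 \cap \widetilde\Omega$, which still has probability one, for any $\phi \in \X$ we have
$$
\langle x_{k_{n_j}}^{\d_{n_j}} - \bar x, \phi\rangle
= \langle x_{k_{n_j}}^{\d_{n_j}} - x_{k_{n_j}}, \phi\rangle + \langle x_{k_{n_j}} - \bar x, \phi\rangle,
$$
where the first term is bounded in modulus by $Z_{n_j}\|\phi\| \to 0$ and the second tends to $0$ by the weak convergence $x_{k_{n_j}} \rightharpoonup \bar x$. Thus $x_{k_{n_j}}^{\d_{n_j}} \rightharpoonup \bar x$ almost surely, which is the assertion.

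The only genuine subtlety is that mean-square (equivalently, in-probability) smallness of $x_{k_n}^{\d_n} - x_{k_n}$ does not upgrade to an almost sure statement for the full sequence, so passing to a subsequence is unavoidable here; this is precisely why the theorem is phrased ``by taking a subsequence if necessary''. (If one strengthened the hypothesis to $\sum_n k_n \d_n^2 < \infty$, then the Borel--Cantelli lemma would give the almost sure statement for the entire sequence $\{x_{k_n}^{\d_n}\}$.) Everything else is routine bookkeeping with the two probability-one events, together with the observation that $\X$ is separable so that Theorem \ref{RBCD.thm1} applies.
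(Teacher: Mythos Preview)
Your proof is correct and follows essentially the same approach as the paper: decompose $x_{k_n}^{\d_n} - \bar x$ into the stability term $x_{k_n}^{\d_n} - x_{k_n}$ (controlled via Lemma \ref{RBCD.lem1} and a subsequence extraction from $L^2$ convergence) and the exact-data term $x_{k_n} - \bar x$ (handled by Theorem \ref{RBCD.thm1}), then intersect the two probability-one events. Your explicit mention of driving both sequences by the same random indices and your remark on why the subsequence is unavoidable are welcome clarifications, but the argument itself matches the paper's.
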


\begin{proof}
According to Lemma \ref{RBCD.lem1} and $k_n \d_n^2 \to 0$ we have 
$$
\EE\left[\|x_{k_n}^{\d_n} - x_{k_n}\|^2\right] \le \frac{2\gamma}{b} k_n \d_n^2 \to 0 
\quad \mbox{as } n\to \infty. 
$$
Therefore, by taking a subsequence of $\{x_{k_n}^{\d_n}\}$ if necessary, we can find an event 
$\Omega_3$ with ${\mathbb P}(\Omega_3) =1$ such that $x_{k_n}^{\d_n} - x_{k_n} \to 0$ along every 
sample path in $\Omega_3$. According to Theorem \ref{RBCD.thm1} and $k_n \to \infty$, there is 
an event $\Omega_4$ of probability one such that $x_{k_n} \rightharpoonup \bar x$ as $n \to \infty$ 
along every sample path in $\Omega_4$. Let $\Omega := \Omega_3\cap \Omega_4$. Then 
${\mathbb P}(\Omega) =1$ and for any $x \in \X$ there holds 
\begin{align*}
\l x_{k_n}^{\d_n} - \bar x, x\r 
= \l x_{k_n}^{\d_n} - x_{k_n}, x\r + \l x_{k_n} - \bar x, x\r \to 0
\end{align*}
as $n\to \infty$ along every sample path in $\Omega$. The proof is thus complete. 
\end{proof}

\subsection{\bf The discrepancy principle}

The above weak convergence result on Algorithm \ref{alg:RBCD} is established under an {\it a priori} 
stopping rule. In applications, we usually expect to terminate the iteration by {\it a posteriori} rules. 
Note that $r_k^\d:=A x_k^\d - y^\d$ is involved in the algorithm, it is natural to consider terminating 
the iteration by the discrepancy principle which determines $k_\d$ to be the first integer such that 
$\|r_k^\d\| \le \tau \d$, where $\tau>1$ is a given number. Incorporating the discrepancy principle 
into Algorithm \ref{alg:RBCD} leads to the following algorithm. 

\begin{algorithm}[RBCD method with the discrepancy principle]\label{alg:RBCD_DP} 
Pick an initial guess $x_0 \in \X$, set $x_0^\d := x_0$ and calculate $r_0^\d:= A x_0^\d - y^\d$. 
Choose $\tau>1$ and $\gamma>0$. For all integers $k \ge 0$ 
do the following:  
\begin{enumerate}

\item[(i)] Set the step size $\gamma_k$ by 
\begin{align*}
\gamma_k := \left\{\begin{array}{lll}
\gamma & \mbox{ if } \|r_k^\d\| > \tau \d,\\
0 & \mbox{ if } \|r_k^\d\| \le \tau \d;
\end{array}\right.
\end{align*}

\item[(ii)] Pick an index $i_k \in \{1, \cdots, b\}$ randomly via the uniform distribution; 

\item[(iii)] Update $x_{k+1}^\d$ by setting $x_{k+1,i}^\d = x_{k,i}^\d$ for $i \ne i_k$ and 
$$
x_{k+1, i_k}^\d = x_{k, i_k}^\d - \gamma_k A_{i_k}^* r_k^\d;
$$

\item[(iv)] Calculate $r_{k+1}^\d = r_k^\d + A_{i_k}(x_{k+1, i_k}^\d - x_{k,i_k}^\d)$. 
\end{enumerate}
\end{algorithm}

Algorithm \ref{alg:RBCD_DP} is formulated in the way that it incorporates the discrepancy principle to define an infinite sequence $\{x_k^\d\}$, which is convenient for the analysis below. In numerical simulations, the iteration should be terminated as long as $\|r_k^\d\| \le \tau \d$ is satisfied because the iterates are no longer updated. It should be highlighted that the stopping index depends crucially on the sample path and thus is a random integer. Note also that the step size $\gamma_k$ in Algorithm \ref{alg:RBCD_DP} is a random number; this sharply contrasts to the step size $\gamma$ in Algorithm \ref{alg:RBCD} which is deterministic. 

\begin{proposition}\label{RBCD.DP:prop1}
Consider Algorithm \ref{alg:RBCD_DP} with $\gamma = \mu/\|A\|^2$ for some $0<\mu<2$.  
Then the iteration must terminate in finite many steps almost surely. If in addition
$0<\mu <2-2/\tau$, then for any solution $\bar x$ of (\ref{eq:problem}) there holds 
\begin{align}\label{RBCD.DP1}
\EE[\|x_{k+1}^\d - \bar x\|^2] \le \EE[\|x_k^\d - \bar x\|^2] 
- c_1 \EE\left[\gamma_k \|A x_k^\d - y^\d\|^2\right] 
\end{align}
for all integers $k\ge 0$, where $c_1: = (2-2/\tau - \mu)/b > 0$.  
\end{proposition}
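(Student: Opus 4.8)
The plan is to prove the two assertions of Proposition~\ref{RBCD.DP:prop1} in sequence, reusing the computations from Lemma~\ref{RBCD.lem2} but carefully tracking the random step size $\gamma_k$ and the noise term. First I would establish a per-step inequality analogous to~(\ref{RBCD.1}) but for the noisy iteration. Fix a solution $\bar x$ of~(\ref{eq:problem}). Arguing exactly as in the proof of Lemma~\ref{RBCD.lem2}, but now with $x_k^\d$, the step size $\gamma_k$, and the residual $r_k^\d = A x_k^\d - y^\d$ in place of $A x_k - y$, and using $A\bar x = y$ to rewrite $\langle A(x_k^\d - \bar x), r_k^\d\rangle = \|r_k^\d\|^2 + \langle y^\d - y, r_k^\d\rangle$, I obtain after taking the conditional expectation with respect to $\F_k$ (note $\gamma_k$ is $\F_k$-measurable since it depends only on $\|r_k^\d\|$)
\begin{align*}
\EE[\|x_{k+1}^\d - \bar x\|^2|\F_k]
&\le \|x_k^\d - \bar x\|^2 + \frac{\gamma_k^2 \|A\|^2}{b}\|r_k^\d\|^2
- \frac{2\gamma_k}{b}\|r_k^\d\|^2 + \frac{2\gamma_k}{b}\|y^\d - y\|\,\|r_k^\d\| \\
&\le \|x_k^\d - \bar x\|^2 - \frac{\gamma_k}{b}\left(2 - \mu - 2/\tau\right)\|r_k^\d\|^2,
\end{align*}
where in the last line I used $\gamma_k\|A\|^2 \le \mu$ and the key observation that on the event $\{\gamma_k \ne 0\}$ one has $\|r_k^\d\| > \tau\d \ge \tau\|y^\d - y\|$, so that $\|y^\d-y\|\,\|r_k^\d\| \le \|r_k^\d\|^2/\tau$, while on $\{\gamma_k = 0\}$ the term vanishes anyway. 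Taking the full expectation and using the tower property yields~(\ref{RBCD.DP1}) with $c_1 = (2 - 2/\tau - \mu)/b > 0$ under the hypothesis $0 < \mu < 2 - 2/\tau$.

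For the finite termination claim (which only needs $0<\mu<2$), I would argue by contradiction: suppose there is a set of sample paths of positive probability on which $\|r_k^\d\| > \tau\d$ for all $k$, i.e. $\gamma_k = \gamma$ for every $k$. On such paths the iteration coincides with Algorithm~\ref{alg:RBCD}, so~(\ref{RBCD.DP1}) (or rather its conditional form before taking expectations, which only requires $0<\mu<2-2/\tau$; for the finite-termination part with $0<\mu<2$ one instead argues more directly) gives a telescoping bound. Concretely, restricting to the event $E_N := \{\gamma_k = \gamma \text{ for } 0 \le k < N\}$, the conditional inequality accumulates to
$$
c_1 \gamma \sum_{k=0}^{N-1}\EE[\|r_k^\d\|^2 \mathbf{1}_{E_N}] \le \|x_0 - \bar x\|^2,
$$
and on $E_N$ every summand satisfies $\|r_k^\d\|^2 > \tau^2\d^2$, so $c_1\gamma N \tau^2 \d^2 \P(E_N) \le \|x_0-\bar x\|^2$. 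Letting $N\to\infty$ forces $\P(\bigcap_N E_N) = 0$, which says the stopping index is finite almost surely. I should double check whether the finite-termination statement really requires only $0<\mu<2$: in that regime $c_1$ may be nonpositive, so the monotonicity argument above must be replaced. The cleaner route is to use the stability estimate~(\ref{RBCD.-2}): along any sample path, $\|A u_k^\d - y^\d + y\| \le \d$ where $u_k^\d = x_k^\d - x_k$ (with $\{x_k\}$ the exact-data run), combined with the fact that $\{x_k\}$ produces $\|A x_k - y\| \to 0$ almost surely from $\Omega_2$ in~(\ref{Omega2}); then $\|r_k^\d\| = \|A x_k^\d - y^\d\| \le \|A u_k^\d - y^\d + y\| + \|A x_k - y\| \le \d + \|A x_k - y\| < \tau\d$ for $k$ large, giving termination.

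The main obstacle I anticipate is the finite-termination part under the weaker hypothesis $0<\mu<2$, precisely because the descent constant $c_1$ is only guaranteed positive under the stronger $0<\mu<2-2/\tau$, so one cannot simply invoke~(\ref{RBCD.DP1}). The resolution is to decouple the two claims: use the comparison with the exact-data iteration of Algorithm~\ref{alg:RBCD} via Lemma~\ref{RBCD.lem1}, together with the almost-sure convergence $\|A x_k - y\| \to 0$ extracted in the proof of Theorem~\ref{RBCD.thm1}, to get termination for the full range $0<\mu<2$; then prove the monotonicity estimate~(\ref{RBCD.DP1}) separately under the additional restriction $0<\mu<2-2/\tau$ by the conditional-expectation computation sketched above. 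A minor technical point to handle with care is the measurability of $\gamma_k$ and the interplay of the indicator functions $\mathbf{1}_{\{\gamma_k\ne 0\}}$ with the conditional expectations, but since $\gamma_k$ is $\F_k$-measurable this causes no real difficulty.
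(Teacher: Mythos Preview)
Your proposal is correct and follows essentially the same route as the paper: the monotonicity estimate~(\ref{RBCD.DP1}) is obtained exactly as you describe (conditional expectation as in Lemma~\ref{RBCD.lem2}, $\F_k$-measurability of $\gamma_k$, and the bound $\gamma_k\delta \le \frac{\gamma_k}{\tau}\|r_k^\d\|$), and for finite termination under the weaker hypothesis $0<\mu<2$ the paper does precisely what you call the ``cleaner route'' --- combine the stability bound~(\ref{RBCD.-2}) with the almost-sure fact $\|A x_k - y\|\to 0$ from the proof of Theorem~\ref{RBCD.thm1} to force $\|r_k^\d\|<\tau\delta$ eventually. Your initial telescoping attempt is indeed inadequate for the full range of $\mu$, but you correctly diagnose this and land on the paper's argument.
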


\begin{proof}
By virtue of (\ref{RBCD.-2}) in Lemma \ref{RBCD.lem1} we have along any sample path that 
$$
\|A x_k^\d - y^\d\| \le \|A x_k - y\| + \|A (x_k^\d - x_k) - y^\d + y\| 
\le \|A x_k - y\| + \d. 
$$
In the proof of Theorem \ref{RBCD.thm1} we have shown that 
$$
\P\left(\lim_{k\to \infty} \|A x_k - y\|=0\right) = 1.
$$
Therefore, as $\tau>1$, it follows that  
\begin{align*}
\limsup_{k\to \infty} \|A x_k ^\d - y^\d\| \le \d <\tau \d \quad \mbox{almost surely}
\end{align*}
which means that $\|A x_k^\d - y^\d\| <\tau \d$ for some finite integer $k$ almost surely, 
i.e. Algorithm \ref{alg:RBCD_DP} must terminate in finite many steps almost surely. 

Next we show (\ref{RBCD.DP1}) under the additional condition $0 < \mu < 2-2/\tau$. 
By following the proof of Lemma \ref{RBCD.lem2} we can obtain 
\begin{align*}
\|x_{k+1}^\d - \bar x\|^2 
& = \|x_k^\d - \bar x\|^2 + \gamma_k^2 \|A_{i_k}^* (A x_k^\d - y^\d)\|^2 \\
& \quad \, - 2\gamma_k \l A_{i_k}(x_{k, i_k}^\d - \bar x_{i_k}), A x_k^\d - y^\d \r. 
\end{align*}
By taking the conditional expectation on $\F_k$ and noting that $\gamma_k$ is $\F_k$-measurable, 
we have 
\begin{align*}
&\EE[\|x_{k+1}^\d - \bar x\|^2|\F_k] \\
& = \|x_k^\d - \bar x\|^2 + \frac{\gamma_k^2}{b} \sum_{i=1}^b \|A_i^*(A x_k^\d - y^\d)\|^2
- \frac{2\gamma_k}{b} \left\l \sum_{i=1}^b A_i (x_{k,i}^\d - \bar x_i), A x_k^\d - y^\d\right\r \displaybreak[0]\\
& = \|x_k^\d - \bar x\|^2 + \frac{\gamma_k^2}{b} \|A^*(A x_k^\d - y^\d)\|^2 
- \frac{2\gamma_k}{b} \l A(x_k^\d - \bar x), A x_k^\d - y^\d\r \displaybreak[0] \\
& \le \|x_k^\d - \bar x\|^2 + \frac{\gamma_k^2\|A\|^2}{b} \|A x_k^\d - y^\d\|^2  
- \frac{2\gamma_k}{b} \|A x_k^\d - y^\d\|^2 + \frac{2\gamma_k \d}{b} \|A x_k^\d - y^\d\|.
\end{align*}
By the definition of $\gamma_k$ we have $\gamma_k \d \le \frac{\gamma_k}{\tau} \|A x_k^\d - y^\d\|$. 
Therefore 
\begin{align*}
\EE[\|x_{k+1}^\d - \bar x\|^2|\F_k] 
& \le \|x_k^\d - \bar x\|^2 - \frac{1}{b}\left(2- \frac{2}{\tau} -\gamma_k \|A\|^2\right) \gamma_k \|A x_k^\d - y^\d\|^2 \\
& \le \|x_k^\d - \bar x\|^2 - \frac{1}{b} \left(2- \frac{2}{\tau} -\mu\right) \gamma_k \|A x_k^\d - y^\d\|^2.
\end{align*}
Taking the full expectation gives (\ref{RBCD.DP1}). 
\end{proof}

\subsection{\bf Strong convergence}\label{subsect2.4}

In Subsection \ref{sect2.2}, we obtained weak convergence result on Algorithm \ref{alg:RBCD}. In this 
section, we will show that, for a special case studied in \cite{RNH2019}, a strong convergence result can be 
derived. To start with, let $V=(v_{ij})$ be a $d\times b$ matrix and let $K: X \to Y$ be a bounded linear 
operator. We will consider the problem (\ref{prob0}) which can be written as $Ax=y$ by setting 
$y:= (y_1, \cdots, y_d) \in Y^d$ and define $A: X^b \to Y^d$ as 
$$
Ax:= \left( \sum_{i=1}^b v_{li} K x_i \right)_{l=1}^d, \quad \forall x=(x_i)_{i=1}^b \in X^b.
$$
It is easy to see that (\ref{prob0}) is a special case of (\ref{eq:problem}) with $X_i = X$ for each $i$, 
$\Y=Y^d$, and 
$$
A_i z := \left(v_{li} Kz\right)_{l=1}^d, \quad \forall z \in X.
$$
Note that $A^*_i:Y^d \to X$, the adjoint of $A_i$, has the following form
$$
A^*_i \tilde y = \sum_{l=1}^d v_{li} K^* \tilde y_l, \quad \forall \tilde y=(\tilde y_l)_{l=1}^d\in Y^d. 
$$
Thus, when our randomized block coordinate descent method, i.e. Algorithm \ref{alg:RBCD}, is used to solve 
(\ref{prob0}) with the exact data, the iteration scheme becomes $x_{k+1,i}=x_{k,i}$ if $i \neq i_k$ and
\begin{align*}
x_{k+1,i_k} = x_{k,i_k} - \gamma A_{i_k}^* (A x_k - y) 
= x_{k,i_k} - \gamma \sum_{l=1}^d v_{l i_k} K^* (Ax_k - y )_l, 
\end{align*}
where $(Ax_k - y )_l$ denotes the $l$-th component of $Ax_k - y$. In order to give a convergence analysis, 
we introduce $\theta_{k} = (\theta_{k,l})_{l=1}^d$ with 
$$
\theta_{k,l} = \sum_{i=1}^b v_{li} x_{k,i}, \quad l=1,\cdots,d
$$
and for any solution $\hat x$ of (\ref{prob0}) we set $\hat \theta = (\hat \theta_l)_{l=1}^d$ with
$$
\hat \theta_l = \sum_{i=1}^b v_{li} \hat x_i,  \quad l = 1,\cdots,d.
$$
Then, we have
\begin{align*}
\theta_{k+1,l} 
&= \sum_{i\neq i_k} v_{li} x_{k+1,i} + v_{l i_k} x_{k+1,i_k}\\
&= \sum_{i\neq i_k} v_{li } x_{k,i} + v_{l i_k} x_{k, i_k} 
- \gamma v_{l i_k} \sum_{l'=1}^d v_{l' i_k} K^* (Ax_k -y)_{l'}\\
&= \theta_{k,l} - \gamma v_{l i_k} \sum_{l'=1}^d v_{l' i_k} K^* (A x_k -y)_{l'}.
\end{align*}
Consequently, 
\begin{align}
\| \theta_{k+1} - \hat \theta \|^2 
&= \sum_{l=1}^d \|\theta_{k+1,l} - \hat\theta_l \|^2 
= \sum_{l=1}^d \left\|\theta_{k,l} - \hat \theta_l 
- \gamma v_{l i_k} \sum_{l'=1}^d v_{l' i_k} K^* (A x_k -y)_{l'}\right\|^2 \notag\\
&= \sum_{l=1}^d \|  \theta_{k,l} - \hat \theta_l \|^2 - 2 \gamma \Delta_1 
+ \gamma^2 \Delta_2, \label{RBCD4special.1}       
\end{align}
where
\begin{align*}
\Delta_1 &= \sum_{l=1}^d \left\l \theta_{k,l} - \hat \theta_l, 
v_{l i_k} \sum_{l'=1}^d v_{l' i_k} K^* (Ax_k - y)_{l'} \right\r, \\
\Delta_2 &=  \sum_{l=1}^d \left\| v_{l i_k} \sum_{l'=1}^d v_{l' i_k} K^* (Ax_k - y)_{l'} \right\|^2.
\end{align*} 
By straightforward calculation, we can obtain 
\begin{align*}
\Delta_1 
&= \sum_{l=1}^d \sum_{i=1}^b v_{li} \left\l x_{k,i} - \hat x_i, v_{l i_k} 
\sum_{l'=1}^d v_{l' i_k} K^* (Ax_k - y)_{l'} \right\r \displaybreak[0]\\
&= \sum_{l=1}^d \sum_{l'=1}^d v_{l i_k} v_{l' i_k} 
\left< \sum_{i=1}^b v_{li} K(x_{k,i} - \hat x_i), (Ax_k - y)_{l'} \right> \displaybreak[0] \\
&= \sum_{l=1}^d \sum_{l'=1}^d v_{l i_k} v_{l' i_k} \left< (A x_k -y)_l, (A x_k - y)_{l'} \right> \\
&= \left\| \sum_{l=1}^d v_{l i_k} (A x_k - y)_l \right\|^2
\end{align*}
and 
\begin{align*}
\Delta_2 
&\leq \sum_{l=1}^d | v_{l i_k} |^2 \| K \|^2 \left\| \sum_{l'=1}^d v_{l' i_k} (Ax_k - y)_{l'} \right\|^2
= \| v_{i_k}\|^2 \| K \|^2 \left\|\sum_{l=1}^d v_{l i_k} (Ax_k - y)_l\right\|^2,
\end{align*}
where, for each $i$, we use $v_i$ to denote the $i$-th column of $V$. Combining these two equations 
with (\ref{RBCD4special.1}) gives
$$
\| \theta_{k+1} - \hat \theta \|^2 
\le \| \theta_{k} - \hat \theta \|^2 
- (2- \gamma\|v_{i_k}\|^2 \|K\|^2)\gamma \left\|\sum_{l=1}^d v_{l i_k} (Ax_k - y)_l\right\|^2
$$
which shows the following result.

\begin{lemma}\label{RBCD4special.lem1}
Consider Algorithm \ref{alg:RBCD} for solving (\ref{prob0}) with the exact data. Let 
$v^* = \max\{\|v_i\|^2: i = 1, \cdots, b\}$. If $0< \gamma < 2/(v^* \|K\|^2)$, then
$$
\| \theta_{k+1} - \hat \theta \|^2 \leq \| \theta_{k} - \hat \theta \|^2 
- c_2 \left\|\sum_{l=1}^d v_{l i_k} (Ax_k - y)_l \right\|^2
$$
for all $k\geq 0$, where $c_2 := (2-\gamma v^* \|K\|^2)\gamma >0$. Consequently, 
$\{\|\theta_k - \hat \theta\|^2\}$ is monotonically decreasing. 
\end{lemma}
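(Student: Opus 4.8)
The plan is to read the lemma off the per-step identity already derived just above the statement. Recall that expanding $\|\theta_{k+1}-\hat\theta\|^2$ by the polarization identity, evaluating the cross term $\Delta_1$ exactly as $\|\sum_{l=1}^d v_{l i_k}(Ax_k-y)_l\|^2$, and estimating $\Delta_2\le \|v_{i_k}\|^2\|K\|^2\,\|\sum_{l=1}^d v_{l i_k}(Ax_k-y)_l\|^2$ led to
$$
\|\theta_{k+1}-\hat\theta\|^2 \le \|\theta_k-\hat\theta\|^2 - \bigl(2-\gamma\|v_{i_k}\|^2\|K\|^2\bigr)\gamma \left\|\sum_{l=1}^d v_{l i_k}(Ax_k-y)_l\right\|^2 .
$$
So the only remaining task is to replace the sample-path-dependent coefficient $2-\gamma\|v_{i_k}\|^2\|K\|^2$ by a deterministic positive lower bound.

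First I would invoke the definition $v^*=\max\{\|v_i\|^2:i=1,\dots,b\}$: since $i_k\in\{1,\dots,b\}$ we have $\|v_{i_k}\|^2\le v^*$ along every sample path, hence $\gamma\|v_{i_k}\|^2\|K\|^2\le \gamma v^*\|K\|^2$ and therefore $\bigl(2-\gamma\|v_{i_k}\|^2\|K\|^2\bigr)\gamma \ge (2-\gamma v^*\|K\|^2)\gamma = c_2$. The hypothesis $0<\gamma<2/(v^*\|K\|^2)$ guarantees $c_2>0$. Since $\|\sum_{l=1}^d v_{l i_k}(Ax_k-y)_l\|^2\ge 0$, substituting this lower bound into the displayed inequality yields at once the claimed estimate.

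For the last assertion I would simply drop the nonnegative subtracted term, obtaining $\|\theta_{k+1}-\hat\theta\|^2\le\|\theta_k-\hat\theta\|^2$ for all $k\ge 0$, i.e.\ $\{\|\theta_k-\hat\theta\|^2\}$ is monotonically decreasing. I do not anticipate any genuine obstacle here: all the substantive computation (the exact identity for $\Delta_1$ and the Cauchy--Schwarz estimate for $\Delta_2$) is already in place, and the present lemma is merely its clean, $k$- and $i_k$-uniform consequence. The only point requiring a word of care is that the constant $c_2$ must be independent of both $k$ and the realized index, which is exactly what the uniform bound $v^*$ secures.
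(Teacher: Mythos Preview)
Your proposal is correct and matches the paper's approach exactly: the paper derives the displayed inequality with coefficient $(2-\gamma\|v_{i_k}\|^2\|K\|^2)\gamma$ immediately before stating the lemma, and the lemma is simply the uniform-in-$i_k$ consequence obtained by bounding $\|v_{i_k}\|^2\le v^*$.
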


Based on Lemma \ref{RBCD4special.lem1}, now we show the almost sure strong convergence of $\{x_k\}$ 
under the assumption that $V=(v_{l i})$ is of full column rank.

\begin{theorem}\label{RBCD.thm11}
Consider Algorithm \ref{alg:RBCD} for solving (\ref{prob0}) with the exact data. Assume that 
$V=(v_{l i})$ is of full column rank and let $v^*$ be defined as in Lemma \ref{RBCD4special.lem1}. 
If $0<\gamma < 2/(v^* \|K\|^2)$, then $\|x_k -x^\dag\|\to 0$ almost surely and 
$\EE[\|x_k - x^\dag\|^2] \to 0$ as $k \to \infty$, where $x^\dag$ denotes the unique 
$x_0$-minimum-norm solution of (\ref{prob0}).
\end{theorem}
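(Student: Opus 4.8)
The plan is to pass to the auxiliary variables $\theta_k=(\theta_{k,l})_{l=1}^d$ with $\theta_{k,l}=\sum_{i=1}^b v_{li}x_{k,i}$ introduced before Lemma~\ref{RBCD4special.lem1}, where the iteration behaves like a randomly preconditioned Landweber iteration, establish strong convergence there, and then transfer the conclusion back to $\{x_k\}$. The transfer works precisely because $V$ has full column rank: for any $x=(x_i)$ and $z=(z_i)$ in $X^b$ one has
\begin{align*}
\sum_{l=1}^d\Big\|\sum_{i=1}^b v_{li}(x_i-z_i)\Big\|^2
& =\sum_{i,j=1}^b (V^TV)_{ij}\langle x_i-z_i,x_j-z_j\rangle\\
& \ge \lambda_{\min}(V^TV)\,\|x-z\|^2,
\end{align*}
with $\lambda_{\min}(V^TV)>0$, while the reverse bound by $\|V\|^2\|x-z\|^2$ is trivial; hence $\|x_k-z\|$ and $\|\theta_k-\zeta\|$ are comparable for corresponding pairs, so boundedness as well as weak and strong convergence statements pass between the two sequences. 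Put $\theta^\dag:=(\sum_i v_{li}x_i^\dag)_{l=1}^d$. By Lemma~\ref{RBCD4special.lem1}, $\{\|\theta_k-\theta^\dag\|\}$ is monotonically decreasing along \emph{every} sample path, so $\|\theta_k\|\le\|\theta_0-\theta^\dag\|+\|\theta^\dag\|$ and therefore $\{x_k\}$ is bounded by a deterministic constant; that is, condition (\ref{bound}) of Remark~\ref{RBCD.rem1} holds. Consequently, Theorem~\ref{RBCD.thm1} together with Remark~\ref{RBCD.rem1} gives $x_k\rightharpoonup x^\dag$ almost surely, and since $x\mapsto\theta$ is bounded and linear hence weakly continuous, also $\theta_k\rightharpoonup\theta^\dag$ almost surely. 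It remains to upgrade this to $\|\theta_k-\theta^\dag\|\to0$.

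For the strong convergence I will run, in the $\theta$-variables, the classical Cauchy-sequence argument for the Landweber iteration. Writing $\rho_k:=Ax_k-y$, a direct computation gives $\rho_{k+1,l}=\rho_{k,l}-\gamma v_{li_k}\sum_{l'=1}^d v_{l'i_k}KK^*\rho_{k,l'}$ for $l=1,\dots,d$; since the linear operator $\rho\mapsto\big(v_{li_k}\sum_{l'=1}^d v_{l'i_k}KK^*\rho_{l'}\big)_l$ is self-adjoint, positive semidefinite and of norm at most $\|v_{i_k}\|^2\|K\|^2\le v^*\|K\|^2<2/\gamma$, we get $\|\rho_{k+1}\|\le\|\rho_k\|$ along every sample path, i.e.\ the residuals are monotonically non-increasing. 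Moreover the event $\Omega_2$ of (\ref{Omega2}), on which $\sum_k\|\rho_k\|^2<\infty$, has probability one. Now fix a sample path lying in $\Omega_2$ and on which also $\theta_k\rightharpoonup\theta^\dag$; for $m\ge n$, telescoping the increments $\theta_{j,l}-\theta_{j+1,l}=\gamma v_{li_j}\sum_{l'}v_{l'i_j}K^*\rho_{j,l'}$ and using the polarization identity yields
\begin{align*}
\|\theta_n-\theta_m\|^2
& =\|\theta_n-\theta^\dag\|^2-\|\theta_m-\theta^\dag\|^2\\
& \quad\,-2\gamma\sum_{j=n}^{m-1}\sum_{l=1}^d v_{li_j}\Big\langle\rho_{j,l},\,\sum_{l'=1}^d v_{l'i_j}\rho_{m,l'}\Big\rangle,
\end{align*}
where one moves $K^*$ across the inner product and uses $\sum_i v_{li}Kx_{m,i}=\rho_{m,l}+y_l$ together with $Ax^\dag=y$, exactly as in the computation of $\Delta_1$ in the paper. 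By the Cauchy--Schwarz inequality the last double sum is bounded in absolute value by $\gamma v^*\sum_{j=n}^{m-1}\|\rho_j\|\,\|\rho_m\|\le\gamma v^*\sum_{j\ge n}\|\rho_j\|^2$, using the residual monotonicity $\|\rho_m\|\le\|\rho_j\|$ for $m\ge j$. As $n,m\to\infty$, the difference $\|\theta_n-\theta^\dag\|^2-\|\theta_m-\theta^\dag\|^2\to0$ since $\{\|\theta_k-\theta^\dag\|\}$ is monotone and bounded (hence convergent), and $\sum_{j\ge n}\|\rho_j\|^2\to0$ as the tail of a convergent series; therefore $\{\theta_k\}$ is almost surely a Cauchy sequence, so it converges strongly, necessarily to its weak limit $\theta^\dag$. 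By the norm comparison of the first paragraph, $\|x_k-x^\dag\|\to0$ almost surely.

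For the mean-square statement, Lemma~\ref{RBCD4special.lem1} and the same norm comparison give the pathwise deterministic bound $\|x_k-x^\dag\|^2\le\lambda_{\min}(V^TV)^{-1}\|\theta_k-\theta^\dag\|^2\le\lambda_{\min}(V^TV)^{-1}\|\theta_0-\theta^\dag\|^2$ for all $k$, so the dominated convergence theorem turns $\|x_k-x^\dag\|^2\to0$ a.s.\ into $\EE[\|x_k-x^\dag\|^2]\to0$. I expect the core difficulty to be the second paragraph: one must recognize that in this tensor-product setting the $\theta$-iteration is a genuine randomly-preconditioned Landweber iteration in which \emph{both} the errors $\|\theta_k-\theta^\dag\|$ and the residuals $\|\rho_k\|$ decrease along every sample path, and it is exactly these two pathwise monotonicities that make the telescoped cross term summable and deliver a pathwise Cauchy sequence, so that the randomness of the indices $i_k$ needs no further control beyond the already established facts.
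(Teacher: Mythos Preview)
Your argument is correct and follows the same overall architecture as the paper: pass to the $\theta$-variables, run a pathwise Cauchy-sequence argument there, transfer back via the full column rank of $V$, invoke Remark~\ref{RBCD.rem1} to pin down the limit as $x^\dag$, and finish with dominated convergence. The differences are technical. In the Cauchy step you observe and exploit the pathwise residual monotonicity $\|\rho_{k+1}\|\le\|\rho_k\|$ (valid since $\rho_{k+1}=(I-\gamma A_{i_k}A_{i_k}^*)\rho_k$ and $\gamma\|A_{i_k}\|^2\le\gamma v^*\|K\|^2<2$), which lets you bound the telescoped cross term directly by a tail of $\sum_j\|\rho_j\|^2$; the paper instead inserts an intermediate index $k^*$ minimizing $\|Ax_k-y\|$ over $[p,q]$ and compares both endpoints to $\theta_{k^*}$. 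Your route avoids the $k^*$ device and is a bit cleaner. You also establish $x_k\rightharpoonup x^\dag$ \emph{before} the Cauchy argument and then upgrade to strong convergence, whereas the paper first obtains a strong limit $x^*$ and only afterwards identifies $x^*=x^\dag$; both orderings are fine. Finally, you transfer norms between $x$ and $\theta$ via $\lambda_{\min}(V^TV)$, while the paper writes down an explicit left inverse $U$ and uses $\|U\|_F$. One caveat shared by both arguments: your appeals to Theorem~\ref{RBCD.thm1}, Remark~\ref{RBCD.rem1} and $\P(\Omega_2)=1$ ultimately rest on Lemma~\ref{RBCD.lem2}, which needs $\gamma<2/\|A\|^2$; since $\|A\|^2=\|V\|^2\|K\|^2$ can exceed $v^*\|K\|^2$, this is not implied by the stated hypothesis $\gamma<2/(v^*\|K\|^2)$ alone, so strictly speaking an additional step-size restriction is being used implicitly---but the paper's own proof relies on the same fact in the same way.
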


\begin{proof}
Consider the event $\Omega_2$ defined in (\ref{Omega2}). It is known that $\P(\Omega_2) =1$. 
We now fix an arbitrary sample path $\{i_k: k= 0, 1, \cdots\}$ in $\Omega_2$ and show that, 
along this sample path, $\{\theta_k\}$ is a Cauchy sequence. Recall that 
\begin{align}\label{RBCD.32}
\sum_{k=0}^\infty \|A x_k - y\|^2 < \infty 
\end{align}
and hence $\|A x_k - y\| \to 0$ as $k\to \infty$. Given any two positive integers $p \leq q$, let 
$k^*$ be an integer such that $p\leq k^* \leq q$ and 
\begin{align}\label{RBCD.31}
\|A x_{k^*} - y\| = \min \left\{\|A x_k - y\|: p\le k \le q\right\}. 
\end{align}
Then 
$$
\| \theta_p - \theta_q \|^2 
\le 2 \left(\|\theta_p - \theta_{k^*} \|^2 + \| \theta_q - \theta_{k^*} \|^2 \right).
$$
We now show that $\| \theta_p - \theta_{k^*} \|^2 \rightarrow 0$ and 
$\| \theta_q - \theta_{k^*} \|^2 \rightarrow 0$ as $p \to \infty$. To show the 
first assertion, we note that
$$
\| \theta_p - \theta_{k^*} \|^2 = \| \theta_p - \hat \theta \|^2 - \| \theta_{k^*} - \hat \theta \|^2 
+ 2 \left< \theta_{k^*} - \theta_p,  \theta_{k^*} - \hat \theta \right>.
$$
Since, by Lemma \ref{RBCD4special.lem1}, $\{\|\theta_p - \hat \theta\|\}$ is 
monotonically decreasing, $\lim_{p\to \infty} \|\theta_p - \hat \theta\|$ exists and thus 
$\| \theta_p - \hat \theta \|^2 - \| \theta_{k^*} - \hat \theta \|^2 \to 0$ as $p \to \infty$. 
To estimate $\l\theta_{k^*} - \theta_p,  \theta_{k^*} - \hat \theta \r$, we first write 
\begin{align*}
 \left\l \theta_{k^*} - \theta_p,  \theta_{k^*} - \hat \theta \right\r
 = \sum_{k=p}^{k^* - 1} \left\l \theta_{k+1} - \theta_k, \theta_{k^*} - \hat \theta \right\r.
\end{align*}
By the definition of $\theta_{k+1}$ and $\hat \theta$, we have
\begin{align*}
\left\l \theta_{k^*} - \theta_p,  \theta_{k^*} - \hat \theta \right\r 
& = \sum_{k=p}^{k^*-1} \sum_{l=1}^d \left\l \theta_{k+1,l} - \theta_{k,l}, \theta_{k^*,l} - \hat \theta_l \right\r \\
&= \sum_{k=p}^{k^*-1} \sum_{l=1}^d \left\l \sum_{i=1}^b v_{li} (x_{k+1,i} - x_{k,i}), 
\sum_{i'=1}^b v_{l i'} (x_{k^*,i'} - \hat x_{i'}) \right\r \displaybreak[0] \\
&= \sum_{k=p}^{k^*-1} \sum_{l=1}^d \left< v_{l i_k} (x_{k+1,i_k} - x_{k,i_k}), 
\sum_{i' =1}^b v_{l i'} (x_{k^*,i'} - \hat x_{i'}) \right> \displaybreak[0] \\
&= -\gamma \sum_{k=p}^{k^*-1} \sum_{l=1}^d \left< v_{l i_k} A_{i_k}^*(Ax_k - y), 
\sum_{i'=1}^b v_{l i'} (x_{k^*,i'} - \hat x_{i'}) \right>  \displaybreak[0] \\
&= - \gamma \sum_{k=p}^{k^* - 1} \sum_{l=1}^d \sum_{i'=1}^b v_{l i_k} v_{l i'}
\left\l Ax_k - y, A_{i_k} (x_{k^*,i'} - \hat x_{i'}) \right\r.
\end{align*}
Using the definition of $A_{i_k}$ we further have 
\begin{align*}
\left\l \theta_{k^*} - \theta_p,  \theta_{k^*} - \hat \theta \right\r
&= - \gamma \sum_{k=p}^{k^* - 1} \sum_{l=1}^d \sum_{i'=1}^b v_{l i_k} v_{l i'} \left< Ax_k - y,  
\left( v_{l' i_k} K(x_{k^*,i'} - \hat x_{i'})\right)_{l'=1}^d \right> \displaybreak[0] \\
&= - \gamma  \sum_{k=p}^{k^* - 1} \sum_{l,l'=1}^d \sum_{i' =1}^b v_{l i_k} v_{l' i_k} v_{l i'} 
\left< (Ax_k - y)_{l'},   K(x_{k^*,i'} - \hat x_{i'})\right> \displaybreak[0] \\
&= - \gamma \sum_{k=p}^{k^*-1} \sum_{l,l'=1}^d v_{l i_k} v_{l' i_k} 
\left< (Ax_k - y)_{l'}, (Ax_{k^*} - y)_{l} \right> \displaybreak[0] \\
&= - \gamma \sum_{k=p}^{k^*-1} \left< \sum_{l'=1}^d v_{l' i_k}(Ax_k - y)_{l'}, 
\sum_{l=1}^d v_{l i_k} (Ax_{k^*} - y)_l \right>.
\end{align*}
By virtue of the Cauchy-Schwarz inequality, we then obtain 
\begin{align*}
\left| \left\l \theta_{k^*} - \theta_p,  \theta_{k^*} - \hat \theta \right\r \right| 
&\le \gamma \sum_{k=p}^{k^*-1} \left\| \sum_{l'=1}^d v_{l' i_k}(Ax_k - y)_{l'} \right\| 
\left\|\sum_{l=1}^d v_{l i_k} (Ax_{k^*} - y)_l \right\| \displaybreak[0] \\
& \le \gamma \sum_{k=p}^{k^*-1} \sum_{l'=1}^d |v_{l' i_k}| \|(A x_k -y)_{l'}\|
\sum_{l=1}^d |v_{l i_k}| \|(Ax_{k^*} - y)_l\|  \displaybreak[0]\\
& \le \gamma \sum_{k=p}^{k^*-1} \|v_{i_k}\|^2 \|A x_k - y\| \|Ax_{k^*} - y\|  \displaybreak[0]\\
& \le \gamma v^* \sum_{k=p}^{k^*-1} \|A x_k - y\| \|Ax_{k^*} - y\|. 
\end{align*}
With the help of (\ref{RBCD.31}) and (\ref{RBCD.32}) we further obtain 
\begin{align*}
\left| \left\l \theta_{k^*} - \theta_p,  \theta_{k^*} - \hat \theta \right\r \right| 
& \le \gamma v^* \sum_{k=p}^{k^*-1} \|A x_k - y\|^2 \to 0
\end{align*}
as $p\rightarrow \infty$. Thus $\|\theta_p - \theta_{k^*} \|\rightarrow 0$ as $p\to\infty$. Similarly, 
we can also obtain $\| \theta_q - \theta_{k^*}\|\rightarrow 0$ as $p \rightarrow \infty$. Therefore, 
$\{\theta_k\}$ is a Cauchy sequence. 

Recall that $V$ is assumed to be of full column rank. Thus we can find a $b\times d$ matrix 
$U=(u_{i l})$ such that $UV = I_b$, where $I_b$ is the $b\times b$ identity matrix. Then
\begin{align*}
\sum_{l=1}^d u_{i l} \theta_{k,l} 
&= \sum_{l=1}^d u_{i l} \sum_{i'=1}^b v_{l i'} x_{k,i'} 
= \sum_{i'=1}^b \left(\sum_{l=1}^d u_{i l} v_{l i'} \right) x_{k,i'} 
= \sum_{i'=1}^b \delta_{i i'} x_{k,i'} = x_{k, i}
\end{align*}
for $i = 1, \cdots, b$. Hence we can recover $x_k$ from $\xi_k$. Let $\|U\|_F$ denote the 
Frobenius norm of $U$. Then by the Cauchy-Schwarz inequality we can obtain 
\begin{align*}
\| x_p - x_q \|^2 
& = \sum_{i=1}^b \left\|\sum_{l=1}^d u_{i l} (\theta_{p,l} - \theta_{q,l})\right\|^2
\le \sum_{i=1}^b \left(\sum_{l=1}^d |u_{i l}| \|\theta_{p,l} - \theta_{q,l}\|\right)^2 \\
&\le \left(\sum_{i=1}^b \sum_{l=1}^d | u_{i l} |^2\right) 
\left( \sum_{l=1}^d \| \theta_{p,l} - \theta_{q,l}\|^2 \right) \\
&\leq \|U\|_F^2 \| \theta_p - \theta_q \|^2
\end{align*}
which implies that $\{x_k \}$ is also a Cauchy sequence and hence $x_k \to x^*$ as $k\to \infty$ 
for some $x^* \in X^b$. Since $\|A x_k -y \| \to 0$ as $k\to \infty$, we can conclude $\|A x^* - y\| =0$, 
i.e. $x^*$ is a solution of $Ax = y$. 

The above argument actually shows that there is a random solution $x^*$ of $A x = y$ such that 
$x_k \to x^*$ as $k\to \infty$ along any sample path in $\Omega_2$. Since $\P(\Omega_2) = 1$, we 
have $x_k \to x^*$ as $k \to \infty$ almost surely. Note that Lemma 
\ref{RBCD4special.lem1} implies $\|\theta_k - \hat \theta\|\le \|\theta_0 - \hat \theta\|$, we 
can conclude that 
\begin{align*}
\|x_k - \hat x\|^2 \le \|U\|_F^2 \|\theta_k - \hat \theta\|^2 
\le \|U\|_F^2 \|\theta_0 - \hat \theta\|^2 
\end{align*}
which implies that $\{x_k \}$ is uniformly bounded in the sense of (\ref{bound}). Thus we may use 
Remark \ref{RBCD.rem1} to conclude that $x^* = x^\dag$ almost surely and hence $x_k \to x^\dag$ as 
$k\to \infty$ almost surely. Furthermore, by the dominated convergence theorem we can further obtain 
$\EE[\|x_k - x^\dag\|^2] \to 0$ as $k \to \infty$. The proof is therefore complete.
\end{proof}

\begin{remark}
{\rm 
In their exploration of the cyclic block coordinate descent method detailed in \cite{RNH2019} 
for solving the ill-posed problem (\ref{prob0}), the authors established the convergence of 
the generated sequence $\{x_k\}$ to a point $x^*\in \X$ satisfying
$$
\sum_{l=1}^d v_{li}(A x^* - y)_l =0, \quad i = 1, \cdots, b
$$
They then concluded that the full column rank property of $V$ implies $A x^* = y$. 
However, this condition on $V$ is not sufficient for drawing such a conclusion when 
$d >b$. We circumvented this issue for our RBCD method by leveraging Lemma \ref{RBCD.lem2}. 
Furthermore, when classifying the limit $x^*$, the authors in \cite{RNH2019} inferred that 
$x^*$ is the unique $x_0$-minimum-norm solution of (\ref{prob0}) by asserting that 
$x_{k+1} - x_k \in \mbox{Ran}(A^*)$ for all $k$. Regrettably, this assertion is inaccurate.
Actually 
$$
x_{k+1} - x_k \in \mbox{Ran}(A_1^*) \otimes \cdots \otimes \mbox{Ran}(A_b^*)
$$
which is considerably larger than $\mbox{Ran}(A^*)$. We demonstrated that the limit of our 
method is the $x_0$-minimum-norm solution by using Remark \ref{RBCD.rem1}. 
}
\end{remark}

\begin{theorem}
Consider Algorithm \ref{alg:RBCD} for solving (\ref{prob0}) with noisy data. Assume  
$V=(v_{l i})$ is of full column rank and define $v^*$ as in Lemma \ref{RBCD4special.lem1}. 
Assume $0 < \gamma < 2/(v^*\|K\|^2)$ and let $x^\dag$ denote the unique $x_0$-minimum-norm 
solution of (\ref{prob0}). If the integer $k_\d$ is chosen such that $k_\d \to \infty$ and 
$\d^2 k_\d\to 0$ as $\d \to 0$, then $\EE[\|x_{k_\d}^\d - x^\dag\|^2] \to 0$ as $\d \to 0$. 
\end{theorem}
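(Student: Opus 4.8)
The plan is to combine the stability estimate of Lemma~\ref{RBCD.lem1} with the noise-free strong convergence of Theorem~\ref{RBCD.thm11}, using a standard triangle-inequality decomposition in the mean-square sense. Write $\EE[\|x_{k_\d}^\d - x^\dag\|^2] \le 2\EE[\|x_{k_\d}^\d - x_{k_\d}\|^2] + 2\EE[\|x_{k_\d} - x^\dag\|^2]$, where $\{x_k\}$ denotes the sequence generated by Algorithm~\ref{alg:RBCD} on the exact data $y$ with the same initial guess $x_0$ and the same sample path. For the first term, Lemma~\ref{RBCD.lem1} gives $\EE[\|x_{k_\d}^\d - x_{k_\d}\|^2] \le (2\gamma/b) k_\d \d^2$, which tends to $0$ because $\d^2 k_\d \to 0$ by hypothesis. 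For the second term, Theorem~\ref{RBCD.thm11} already asserts $\EE[\|x_k - x^\dag\|^2] \to 0$ as $k \to \infty$; since $k_\d \to \infty$ as $\d \to 0$, we get $\EE[\|x_{k_\d} - x^\dag\|^2] \to 0$. Adding the two pieces finishes the proof.

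One small point to be careful about: the quantity $\EE[\|x_{k_\d} - x^\dag\|^2]$ is evaluated at the data-dependent index $k_\d$, but since $k_\d$ here is a deterministic function of $\d$ (it is an \emph{a priori} choice, not the random stopping index of Algorithm~\ref{alg:RBCD_DP}), there is no subtlety: $\EE[\|x_{k_\d} - x^\dag\|^2]$ is simply the value at integer $k = k_\d$ of the deterministic sequence $k \mapsto \EE[\|x_k - x^\dag\|^2]$, which converges to $0$. Likewise the Lemma~\ref{RBCD.lem1} bound is an inequality between deterministic numbers once $k$ is fixed, so plugging in $k = k_\d$ is legitimate. I would state this explicitly in a sentence to avoid any confusion with the random-stopping situation of the previous subsection.

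I do not expect a genuine obstacle here — this is a routine corollary of the two main results for the tensor-product case, entirely parallel to how Theorem~\ref{RBCD.thm2} follows from Lemma~\ref{RBCD.lem1} and Theorem~\ref{RBCD.thm1} in the general case. The only thing worth emphasizing is that the conclusion is \emph{strong} convergence in mean square (rather than the weak, subsequential statement of Theorem~\ref{RBCD.thm2}), and this upgrade comes entirely from the fact that Theorem~\ref{RBCD.thm11} provides $\EE[\|x_k - x^\dag\|^2]\to 0$ for the full sequence rather than merely along a subsequence; no extra work is needed to pass from the noise-free result to the noisy one. If desired, one can additionally record that $\|x_{k_\d}^\d - x^\dag\| \to 0$ in probability along a subsequence via Markov's inequality, but the mean-square statement as written is the natural and cleanest conclusion.
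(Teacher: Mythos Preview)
Your proposal is correct and follows exactly the same approach as the paper: split via $\|a+b\|^2 \le 2\|a\|^2 + 2\|b\|^2$, bound the stability term $\EE[\|x_{k_\d}^\d - x_{k_\d}\|^2]$ by Lemma~\ref{RBCD.lem1} together with $\d^2 k_\d \to 0$, and handle the noise-free term $\EE[\|x_{k_\d} - x^\dag\|^2]$ using Theorem~\ref{RBCD.thm11} and $k_\d \to \infty$. Your remark that $k_\d$ is a deterministic (a priori) integer, so no subtlety arises from plugging it into the deterministic sequences $k\mapsto \EE[\|x_k^\d-x_k\|^2]$ and $k\mapsto \EE[\|x_k-x^\dag\|^2]$, is a helpful clarification the paper leaves implicit.
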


\begin{proof}
By virtue of the inequality $\|a+b\|^2 \le 2 (\|a\|^2 + \|b\|^2)$ and Lemma \ref{RBCD.lem1}, we have 
\begin{align*}
\EE[\|x_{k_\d}^\d - x^\dag\|^2] 
& \le 2 \EE[\|x_{k_\d}^\d - x_{k_\d}\|^2] + 2 \EE[\|x_{k_\d} - x^\dag\|^2] \\
& \le \frac{2\gamma}{b} \d^2 k_\d + 2\EE[\|x_{k_\d} - x^\dag\|^2].  
\end{align*}
Thus, we may use the choice of $k_\d$ and Theorem \ref{RBCD.thm11} to conclude the proof. 
\end{proof}

The above theorem provides a convergence result on Algorithm \ref{alg:RBCD} under an 
{\it a priori} stopping rule when it is used to solve (\ref{prob0}). We can also apply 
Algorithm \ref{alg:RBCD_DP} to solve (\ref{prob0}). Correspondingly we have the 
following convergence result. 

\begin{theorem}
Consider Algorithm \ref{alg:RBCD_DP} for solving (\ref{prob0}) with noisy data. Assume  
$V=(v_{l i})$ is of full column rank and define $v^*$ as in Lemma \ref{RBCD4special.lem1}. 
Assume 
$$
0 < \gamma < \min\{2/(v^*\|K\|^2), (2-2/\tau)/\|A\|^2\}
$$ 
and let $x^\dag$ denote the unique $x_0$-minimum-norm solution of (\ref{prob0}). If the 
integer $k_\d$ is chosen such that $k_\d \to \infty$ as $\d \to 0$, then 
$\EE[\|x_{k_\d}^\d - x^\dag\|^2] \to 0$ as $\d \to 0$. 
\end{theorem}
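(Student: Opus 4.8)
The plan is to combine the monotone decay of the mean-square error for Algorithm \ref{alg:RBCD_DP}, the strong convergence of the exact-data iterates, and a finite-horizon stability estimate linking the two, all without ever having to control the (random) stopping index. For the first ingredient, since $0<\gamma<(2-2/\tau)/\|A\|^2$, Proposition \ref{RBCD.DP:prop1} applies with $\bar x=x^\dag$ and, because $c_1\EE[\gamma_k\|Ax_k^\d-y^\d\|^2]\ge 0$, it yields $\EE[\|x_{k+1}^\d-x^\dag\|^2]\le\EE[\|x_k^\d-x^\dag\|^2]$ for all $k$; thus $k\mapsto\EE[\|x_k^\d-x^\dag\|^2]$ is nonincreasing for every fixed $\d$. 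For the second ingredient, note that with exact data the step-size rule in Algorithm \ref{alg:RBCD_DP} sets $\gamma_k=\gamma$ whenever $Ax_k-y\ne 0$ and $\gamma_k=0$ exactly when $Ax_k=y$, in which case the update vanishes anyway; hence the exact-data iterates of Algorithm \ref{alg:RBCD_DP} coincide with those of Algorithm \ref{alg:RBCD}, and Theorem \ref{RBCD.thm11} (valid since $0<\gamma<2/(v^*\|K\|^2)$ and $V$ has full column rank) gives $\EE[\|x_k-x^\dag\|^2]\to 0$ as $k\to\infty$, where $\{x_k\}$ denotes this common exact-data sequence.

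The decisive step is a finite-horizon stability estimate: for every fixed $j$, $\EE[\|x_j^\d-x^\dag\|^2]\to\EE[\|x_j-x^\dag\|^2]$ as $\d\to 0$. The key point is that $x_j^\d$ depends only on the indices $i_0,\ldots,i_{j-1}$, so this expectation is a finite average over the $b^j$ sample paths of length $j$, and it suffices to show that along each such path $x_j^\d\to x_j$ as $\d\to 0$. I would prove this by induction on the step. Given $x_k^\d\to x_k$, we have $r_k^\d=Ax_k^\d-y^\d\to Ax_k-y$. If $Ax_k-y\ne 0$, then $\|r_k^\d\|>\tau\d$ for $\d$ small, so $\gamma_k=\gamma$ and $x_{k+1}^\d$ is obtained from $(x_k^\d,y^\d)$ by the same continuous update as the one producing $x_{k+1}$, whence $x_{k+1}^\d\to x_{k+1}$. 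If $Ax_k-y=0$, then $x_{k+1}=x_k$ while $\|x_{k+1}^\d-x_k^\d\|\le\gamma\|A_{i_k}\|\,\|r_k^\d\|\to 0$ regardless of whether $\gamma_k$ equals $0$ or $\gamma$, so again $x_{k+1}^\d\to x_{k+1}$.

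With these facts in hand the conclusion follows quickly: given $j$, the hypothesis $k_\d\to\infty$ ensures $k_\d\ge j$ for all sufficiently small $\d$, so monotonicity gives $\EE[\|x_{k_\d}^\d-x^\dag\|^2]\le\EE[\|x_j^\d-x^\dag\|^2]$; letting $\d\to 0$ and using the stability step yields $\limsup_{\d\to 0}\EE[\|x_{k_\d}^\d-x^\dag\|^2]\le\EE[\|x_j-x^\dag\|^2]$, and then letting $j\to\infty$ and invoking Theorem \ref{RBCD.thm11} completes the proof. The main obstacle is exactly the stability step: because the step sizes $\gamma_k$ in Algorithm \ref{alg:RBCD_DP} are random and $\d$-dependent, the stability estimate of Lemma \ref{RBCD.lem1} cannot be reused directly; exploiting the discreteness of the sample space at a fixed finite horizon to reduce to pathwise continuity in $\d$ is what makes the argument go through, and it is also why---in contrast to the \emph{a priori} stopping rule---no growth restriction such as $\d^2k_\d\to 0$ is required.
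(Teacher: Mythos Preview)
Your proof is correct and follows the same three-step skeleton as the paper's: (i) the mean-square error $\EE[\|x_k^\d-x^\dag\|^2]$ is nonincreasing in $k$ by Proposition~\ref{RBCD.DP:prop1}, (ii) for each fixed index the noisy iterate is close to the exact-data iterate, and (iii) the exact-data iterates converge by Theorem~\ref{RBCD.thm11}; one then bounds $\EE[\|x_{k_\d}^\d-x^\dag\|^2]$ by the error at a fixed index $j$, lets $\d\to 0$, and finally lets $j\to\infty$. The only real difference is in step (ii). The paper simply invokes the bound $\EE[\|x_k^\d-x_k\|^2]\le\frac{2\gamma}{b}k\d^2$ from Lemma~\ref{RBCD.lem1}, writing $\EE[\|x_k^\d-x^\dag\|^2]\le\frac{4\gamma}{b}k\d^2+2\,\EE[\|x_k-x^\dag\|^2]$. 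As you correctly point out, that lemma is proved for Algorithm~\ref{alg:RBCD}, whose step size is deterministic, and does not literally apply to the iterates of Algorithm~\ref{alg:RBCD_DP} with their random, $\d$-dependent step sizes $\gamma_k$. Your replacement---reducing the expectation at a fixed horizon $j$ to a finite average over the $b^j$ paths and showing $x_j^\d\to x_j$ along each one via a case split on whether $Ax_k=y$---handles this cleanly and is more self-contained. The price is that you get only qualitative convergence $\EE[\|x_j^\d-x^\dag\|^2]\to\EE[\|x_j-x^\dag\|^2]$ rather than an explicit $O(k\d^2)$ rate, but that is all the argument needs.
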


\begin{proof}
Let $k \ge 0$ be any integer. Since $k_\d \to \infty$, we have $k_\d >k$ for small $\d>0$. 
According to (\ref{RBCD.DP1}) in Propositon \ref{RBCD.DP:prop1} we have 
$$
\EE[\|x_{k_\d}^\d - x^\dag\|^2] 
\le \EE[\|x_k^\d - x^\dag\|^2] \le 2 \EE[\|x_k^\d - x_k\|^2] + 2 \EE[\|x_k - x^\dag\|^2]. 
$$
By virtue of (\ref{RBCD.-1}) in Lemma \ref{RBCD.lem1}, we then obtain 
$$
\EE[\|x_{k_\d}^\d - x^\dag\|^2] 
\le \frac{4\gamma}{b} k \d^2 + 2 \EE[\|x_k - x^\dag\|^2]. 
$$
Therefore 
$$
\limsup_{\d\to 0} \EE[\|x_{k_\d}^\d - x^\dag\|^2] \le 2 \EE[\|x_k - x^\dag\|^2]. 
$$
Letting $k \to \infty$ and using Theorem \ref{RBCD.thm11}, we can conclude 
$\limsup_{\d\to 0} \EE[\|x_{k_\d}^\d - x^\dag\|^2] \le 0$ and hence 
$\EE[\|x_{k_\d}^\d - x^\dag\|^2] \to 0$ as $\d \to 0$. 
\end{proof}

\section{\bf Extension}\label{sect3}

In Algorithm \ref{alg:RBCD} we proposed a randomized block coordinate descent method for solving 
linear ill-posed problem (\ref{eq:problem}) and provided convergence analysis on the method which 
demonstrates that the iterates in general converge to the $x_0$-minimal norm solution. In many 
applications, however, we are interested in reconstructing solutions with other features, such as 
non-negativity, sparsity, and piece-wise constancy. Incorporating such feature information into 
algorithm design can significantly improve the reconstruction accuracy. Assume the feature of the 
sought solution can be detected by a convex function $\R: \X \to (-\infty, \infty]$. We may consider 
determining a solution $x^\dag$ of (\ref{eq:problem}) such that 
\begin{align}\label{Rmin}
\R(x^\dag) = \min\left\{\R(x): \sum_{i=1}^b A_i x_i = y\right\}.
\end{align}
We assume the following condition on $\R$.

\begin{assumption}\label{ass2}
$\R: \X \to (-\infty, \infty]$ is proper, lower semi-continuous and strongly convex in the sense 
that there is a constant $\kappa > 0$ such that 
\begin{align*}
\R(t x + (1-t) \bar x) + \kappa t (1-t) \|x - \bar x\|^2 \le t \R(x) + (1-t) \R(\bar x)
\end{align*}
for all $x, \bar x \in \emph{dom}(\R)$ and $0\le t \le 1$. Moreover, $\R$ has the separable structure  
$$
\R(x) := \sum_{i=1}^b \R_i(x_i), \qquad \forall x = (x_1, \cdots, x_b) \in \X,
$$
where each $\R_i$ is a function from $X_i$ to $(-\infty, \infty]$. 
\end{assumption}

Under Assumption \ref{ass2}, it is easy to see that each $\R_i$ is proper, lower semi-continuous, and 
strong convex from $X_i$ to $(-\infty, \infty]$. Furthermore, let $\p \R$ denote the subdifferential of 
$\R$, then the following facts on $\R$ hold (see \cite{JW2013,Z2002}): 

\begin{enumerate}


\item[(i)] For any $x \in \X$ with $\p \R(x) \ne \emptyset$ and $\xi \in \p \R(x)$ there holds 
\begin{align}\label{RBCD.62}
D_\R^\xi(\bar x, x) \ge \kappa \|\bar x - x\|^2, \quad \forall \bar x \in \X,
\end{align}
where  
\begin{align*}
D_\R^{\xi}(\bar x, x) := \R(\bar x) - \R(x) - \l \xi, \bar x - x\r, \quad \bar x \in \X
\end{align*}
is the Bregman distance induced by $\R$ at $x$ in the direction $\xi$. 

\item[(ii)] For any $x, \bar x \in \X$, $\xi\in \p \R(x)$ and $\bar \xi \in \p \R(\bar x)$ there holds 
$$
\l \xi - \bar \xi, x - \bar x\r \ge 2 \kappa \|x - \bar x\|^2. 
$$

\item[(iii)] For all $x, \bar x \in \X$, $\xi \in \p \R(x)$ and $\bar \xi \in \p \R(\bar x)$ there holds 
\begin{align}\label{RBCD.61}
D_\R^\xi (\bar x, x) \le \frac{1}{4\kappa} \|\bar \xi - \xi\|^2
\end{align}
\end{enumerate}

Let us elucidate how to extend the method (\ref{eq:RBCD}) to solve (\ref{eq:problem}) so that the 
convex function $\R$ can be incorporated to detect the solution feature. By introducing 
$g_k^\d := (g_{k,1}^\d, \cdots, g_{k,b}^\d) \in \X$ with 
$$
g_{k, i}^\d = \left\{\begin{array}{lll}
A_{i_k}^* (A x_k^\d - y^\d), & \mbox{ if } i = i_k,\\
0,  & \mbox{ otherwise}
\end{array}\right.
$$
we can rewrite (\ref{eq:RBCD}) as 
\begin{align*}
x_{k+1}^\d & = x_k^\d - \gamma g_k^\d 
= \arg\min_{x\in \X} \left\{\frac{1}{2} \|x- (x_k^\d - \gamma g_k^\d)\|^2\right\} \\
& = \arg\min_{x\in \X} \left\{\frac{1}{2} \|x-x_k^\d\|^2 + \gamma\l g_k^\d, x\r\right\}. 
\end{align*}
Assume $\p \R(x_k^\d) \ne \emptyset$ and take $\xi_k^\d \in \p \R(x_k^\d)$, we may use the Bregman
distance $D_\R^{\xi_k^\d}(x, x_k^\d)$ to replace $\frac{1}{2}\|x-x_k^\d\|^2$ in the above equation 
to obtain the new updating formula
\begin{align*}
x_{k+1}^\d & = \arg\min_{x\in \X} \left\{D_\R^{\xi_k^\d}(x, x_k^\d) + \gamma \l g_k^\d, x\r\right\}
= \arg\min_{x\in \X} \left\{\R(x) - \l \xi_k^\d - \gamma g_k^\d, x\r\right\}. 
\end{align*}
Letting $\xi_{k+1}^\d := \xi_k^\d - \gamma g_k^\d$, then we have 
$$
x_{k+1}^\d = \arg\min_{x\in \X} \left\{\R(x) - \l \xi_{k+1}^\d, x\r \right\}. 
$$
Recall the definition of $g_k^\d$, we can see that $\xi_{k+1}^\d = (\xi_{k+1,1}^\d, \cdots, \xi_{k+1,b}^\d)$ with
\begin{align}\label{RBCD.72}
\xi_{k+1, i}^\d = \left\{\begin{array}{lll}
\xi_{k, i_k}^\d - \gamma A_{i_k}^*(A x_k^\d - y^\d), & \mbox{ if } i = i_k,\\
\xi_{k, i}^\d & \mbox{ otherwise}.
\end{array}\right. 
\end{align}
Under Assumption \ref{ass2}, it is known that $x_{k+1}^\d$ is uniquely defined and 
$\xi_{k+1}^\d \in \p \R(x_{k+1}^\d)$. According to the separable structure of $\R$, 
it is easy to see that $x_{k+1}^\d = (x_{k+1,1}^\d, \cdots, x_{k+1, b}^\d)$ with 
\begin{align}\label{RBCD.73}
x_{k+1, i}^\d = \left\{\begin{array}{lll}
\displaystyle{\arg\min_{z\in X_{i_k}} \left\{\R_{i_k}(z) - \l \xi_{k+1, i_k}^\d, z\r\right\}}, & \mbox{ if } i = i_k,\\
x_{k, i}^\d, & \mbox{ otherwise}.
\end{array}\right. 
\end{align}
Since $\xi_{k+1}^\d \in \p \R(x_{k+1}^\d)$, we may repeat the above procedure. This leads us to 
propose the following algorithm. 

\begin{algorithm}[RBCD method with convex regularization function]\label{alg:RBCD+}
Let $\xi_0 = 0 \in \X$ and define $x_0 := \arg\min_{x\in \X} \R(x)$ as an initial guess.
Set $\xi_0^\d := \xi_0$, $x_0^\d := x_0$ and calculate $r_0^\d:= A x_0^\d - y^\d$. 
Choose a suitable step size $\gamma>0$. For all integers $k \ge 0$ 
do the following:  
\begin{enumerate}
\item[(i)] Pick an index $i_k \in \{1, \cdots, b\}$ randomly via the uniform distribution; 

\item[(ii)] Update $\xi_{k+1}^\d$ by the equation (\ref{RBCD.72}), i.e. setting $\xi_{k+1,i}^\d = \xi_{k,i}^\d$ 
for $i \ne i_k$ and 
$$
\xi_{k+1, i_k}^\d = \xi_{k, i_k}^\d - \gamma A_{i_k}^* r_k^\d;
$$

\item[(iii)] Update $x_{k+1}^\d$ by the equation (\ref{RBCD.73}); 

\item[(iv)] Calculate $r_{k+1}^\d = r_k^\d + A_{i_k}(x_{k+1, i_k}^\d - x_{k,i_k}^\d)$. 
\end{enumerate}
\end{algorithm}

The analysis on Algorithm \ref{alg:RBCD+} is rather challenging. In the following we will prove
some results which support the use of Algorithm \ref{alg:RBCD+} to solve ill-posed problems. We 
start with the following lemma. 

\begin{lemma}\label{RBCD:lem3.2}
Let Assumption \ref{ass2} hold and consider Algorithm \ref{alg:RBCD+}. Then for any solution $\hat x$ 
of (\ref{eq:problem}) in $\emph{dom}(\R)$ there holds 
\begin{align*}
\EE\left[D_\R^{\xi_{k+1}^\d}(\hat x, x_{k+1}^\d)\Big|\F_k\right] - D_\R^{\xi_k^\d}(\hat x, x_k^\d) 
& \le - c_3 \frac{\gamma}{b} \|A x_k^\d - y^\d\|^2
+ \frac{\gamma}{b} \d \|A x_k^\d - y^\d\|
\end{align*}
for all integers $k \ge 0$, where $c_3 := 1- \gamma\|A\|^2/(4 \kappa)$.
\end{lemma}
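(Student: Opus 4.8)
The plan is to combine the three-point identity for Bregman distances with the strong convexity of $\R$ furnished by (\ref{RBCD.62}). First I would record the elementary identity, valid because Assumption \ref{ass2} guarantees $\xi_k^\d \in \p\R(x_k^\d)$ and $\xi_{k+1}^\d \in \p\R(x_{k+1}^\d)$,
\begin{align*}
D_\R^{\xi_{k+1}^\d}(\hat x, x_{k+1}^\d) - D_\R^{\xi_k^\d}(\hat x, x_k^\d)
= - D_\R^{\xi_k^\d}(x_{k+1}^\d, x_k^\d) + \l \xi_k^\d - \xi_{k+1}^\d, \hat x - x_{k+1}^\d\r,
\end{align*}
which follows directly from the definition $D_\R^\xi(\bar x, x) = \R(\bar x) - \R(x) - \l\xi, \bar x - x\r$. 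Since only the $i_k$-th block is touched in steps (ii)--(iii) of Algorithm \ref{alg:RBCD+}, the vector $\xi_k^\d - \xi_{k+1}^\d$ has $i_k$-th component $\gamma A_{i_k}^* r_k^\d$ and zero elsewhere, where $r_k^\d = A x_k^\d - y^\d$, and likewise $x_k^\d - x_{k+1}^\d$ is supported on the $i_k$-th block.

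Next I would split $\hat x - x_{k+1}^\d = (\hat x - x_k^\d) + (x_k^\d - x_{k+1}^\d)$ in the inner-product term, which turns it into $\gamma\l r_k^\d, A_{i_k}(\hat x_{i_k} - x_{k,i_k}^\d)\r + \gamma\l A_{i_k}^* r_k^\d, x_{k,i_k}^\d - x_{k+1,i_k}^\d\r$. The main obstacle is the last summand, which involves the implicitly defined iterate $x_{k+1,i_k}^\d$ coming from the minimization (\ref{RBCD.73}); the trick will be to avoid computing it by Young's inequality $\gamma\l A_{i_k}^* r_k^\d, x_{k,i_k}^\d - x_{k+1,i_k}^\d\r \le \kappa\|x_{k+1,i_k}^\d - x_{k,i_k}^\d\|^2 + \frac{\gamma^2}{4\kappa}\|A_{i_k}^* r_k^\d\|^2$ and then bounding $- D_\R^{\xi_k^\d}(x_{k+1}^\d, x_k^\d) \le -\kappa\|x_{k+1}^\d - x_k^\d\|^2 = -\kappa\|x_{k+1,i_k}^\d - x_{k,i_k}^\d\|^2$ via (\ref{RBCD.62}), so that the two quadratic terms in $\|x_{k+1,i_k}^\d - x_{k,i_k}^\d\|^2$ cancel exactly.

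What then remains is $\gamma\l r_k^\d, A_{i_k}(\hat x_{i_k} - x_{k,i_k}^\d)\r + \frac{\gamma^2}{4\kappa}\|A_{i_k}^* r_k^\d\|^2$, to which I would apply $\EE[\cdot\,|\F_k]$: averaging over the uniform choice of $i_k$ converts the block quantities into the full operator, so $\EE[\l r_k^\d, A_{i_k}(\hat x_{i_k} - x_{k,i_k}^\d)\r|\F_k] = \frac{1}{b}\l r_k^\d, A(\hat x - x_k^\d)\r$, and using $A\hat x = y$ together with $A x_k^\d = r_k^\d + y^\d$ this equals $\frac{1}{b}(\l r_k^\d, y - y^\d\r - \|r_k^\d\|^2) \le \frac{1}{b}(\d\|r_k^\d\| - \|r_k^\d\|^2)$, while $\EE[\|A_{i_k}^* r_k^\d\|^2|\F_k] = \frac{1}{b}\|A^* r_k^\d\|^2 \le \frac{\|A\|^2}{b}\|r_k^\d\|^2$. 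Collecting the coefficients of $\|r_k^\d\|^2$ gives precisely $-\frac{\gamma}{b}\big(1 - \frac{\gamma\|A\|^2}{4\kappa}\big)\|r_k^\d\|^2 + \frac{\gamma}{b}\d\|r_k^\d\|$ with $\|r_k^\d\| = \|A x_k^\d - y^\d\|$, which is the assertion with $c_3 = 1 - \gamma\|A\|^2/(4\kappa)$. The only point requiring care is legitimacy of the opening identity, namely that $\xi_{k+1}^\d$ is genuinely a subgradient of $\R$ at $x_{k+1}^\d$ — this is exactly the role of Assumption \ref{ass2} in making the update (\ref{RBCD.73}) well-posed.
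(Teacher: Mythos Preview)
Your argument is correct and reaches exactly the same pre-expectation bound as the paper, namely
\[
D_\R^{\xi_{k+1}^\d}(\hat x, x_{k+1}^\d) - D_\R^{\xi_k^\d}(\hat x, x_k^\d)
\le \frac{\gamma^2}{4\kappa}\|A_{i_k}^* r_k^\d\|^2 - \gamma \l r_k^\d, A_{i_k}(x_{k,i_k}^\d - \hat x_{i_k})\r,
\]
after which the conditional-expectation step is identical. The route to this bound, however, differs. The paper uses the three-point identity
\[
\Delta_{k+1}^\d - \Delta_k^\d
= D_\R^{\xi_{k+1}^\d}(x_k^\d, x_{k+1}^\d) + \l \xi_{k+1}^\d - \xi_k^\d, x_k^\d - \hat x\r
\]
and then applies the dual-type estimate (\ref{RBCD.61}), $D_\R^{\xi_{k+1}^\d}(x_k^\d, x_{k+1}^\d) \le \frac{1}{4\kappa}\|\xi_{k+1}^\d - \xi_k^\d\|^2$, which immediately eliminates any dependence on the implicitly defined $x_{k+1}^\d$ because the inner product already involves only $x_k^\d$. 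You instead start from the mirror three-point identity involving $-D_\R^{\xi_k^\d}(x_{k+1}^\d, x_k^\d)$ and $\l \xi_k^\d - \xi_{k+1}^\d, \hat x - x_{k+1}^\d\r$, which forces a split of $\hat x - x_{k+1}^\d$ and a Young-inequality cancellation against the primal strong-convexity bound (\ref{RBCD.62}). Both are valid; the paper's version is a bit shorter since (\ref{RBCD.61}) packages the Young step internally, while yours has the pedagogical advantage of using only the basic strong-convexity inequality (\ref{RBCD.62}) rather than its consequence (\ref{RBCD.61}).
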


\begin{proof}
For any solution $\hat x$ of (\ref{eq:problem}) in $\mbox{dom}(\R)$ let 
$\Delta_k^\d := D_\R^{\xi_k^\d}(\hat x, x_k^\d)$ for all integers $k$. Then, by using (\ref{RBCD.61}) 
and the definition of $\xi_{k+1}^\d$, we have 
\begin{align*}
\Delta_{k+1}^\d - \Delta_k^\d 
& = D_\R^{\xi_{k+1}^\d}(x_k^\d, x_{k+1}^\d) + \l \xi_{k+1}^\d - \xi_k^\d, x_k^\d - \hat x\r \\
& \le \frac{1}{4 \kappa} \|\xi_{k+1}^\d - \xi_k^\d\|^2 + \l \xi_{k+1}^\d - \xi_k^\d, x_k^\d - \hat x\r \\
& = \frac{1}{4 \kappa} \|\xi_{k+1, i_k}^\d - \xi_{k,i_k}^\d\|^2 
+ \l \xi_{k+1, i_k}^\d - \xi_{k, i_k}^\d, x_{k, i_k}^\d - \hat x_{i_k}\r \\
& = \frac{\gamma^2}{4 \kappa} \|A_{i_k}^*(A x_k^\d - y^\d)\|^2 
- \gamma \l A x_k^\d - y^\d, A_{i_k}(x_{k, i_k}^\d - \hat x_{i_k}^\d)\r.
\end{align*}
By taking the conditional expectation on $\F_k$ and using $\|y^\d - y\| \le \d$ we can obtain  
\begin{align*}
\EE[\Delta_{k+1}^\d|\F_k] - \Delta_k^\d 
& \le \frac{\gamma^2}{4 \kappa b} \sum_{i=1}^b \|A_i^*(A x_k^\d - y^\d)\|^2 
- \frac{\gamma}{b} \l A x_k^\d - y^\d, A x_k^\d - y\r \nonumber \\
& = \frac{\gamma^2}{4 \kappa b} \|A^*(A x_k^\d - y^\d)\|^2 - \frac{\gamma}{b} \|A x_k^\d - y^\d\|^2 
- \frac{\gamma}{b} \l A x_k^\d - y^\d, y^\d - y\r \nonumber \\
& \le - \left(1- \frac{\gamma\|A\|^2}{4 \kappa}\right) \frac{\gamma}{b} \|A x_k^\d - y^\d\|^2 
+ \frac{\gamma}{b} \d \|A x_k^\d - y^\d\|.
\end{align*}
The proof is therefore complete. 
\end{proof}

\begin{theorem}\label{RBCD.thm21}
Let $\X$ be finite dimensional and let $\R: \X \to {\mathbb R}$ satisfy Assumption \ref{ass2}. 
Consider Algorithm \ref{alg:RBCD+} with the exact data. If $0< \gamma <4 \kappa/\|A\|^2$, then 
$\{x_k\}$ converges to a random solution $\bar x$ of (\ref{eq:problem}) almost surely. 
\end{theorem}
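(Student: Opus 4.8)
The plan is to adapt the proof of Theorem \ref{RBCD.thm1} to the Bregman setting, using Lemma \ref{RBCD:lem3.2} as the replacement for Lemma \ref{RBCD.lem2}. First I would observe that, since we use exact data ($\d = 0$), Lemma \ref{RBCD:lem3.2} gives
\begin{align*}
\EE\left[D_\R^{\xi_{k+1}}(\hat x, x_{k+1})\big|\F_k\right] \le D_\R^{\xi_k}(\hat x, x_k) - c_3 \frac{\gamma}{b} \|A x_k - y\|^2
\end{align*}
for any solution $\hat x \in \mbox{dom}(\R)$, where $c_3 = 1 - \gamma \|A\|^2/(4\kappa) > 0$ by the step-size assumption. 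Hence $\{D_\R^{\xi_k}(\hat x, x_k)\}$ is a nonnegative supermartingale, so by Proposition \ref{prop:DMT} it converges almost surely to a finite limit; moreover, taking full expectations and summing yields $\EE[\sum_{k=0}^\infty \|A x_k - y\|^2] \le b\, \R(\hat x)/(\gamma c_3) < \infty$ (using $D_\R^{\xi_0}(\hat x, x_0) = \R(\hat x) - \R(x_0) \le \R(\hat x) - \min \R$, and recalling $\xi_0 = 0$), so the event $\Omega_2 = \{\sum_k \|A x_k - y\|^2 < \infty\}$ has probability one, giving $\|A x_k - y\| \to 0$ almost surely along $\Omega_2$.

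Next I would upgrade the per-solution almost-sure convergence of $\{D_\R^{\xi_k}(\hat x, x_k)\}$ to a simultaneous statement. Since $\X$ is finite dimensional, the solution set $S$ is a closed affine subspace, hence separable, so I can pick a countable dense subset $C \subset S \cap \mbox{dom}(\R)$ (nonemptiness of $S \cap \mbox{dom}(\R)$ is part of the standing hypothesis that (\ref{Rmin}) is solvable). Setting $\Omega_1 := \bigcap_{z \in C} \{D_\R^{\xi_k}(z, x_k) \text{ converges}\}$ gives $\P(\Omega_1) = 1$. Using inequality (\ref{RBCD.62}), i.e. $D_\R^{\xi_k}(z, x_k) \ge \kappa \|z - x_k\|^2$, convergence of $D_\R^{\xi_k}(z, x_k)$ for $z$ in a dense set implies $\{x_k\}$ is bounded on $\Omega_1$; extracting a weak (here strong, since $\X$ is finite dimensional) cluster point $\bar x$ of $\{x_k\}$ along a subsequence and using $\|A x_k - y\| \to 0$ gives $A\bar x = y$, so $\bar x \in S$. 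I then need $\bar x \in \mbox{dom}(\R)$: since $x_{k_l} \to \bar x$, $\R(x_{k_l}) \le \R(x_0) + \l \xi_{k_l}, x_{k_l} - x_0\r + D_\R^{\xi_{k_l}}(\bar x, x_{k_l})$ ... actually more directly, lower semicontinuity of $\R$ plus the boundedness of $\R(x_{k_l})$ (which follows because $D_\R^{\xi_{k_l}}(z, x_{k_l})$ is bounded and $\R(x_k)$ is nonincreasing-ish — here I must be a little careful) gives $\R(\bar x) < \infty$.

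To finish, I would show the whole sequence converges to this $\bar x$. Along a subsequence where $x_{k_l} \to \bar x$, density of $C$ lets me pick $z_l \in C$ with $z_l \to \bar x$, and then a triangle-inequality/interpolation argument (as in the proof of Theorem \ref{RBCD.thm1}, but now using $D_\R^{\xi_k}(\bar x, x_k) \le \frac{1}{4\kappa}\|\xi_k - \nabla\R(\bar x)\|^2$... no — simpler: use $\kappa\|x_k - \bar x\|^2 \le D_\R^{\xi_k}(\bar x, x_k) \le D_\R^{\xi_k}(z_l, x_k) + |\R(z_l) - \R(\bar x)| + |\l\xi_k, z_l - \bar x\r|$, then bound $\|\xi_k\|$ using that $\xi_k \in \p\R(x_k)$ and $\{x_k\}$ is bounded in a finite-dimensional space, so $\{\xi_k\}$ is bounded) shows $\limsup_k \|x_k - \bar x\|$ is dominated by a quantity tending to $0$ as $l \to \infty$, hence $x_k \to \bar x$. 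The main obstacle I anticipate is the bookkeeping needed to show $\bar x \in \mbox{dom}(\R)$ and that $D_\R^{\xi_k}(\bar x, x_k) \to 0$ rather than merely to some finite limit: unlike the Hilbert-space case, $D_\R^{\xi_k}(\bar x, x_k)$ need not be a supermartingale for $\bar x$ the (random) cluster point itself, so I expect to route through the dense set $C$ and the two-sided bound $\kappa\|z - x_k\|^2 \le D_\R^{\xi_k}(z, x_k)$ together with boundedness of $\{\xi_k\}$ in the finite-dimensional setting — this last point is where finite-dimensionality of $\X$ is genuinely used.
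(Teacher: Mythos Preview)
Your strategy is sound and tracks the paper's proof through the first several steps: Lemma \ref{RBCD:lem3.2} with $\d=0$ gives the supermartingale inequality, Proposition \ref{prop:DMT} yields almost-sure convergence of $D_\R^{\xi_k}(z,x_k)$ for each fixed solution $z$, a countable-dense-set argument upgrades this, and summing gives $\sum_k\|Ax_k-y\|^2<\infty$ a.s. One triviality: your worry about $\bar x\in\mbox{dom}(\R)$ is moot, since the hypothesis $\R:\X\to\mathbb R$ (not $(-\infty,\infty]$) means $\mbox{dom}(\R)=\X$; this is also what makes $\R$ locally Lipschitz and $\p\R$ locally bounded, which you correctly exploit.

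The gap you flag in the final step is real and not yet closed by what you wrote. From your bound
\[
\kappa\|x_k-\bar x\|^2 \le D_\R^{\xi_k}(z_l,x_k)+|\R(\bar x)-\R(z_l)|+M\|z_l-\bar x\|,
\]
taking $\limsup_k$ leaves $\lim_k D_\R^{\xi_k}(z_l,x_k)$ on the right, and nothing in your outline forces this to tend to $0$ with $l$. The fix is short once seen: since the full sequence $\{D_\R^{\xi_k}(z_l,x_k)\}$ converges, its limit may be computed along the subsequence where $x_{k_m}\to\bar x$ and (after a further extraction, using boundedness of $\{\xi_k\}$) $\xi_{k_m}\to\bar\xi\in\p\R(\bar x)$; continuity of $\R$ then gives $\lim_k D_\R^{\xi_k}(z_l,x_k)=D_\R^{\bar\xi}(z_l,\bar x)$, which goes to $0$ as $z_l\to\bar x$. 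With this inserted, your argument is complete.

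The paper handles the last step differently and more in parallel with Theorem \ref{RBCD.thm1}. It first uses the same sandwich (with $\|\xi_k\|\le M$) to show $\{D_\R^{\xi_k}(\tilde z,x_k)\}$ converges for \emph{every} $\tilde z\in S$, not just for $\tilde z$ in the dense set. Then it argues uniqueness of the cluster point: if $x_{n_l}\to x^*\in S$ and $\xi_{n_l}\to\xi^*\in\p\R(x^*)$ along another subsequence, the identity
\[
\l\xi_k,x^*-\bar x\r=D_\R^{\xi_k}(\bar x,x_k)-D_\R^{\xi_k}(x^*,x_k)+\R(x^*)-\R(\bar x)
\]
shows $\lim_k\l\xi_k,x^*-\bar x\r$ exists; evaluating along both subsequences gives $\l\xi^*-\bar\xi,x^*-\bar x\r=0$, and strong convexity of $\R$ forces $x^*=\bar x$. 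This route never needs the numerical value of any Bregman-distance limit, only its existence, which is why the paper's argument is a bit cleaner than yours.
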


\begin{proof}
Since $\X$ is finite dimensional and $\R$ maps $\X$ to ${\mathbb R}$, the convex function $\R$ 
is Lipschitz continuous on bounded sets and $\p \R(x) \ne \emptyset$ for all $x \in \X$; moreover, 
for any bounded set $K \subset \X$ there is a constant $C_K$ such that $\|\xi\| \le C_K$ for all 
$\xi \in \p \R(x)$ and $x \in K$. 

In the following we will use the similar argument in the proof of Theorem \ref{RBCD.thm1} to show 
the almost sure convergence of $\{x_k\}$. Let $S$ denote the set of solutions of (\ref{eq:problem})
in $\mbox{dom}(\R)$. By using Lemma \ref{RBCD:lem3.2} with exact data and Proposition \ref{prop:DMT} 
we can conclude for any $z \in S$ that the event 
$$
\tilde \Omega_z:= \left\{\lim_{k\to \infty} D_\R^{\xi_k}(z, x_k) \mbox{ exists  and is finite}\right\}
$$
has probability one. Since $\X$ is separable, we can find a countable set $D\subset S$ 
such that $D$ is dense in $S$. Let 
$$
\tilde \Omega_1 := \bigcap_{z\in D} \tilde \Omega_z.
$$
Then $\P(\tilde \Omega_1) = 1$. We now show that, for any $\tilde z\in S$, along any sample path 
in $\tilde \Omega_1$ the sequence $\{D_\R^{\xi_k}(\tilde z, x_k)\}$ is convergent. To see this, 
we take a sequence $\{z_l\}\subset D$ such that $z_l \to \tilde z$ as $l \to \infty$. For any 
sample path $\omega \in \tilde \Omega_1$ we have  
$$
D_\R^{\xi_k(\omega)} (\tilde z, x_k(\omega)) - D_\R^{\xi_k(\omega)} (z_l, x_k(\omega)) 
= \R(\tilde z) - \R(z_l) - \l \xi_k(\omega), \tilde z - z_l\r. 
$$
Since $\{D_\R^{\xi_k(\omega)}(z_l, x_k(\omega))\}$, for a fixed $l$, is convergent, it is bounded. 
By (\ref{RBCD.62}), $\{x_k(\omega)\}$ is then bounded and hence we can find a constant $M$ 
such that $\|\xi_k(\omega)\| \le M$ for all $k$. Therefore 
\begin{align*}
\R(\tilde z) - \R(z_l) - M\|\tilde z - z_l\| 
& \le D_\R^{\xi_k(\omega)} (\tilde z, x_k(\omega)) - D_\R^{\xi_k(\omega)} (z_l, x_k(\omega)) \\
& \le  \R(\tilde z) - \R(z_l) + M\|\tilde z - z_l\|. 
\end{align*}
This together with the existence of $\lim_{k\to \infty} D_\R^{\xi_k(\omega)} (z_l, x_k(\omega))$ implies 
$$
0 \le \limsup_{k\to \infty} D_\R^{\xi_k(\omega)} (\tilde z, x_k(\omega))
- \liminf_{k\to \infty} D_\R^{\xi_k(\omega)} (\tilde z, x_k(\omega))
\le 2 M \|z_l - \tilde z\|.
$$
Letting $l \to \infty$ then shows that $\lim_{k\to \infty} D_\R^{\xi_k(\omega)} (\tilde z, x_k(\omega))$ 
exists and is finite for every $\omega \in \tilde \Omega_1$ and $\tilde z \in S$. 

Next we show the almost sure convergence of $\{x_k\}$. By using Lemma \ref{RBCD:lem3.2} with exact data 
we can conclude the event  
\begin{align*}
\tilde \Omega_2:= \left\{ \sum_{k=0}^\infty \|A x_k - y\|^2 < \infty\right\}
\end{align*}
has probability one. Let $\tilde \Omega_0:= \tilde \Omega_1 \cap \tilde \Omega_2$. Then 
$\P(\tilde \Omega_0) = 1$. Note that, along any sample path in $\tilde \Omega_0$, 
$\{D_\R^{\xi_k}(z, x_k)\}$ is convergent for any $z \in S$ 
and thus $\{x_k\}$ and $\{\xi_k\}$ are bounded. Thus, we can find a subsequence $\{k_l\}$ of 
positive integers such that $x_{k_l}\to \bar x$ and $\xi_{k_l} \to \bar \xi$ as $l \to \infty$. 
Because $\xi_{k_l} \in \p \R(x_{k_l})$, we have $\bar \xi \in \p \R(\bar x)$. Since 
$\|A x_k - y\| \to 0$ as $k \to \infty$, we can obtain $A \bar x = y$, i.e. $\bar x \in S$. 
Consequently $\{D_\R^{\xi_k} (\bar x, x_k)\}$ is convergent. To show $x_k \to \bar x$, it suffices 
to show that $\bar x$ is the unique cluster point of $\{x_k\}$. Let $x^*$ be any cluster point of 
$\{x_k\}$. Then, by using the boundedness of $\{\xi_k\}$, there is another subsequence $\{n_l\}$ 
of positive integers such that $x_{n_l} \to x^*$ and $\xi_{n_l} \to \xi^*$ as $l \to \infty$. Then 
$\xi^* \in \p \R(x^*)$, $x^* \in S$ and thus $\{D_\R^{\xi_k}(x^*, x_k)\}$ is convergent. Noting the 
identity
$$
\l \xi_k, x^* - \bar x\r = D_\R^{\xi_k}(\bar x, x_k) - D_\R^{\xi_k}(x^*, x_k) + \R(x^*) - \R(\bar x),  
$$
we can conclude that $\lim_{k\to \infty} \l \xi_k, x^*- \bar x\r$ exists. Therefore 
\begin{align*}
\lim_{k\to \infty} \l \xi_k, x^* - \bar x\r & = \lim_{l\to \infty} \l \xi_{k_l}, x^*- \bar x\r 
= \l \bar \xi, x^* - \bar x\r, \\
\lim_{k\to \infty} \l \xi_k, x^* - \bar x\r & = \lim_{l\to \infty} \l \xi_{n_l}, x^* - \bar x\r 
= \l \xi^*, x^* - \bar x\r
\end{align*}
and thus $\l \xi^* - \bar \xi, x^* - \bar x\r = 0$. Since $\bar \xi \in \p \R(\bar x)$ and 
$\xi^*\in \p \R(x^*)$, we may use the strong convexity of $\R$ to conclude $x^* = \bar x$. The proof 
is complete. 
\end{proof}

For Algorithm \ref{alg:RBCD+} with noisy data, we have the following convergence result under
an {\it a priori} stopping rule. 

\begin{theorem}\label{RBCD.thm22}
Let $\X$ be finite dimensional and let $\R: \X \to {\mathbb R}$ satisfy Assumption \ref{ass2}. 
Consider Algorithm \ref{alg:RBCD+} with $0< \gamma <4 \kappa/\|A\|^2$. Let $\{x_k\}$ be the random 
sequence determined by Algorithm \ref{alg:RBCD+} with the exact data. If $\{x_k\}$ is uniformly 
bounded in the sense of (\ref{bound}) and if the integer $k_\d$ is chosen such that $k_\d \to \infty$ 
and $\d^2 k_\d\to 0$ as $\d \to 0$, then
$$
\EE\left[\|x_{k_\d}^\d - x^\dag\|^2\right] \to 0 
$$
as $\d \to 0$, where $x^\dag$ denotes the unique solution of (\ref{eq:problem}) satisfying 
(\ref{Rmin}). 
\end{theorem}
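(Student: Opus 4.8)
## Proof Plan for Theorem \ref{RBCD.thm22}

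The plan is to follow the same two-step decomposition used throughout Section \ref{sect2}: split the error $x_{k_\d}^\d - x^\dag$ into a stability part $x_{k_\d}^\d - x_{k_\d}$ and an approximation part $x_{k_\d} - x^\dag$, control the former via a stability estimate analogous to Lemma \ref{RBCD.lem1}, and control the latter via Theorem \ref{RBCD.thm21} together with the uniform boundedness hypothesis (\ref{bound}). The main new ingredient needed is a stability estimate for Algorithm \ref{alg:RBCD+} in the Bregman setting, and an identification of the limit $\bar x$ from Theorem \ref{RBCD.thm21} as the $\R$-minimizing solution $x^\dag$.

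\textbf{Step 1: Bregman stability estimate.} First I would establish, in parallel with Lemma \ref{RBCD.lem1}, that $\EE[\|x_k^\d - x_k\|^2] \le C k \d^2$ for a constant $C$ depending only on $\gamma, \kappa, b, \|A\|$. The natural quantity to track is $D_\R^{\xi_k^\d}(x_k, x_k^\d) + D_\R^{\xi_k}(x_k^\d, x_k)$ (or a symmetrized Bregman distance between the noisy and exact iterates); since $i_k$ is common to both runs, only the $i_k$-th component changes, and one can expand using (\ref{RBCD.61}), the three-point identity for Bregman distances, and the bound $\|A(x_k^\d - x_k) - y^\d + y\| \le \d$ which still holds here since the residual recursion is unchanged from Algorithm \ref{alg:RBCD}. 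Combined with the lower bound (\ref{RBCD.62}), $D_\R \ge \kappa\|\cdot\|^2$, this yields $\EE[\|x_{k_\d}^\d - x_{k_\d}\|^2] \le C k_\d \d^2 \to 0$ by the choice of $k_\d$.

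\textbf{Step 2: Identifying $\bar x = x^\dag$.} Theorem \ref{RBCD.thm21} gives $x_k \to \bar x$ almost surely for some random solution $\bar x$. Under (\ref{bound}) I would show $\bar x = x^\dag$ almost surely by mimicking Remark \ref{RBCD.rem1}: prove by induction that $\EE[\l \xi_k - \xi_0, \bar x - x^\dag\r] = 0$ using the update (\ref{RBCD.72}) and $A\bar x = y = A x^\dag$, then pass to the limit (the boundedness of $\{x_k\}$ forces $\{\xi_k\}$ bounded in finite dimensions, so dominated convergence applies) to get $\EE[\l \bar\xi - \xi_0, \bar x - x^\dag\r] = 0$ where $\bar\xi \in \p\R(\bar x)$ is the limit of $\xi_k$. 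Since $\xi_0 = 0$, the optimality condition for (\ref{Rmin}) reads $\l \bar\xi, \bar x - x^\dag\r \ge 0$ whenever $\bar\xi \in \p\R(\bar x)$ and combining with the minimum-norm-type identity gives, via strong convexity, $\bar x = x^\dag$.

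\textbf{Step 3: Conclusion.} With both pieces in hand, $\EE[\|x_{k_\d}^\d - x^\dag\|^2] \le 2\EE[\|x_{k_\d}^\d - x_{k_\d}\|^2] + 2\EE[\|x_{k_\d} - x^\dag\|^2]$; the first term vanishes by Step 1, and the second vanishes by Theorem \ref{RBCD.thm21} combined with Step 2 and the dominated convergence theorem (again using (\ref{bound})). The hard part will be Step 1: unlike the Hilbert-space computation in Lemma \ref{RBCD.lem1}, the Bregman distance between the two runs does not telescope cleanly because $\xi_{k+1}^\d - \xi_{k+1}$ and $x_{k+1}^\d - x_{k+1}$ are related only through the nonlinear duality map $\p\R^{-1}$, so one must carefully exploit strong convexity on both the primal and dual sides and absorb cross terms using Young's inequality with the slack provided by $\gamma < 4\kappa/\|A\|^2$.
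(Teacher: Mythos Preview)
Your plan diverges from the paper's route in a significant way, and Step~1 contains a genuine gap.

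The claim that ``$\|A(x_k^\d - x_k) - y^\d + y\| \le \d$ still holds here since the residual recursion is unchanged from Algorithm \ref{alg:RBCD}'' is false. The residual update formula $r_{k+1}^\d = r_k^\d + A_{i_k}(x_{k+1,i_k}^\d - x_{k,i_k}^\d)$ is formally the same, but the increment $x_{k+1,i_k}^\d - x_{k,i_k}^\d$ is now obtained through the nonlinear map $\xi \mapsto \arg\min_z\{\R_{i_k}(z) - \l \xi, z\r\}$ rather than the linear step $-\gamma A_{i_k}^* r_k^\d$. Hence the contraction identity $Au_{k+1}^\d - y^\d + y = (I - \gamma A_{i_k} A_{i_k}^*)(Au_k^\d - y^\d + y)$ from the proof of Lemma \ref{RBCD.lem1} does not hold, and there is no obvious replacement. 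Without this bound, your proposed control of a symmetrized Bregman distance between the noisy and exact runs has no clean telescoping: the cross term involves $\l A(\xi_k^\d - \xi_k), A(x_k^\d - x_k)\r$, and the dual and primal differences are linked only through $\p\R$, so you cannot absorb it using the slack $\gamma < 4\kappa/\|A\|^2$ alone. You correctly flagged this as the hard part, but as sketched it does not go through.

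The paper avoids this difficulty entirely by \emph{not} decomposing into stability plus approximation. Instead it works directly with $\Delta_k^\d := D_\R^{\xi_k^\d}(x^\dag, x_k^\d)$ and uses Lemma \ref{RBCD:lem3.2} to obtain $\EE[\Delta_{k+1}^\d] \le \EE[\Delta_k^\d] + C\d^2$. Telescoping from an arbitrary fixed $k$ to $k_\d$ gives $\EE[\Delta_{k_\d}^\d] \le \EE[\Delta_k^\d] + C k_\d \d^2$; letting $\d\to 0$ with $k$ fixed uses only the trivial pointwise fact $x_k^\d \to x_k$, $\xi_k^\d \to \xi_k$ for each fixed $k$, yielding $\limsup_{\d\to 0}\EE[\Delta_{k_\d}^\d] \le \EE[D_\R^{\xi_k}(x^\dag, x_k)]$. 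Then $k\to\infty$ finishes via your Step~2 (which the paper carries out essentially as you outline, choosing $\xi^\dag \in \p\R(x^\dag)$ with $\l\xi^\dag, \bar x - x^\dag\r = 0$ and using strong monotonicity). The key structural point you are missing is that the near-supermartingale inequality for $\Delta_k^\d$ already encodes all the needed $\d$-stability, so no separate stability lemma of the form $\EE[\|x_k^\d - x_k\|^2] \le Ck\d^2$ is required.
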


\begin{proof}
According to Theorem \ref{RBCD.thm21}, there is a random solution $\bar x$ of (\ref{eq:problem})
such that $x_k \to \bar x$ as $k \to \infty$ almost surely. Since $\{x_k\}$ is assumed to satisfy
(\ref{bound}), we can use the dominated convergence theorem to conclude $\EE[\|x_k - \bar x\|^2] \to 0$
as $k \to \infty$. Based on this, we will show 
\begin{align}\label{RBCD+.0}
\lim_{k\to \infty} \EE[\|x_k - x^\dag\|^2] = 0.  
\end{align} 
Indeed, by using a similar argument in Remark \ref{RBCD.rem1} and noting $\xi_0 =0$ we can conclude that 
\begin{align}\label{RBCD+.1}
\EE[\l \xi_k, \bar x - x^\dag\r] =0, \quad \forall k. 
\end{align}
According to the definition of $x^\dag$, the convex function $\varphi(t):= \R(x^\dag + t (\bar x - x^\dag))$
on ${\mathbb R}$ achieves its minimum at $t=0$ and thus $0 \in \p \varphi(0)$. Note that 
$$
\p \varphi(0) = \{\l \xi, \bar x - x^\dag\r: \xi \in \p \R(x^\dag)\}.
$$
Consequently, there exists $\xi^\dag \in \p \R(x^\dag)$ such that $\l \xi^\dag, \bar x - x^\dag\r =0$. 
This $\xi^\dag$ depends on $\bar x$ and hence it is a random element in $\p \R(x^\dag)$. Nevertheless, 
$\EE[\l \xi^\dag, \bar x - x^\dag\r] = 0$. Combining this with (\ref{RBCD+.1}) gives 
$$
0 = \EE[\l \xi_k - \xi^\dag, \bar x - x^\dag\r] 
= \EE[\l \xi_k - \xi^\dag, x_k - x^\dag\r] + \EE[\l \xi_k - \xi^\dag, \bar x - x_k\r]
$$
for all integers $k \ge 0$. Since $\{x_k\}$ is uniformly bounded in the sense of (\ref{bound}) and $\R$ 
is Lipschitz continuous on bounded sets, we can find a constant $M$ such that $\|\xi^\dag\|\le M$ 
and $\|\xi_k\|\le M$ for all $k$ almost surely. Thus 
\begin{align*}
\left|\EE[\l \xi_k - \xi^\dag, \bar x - x_k\r]\right| 
\le 2 M \EE[\|\bar x - x_k\|] \le 2 M (\EE[\|x_k - \bar x\|^2])^{1/2} \to 0 
\end{align*}
as $k \to \infty$. Consequently 
$$
\lim_{k\to \infty} \EE[|\l \xi_k - \xi^\dag, x_k - x^\dag\r| = 0. 
$$
By the strong convexity of $\R$, we have $\l \xi_k - \xi^\dag, x_k - x^\dag\r 
\ge 2 \kappa \|x_k - x^\dag\|^2$. Therefore $\lim_{k\to \infty} \EE[\|x_k - x^\dag\|^2] =0$
which shows (\ref{RBCD+.0}). 

Next, by using the definition of $\xi_k^\d, x_k^\d$ and $\xi_k, x_k$, it is easy to show by an 
induction argument that, along any sample path there holds 
\begin{align}\label{RBCD+.2}
\xi_k^\d \to \xi_k \quad \mbox{and} \quad x_k^\d \to x_k \quad \mbox{as } \d \to 0
\end{align}
for every integer $k\ge 0$. 

Finally, we show $\EE[\|x_{k_\d}^\d - x^\dag\|^2] \to 0$ as $\d \to 0$ if $k_\d$ is chosen 
such that $k_\d\to \infty$ and $\d^2 k_\d \to 0$ as $\d \to 0$. To see this, let 
$\Delta_k^\d := D_\R^{\xi_k^\d}(x^\dag, x_k^\d)$ for all $k \ge 0$. From Lemma \ref{RBCD:lem3.2} 
it follows that 
\begin{align*}
\EE[\Delta_{k+1}^\d] 
\le \EE[\Delta_k^\d] + \frac{\gamma}{4b c_3} \d^2, \quad \forall k \ge 0.
\end{align*}
Let $k\ge 0$ be any fixed integer. Since $k_\d \to \infty$, we have $k_\d >k$ for small $\d>0$. 
Consequently, by virtue of the above inequality, we can obtain 
\begin{align*}
\EE[\Delta_{k_\d}^\d] \le  \EE[\Delta_k^\d] + \frac{\gamma}{4b c_3} (k_\d - k) \d^2 
\le \EE[\Delta_k^\d] + \frac{\gamma}{4 b c_3} k_\d \d^2
\end{align*}
Since $\d^2 k_\d \to 0$ as $\d \to 0$, we may use (\ref{RBCD+.2}) and the continuity of $\R$ to 
further obtain 
\begin{align*}
\limsup_{\d\to 0} \EE[\Delta_{k_\d}^\d] 
\le \limsup_{\d\to 0} \EE[\Delta_k^\d] = \EE\left[D_\R^{\xi_k}(x^\dag, x_k)\right].
\end{align*}
Since $\{x_k\}$ satisfies (\ref{bound}) and $\R$ is Lipschitz continuous on bounded sets, there 
is a constant $L$ such that 
\begin{align*}
D_\R^{\xi_k}(x^\dag, x_k) 
& = \R(x^\dag) - \R(x_k) - \l \xi_k, x^\dag - x_k\r \\
& \le |\R(x^\dag) - \R(x_k)| + \|\xi_k\| \|x^\dag - x_k\| \\
& \le (L + M) \|x^\dag - x_k\|
\end{align*}
Therefore 
\begin{align*}
\limsup_{\d\to 0} \EE[\Delta_{k_\d}^\d] 
\le (L+M) \EE[\| x_k - x^\dag\|] \le (L+M) (\EE[\|x_k - x^\dag\|^2])^{1/2}
\end{align*}
for all $k \ge 0$. By taking $k \to \infty$ and using (\ref{RBCD+.0}) we thus have 
$\limsup_{\d\to 0} \EE[\Delta_{k_\d}^\d] \le 0$ which implies $\EE[\Delta_{k_\d}^\d] \to 0$
as $\d \to 0$. Consequently, it follows from the strong convexity of $\R$ that 
$\EE[\|x_{k_\d}^\d - x^\dag\|^2] \to 0$ as $\d \to 0$. 
\end{proof}

In the above results we considered Algorithm \ref{alg:RBCD+} by terminating the iterations
via {\it a priori} stopping rules. Analogous to Algorithm \ref{alg:RBCD_DP}, we may consider 
incorporating the discrepancy principle into Algorithm \ref{alg:RBCD+}. This leads to the 
following algorithm. 

\begin{algorithm}\label{alg:RBCD+DP}
Let $\xi_0 = 0 \in \X$ and define $x_0 := \arg\min_{x\in \X} \R(x)$ as an initial guess.
Set $\xi_0^\d := \xi_0$, $x_0^\d := x_0$ and calculate $r_0^\d:= A x_0^\d - y^\d$. 
Choose $\tau>1$ and $\gamma>0$. For all integers $k \ge 0$ do the following:  
\begin{enumerate}
\item[(i)] Set the step-size $\gamma_k$ by 
\begin{align*}
\gamma_k := \left\{\begin{array}{lll}
\gamma & \mbox{ if } \|r_k^\d\| > \tau \d,\\
0 & \mbox{ if } \|r_k^\d\| \le \tau \d;
\end{array}\right.
\end{align*}

\item[(ii)] Pick an index $i_k \in \{1, \cdots, b\}$ randomly via the uniform distribution; 

\item[(iii)] Update $\xi_{k+1}^\d$ by the equation (\ref{RBCD.72}), i.e. setting $\xi_{k+1,i}^\d = \xi_{k,i}^\d$ 
for $i \ne i_k$ and 
$$
\xi_{k+1, i_k}^\d = \xi_{k, i_k}^\d - \gamma_k A_{i_k}^* r_k^\d;
$$

\item[(iv)] Update $x_{k+1}^\d$ by the equation (\ref{RBCD.73}); 

\item[(v)] Calculate $r_{k+1}^\d = r_k^\d + A_{i_k}(x_{k+1, i_k}^\d - x_{k,i_k}^\d)$. 
\end{enumerate}
\end{algorithm}

For Algorithm \ref{alg:RBCD+DP} we have the following result which in particular shows 
that the iteration must terminate in finite many steps almost surely, i.e. there is an 
event of probability one such that, along any sample path in this event,  $\gamma_k = 0$ 
for sufficiently large $k$. 

\begin{theorem}
Let Assumption \ref{ass2} hold and consider Algorithm \ref{alg:RBCD+DP} with
$\gamma = \mu/\|A\|^2$ for some $0 < \mu < 4 \kappa (1-1/\tau)$. Then the iteration  
must terminate in finite many steps almost surely. If, in addition, all the conditions 
in Theorem \ref{RBCD.thm22} hold, then for any integer $k_\d$ with $k_\d\to \infty$ 
as $\d \to 0$ there holds 
\begin{align}\label{RBCD+.4}
\lim_{\d\to 0} \EE[\|x_{k_\d}^\d - x^\dag\|^2] =0,
\end{align}
where $x^\dag$ denotes the unique solution of (\ref{eq:problem}) satisfying (\ref{Rmin}). 
\end{theorem}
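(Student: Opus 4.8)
The plan is to treat the two assertions in turn, adapting the estimate of Lemma \ref{RBCD:lem3.2} and the argument of Theorem \ref{RBCD.thm22} to the fact that the step size $\gamma_k$ of Algorithm \ref{alg:RBCD+DP} is now a random, $\F_k$-measurable quantity. For the finite termination, fix a solution $\hat x$ of (\ref{eq:problem}) in $\mbox{dom}(\R)$ and set $\Delta_k^\d := D_\R^{\xi_k^\d}(\hat x, x_k^\d)$. Repeating the computation in the proof of Lemma \ref{RBCD:lem3.2} with $\gamma$ replaced throughout by $\gamma_k$ gives
$$
\EE[\Delta_{k+1}^\d|\F_k] - \Delta_k^\d
\le -\left(1 - \frac{\gamma_k\|A\|^2}{4\kappa}\right)\frac{\gamma_k}{b}\|A x_k^\d - y^\d\|^2
+ \frac{\gamma_k}{b}\,\d\,\|A x_k^\d - y^\d\|.
$$
When $\gamma_k = 0$ both sides vanish, and when $\gamma_k = \gamma = \mu/\|A\|^2$ the discrepancy test gives $\gamma_k\d \le (\gamma_k/\tau)\|A x_k^\d - y^\d\|$, so absorbing the cross term leaves $\EE[\Delta_{k+1}^\d|\F_k] - \Delta_k^\d \le -c_4\gamma_k\|A x_k^\d - y^\d\|^2$ with $c_4 := (1 - \mu/(4\kappa) - 1/\tau)/b$, which is positive precisely because $\mu < 4\kappa(1-1/\tau)$. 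Taking full expectations, telescoping, and using $\Delta_k^\d \ge 0$ gives $\sum_{k\ge 0}\EE[\gamma_k\|A x_k^\d - y^\d\|^2] \le \R(\hat x) - \R(x_0) < \infty$, hence $\sum_{k\ge 0}\gamma_k\|A x_k^\d - y^\d\|^2 < \infty$ almost surely. On the event that the iteration never stops we have $\gamma_k = \gamma$ and $\|A x_k^\d - y^\d\| > \tau\d$ for every $k$, so that series is bounded below by $\sum_k \gamma\tau^2\d^2 = \infty$; this event therefore has probability zero, which is the claimed almost sure finite termination. This part uses only Assumption \ref{ass2}.

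For (\ref{RBCD+.4}), now under the full hypotheses of Theorem \ref{RBCD.thm22}, put $\Delta_k^\d := D_\R^{\xi_k^\d}(x^\dag, x_k^\d)$. The inequality above (with $\hat x = x^\dag$) shows $\{\EE[\Delta_k^\d]\}_k$ is nonincreasing, so since $k_\d \to \infty$ we get, for every fixed $k$ and all small $\d$, $\EE[\Delta_{k_\d}^\d] \le \EE[\Delta_k^\d]$; it thus suffices to bound $\limsup_{\d\to 0}\EE[\Delta_k^\d]$ for fixed $k$ and then let $k\to\infty$. First I would record deterministic a priori bounds: since the Bregman proximal map $\xi\mapsto\arg\min_x\{\R(x)-\l\xi,x\r\}$ is $1/(2\kappa)$-Lipschitz (a consequence of the strong convexity of $\R$), we have $\|x_{k+1,i_k}^\d - x_{k,i_k}^\d\| \le (\gamma\|A\|/(2\kappa))\|r_k^\d\|$, whence $\|r_{k+1}^\d\| \le (1 + \mu/(2\kappa))\|r_k^\d\|$ and $\|\xi_k^\d\| \le \gamma\|A\|\sum_{j<k}\|r_j^\d\|$; for $\d \le 1$ these are bounded by constants depending only on $k$, so by (\ref{RBCD.61}) each $\Delta_k^\d$ is bounded above by a constant $M_k$, uniformly in $\d \le 1$ and in the sample path.

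Next I would prove the analogue of (\ref{RBCD+.2}) for Algorithm \ref{alg:RBCD+DP}: along every sample path, for each fixed $k$, $x_k^\d \to x_k$ and $\xi_k^\d \to \xi_k$ as $\d\to 0$, where $\{x_k,\xi_k\}$ is the exact-data Algorithm \ref{alg:RBCD+} sequence. This goes by induction on $k$ with a case split. If $\|Ax_k - y\| > 0$, then $r_k^\d \to Ax_k - y \ne 0$ forces $\gamma_k = \gamma$ for all small $\d$, and the updates of $\xi_{k+1}^\d$ and $x_{k+1}^\d$ are continuous perturbations of the exact ones (using continuity of the proximal map). If $\|Ax_k - y\| = 0$, then $\xi_{k+1} = \xi_k$ and $x_{k+1} = x_k$, while $\|\xi_{k+1}^\d - \xi_k^\d\| \le \gamma\|A\|\|r_k^\d\| \to 0$ irrespective of whether $\gamma_k$ is $0$ or $\gamma$, and continuity of the proximal map again closes the induction. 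Combining this pointwise convergence with the uniform bound $M_k$ and bounded convergence yields $\lim_{\d\to 0}\EE[\Delta_k^\d] = \EE[D_\R^{\xi_k}(x^\dag, x_k)]$, hence $\limsup_{\d\to 0}\EE[\Delta_{k_\d}^\d] \le \EE[D_\R^{\xi_k}(x^\dag, x_k)]$ for every $k$.

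Finally, exactly as in the proof of Theorem \ref{RBCD.thm22}, the boundedness hypothesis (\ref{bound}) together with the local Lipschitz continuity of $\R$ gives $D_\R^{\xi_k}(x^\dag, x_k) \le (L+M)\|x^\dag - x_k\|$ for suitable constants $L,M$, so $\EE[D_\R^{\xi_k}(x^\dag, x_k)] \le (L+M)(\EE[\|x_k - x^\dag\|^2])^{1/2}$; letting $k\to\infty$ and invoking (\ref{RBCD+.0}) (valid under the conditions of Theorem \ref{RBCD.thm22}) yields $\limsup_{\d\to 0}\EE[\Delta_{k_\d}^\d] \le 0$, i.e. $\EE[\Delta_{k_\d}^\d] \to 0$. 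Then (\ref{RBCD.62}), namely $\kappa\|x_{k_\d}^\d - x^\dag\|^2 \le \Delta_{k_\d}^\d$, gives (\ref{RBCD+.4}). I expect the main obstacle to be the third paragraph: because $\gamma_k$ can collapse to $0$, one cannot simply quote (\ref{RBCD+.2}) but must redo the stability induction with the case split above, and one must separately supply the deterministic a priori bounds that turn the pointwise convergence of $\Delta_k^\d$ into convergence of its expectation.
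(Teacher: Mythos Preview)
Your proof is correct and follows essentially the same route as the paper: the same adaptation of Lemma~\ref{RBCD:lem3.2} with the $\F_k$-measurable $\gamma_k$ yields the key monotonicity inequality $\EE[\Delta_{k+1}^\d]\le\EE[\Delta_k^\d]-c_4\EE[\gamma_k\|Ax_k^\d-y^\d\|^2]$, from which both finite termination and (\ref{RBCD+.4}) follow as in the paper. Your series argument for finite termination is equivalent to the paper's direct bound on $\P(\mathcal E)$.

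Where you differ is in being more careful than the paper. The paper simply writes ``analogous to the proof of Theorem~\ref{RBCD.thm22}'' for the passage $\limsup_{\d\to 0}\EE[\Delta_{k_\d}^\d]\le\EE[D_\R^{\xi_k}(x^\dag,x_k)]$, tacitly reusing (\ref{RBCD+.2}); you correctly observe that (\ref{RBCD+.2}) was proved for Algorithm~\ref{alg:RBCD+} and supply the missing case split (on whether $\|Ax_k-y\|>0$) needed to handle the possibility $\gamma_k=0$ in Algorithm~\ref{alg:RBCD+DP}. Your explicit a~priori bounds on $\|r_k^\d\|$, $\|\xi_k^\d\|$ and $\Delta_k^\d$ are valid but in fact unnecessary: for fixed $k$ the randomness lives on the finite set $\{1,\dots,b\}^k$, so the expectation is a finite sum and pointwise convergence of $\Delta_k^\d$ already gives $\EE[\Delta_k^\d]\to\EE[D_\R^{\xi_k}(x^\dag,x_k)]$ without any domination argument.
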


\begin{proof}
For any integer $k \ge 0$ let $\Delta_k^\d:= D_\R^{\xi_k^\d} (x^\dag, x_k^\d)$. 
By noting that $\gamma_k$ is $\F_k$-measurable, we may use the similar argument in the proof 
of Lemma \ref{RBCD:lem3.2} to obtain 
\begin{align*}
\EE[\Delta_{k+1}^\d|\F_k] -\Delta_k^\d 
& \le - \frac{1}{b}\left(1 - \frac{\gamma_k \|A\|^2}{4\kappa}\right) \gamma_k \|A x_k^\d - y^\d\|^2 
+ \frac{\gamma_k}{b} \d \|A x_k^\d - y^\d\|. 
\end{align*}
According to the definition of $\gamma_k$, we have $\gamma_k \le \gamma$ and 
$\gamma_k \d \le \frac{\gamma_k}{\tau} \|A x_k^\d - y^\d\|$. Therefore 
\begin{align*}
\EE[\Delta_{k+1}^\d|\F_k] -\Delta_k^\d 
& \le - \frac{1}{b}\left(1 - \frac{1}{\tau} - \frac{\gamma_k \|A\|^2}{4\kappa}\right) 
\gamma_k \|A x_k^\d - y^\d\|^2 \\
& \le - \frac{1}{b}\left(1 - \frac{1}{\tau} - \frac{\mu}{4\kappa}\right) 
\gamma_k \|A x_k^\d - y^\d\|^2
\end{align*}
Taking the full expectation then gives
\begin{align}\label{RBCD+.DP1}
\EE[\Delta_{k+1}^\d] \le \EE[\Delta_k^\d] - c_4 \EE\left[\gamma_k \|A x_k^\d - y^\d\|^2\right] 
\end{align}
for all $k\ge 0$, where $c_4: = (1-1/\tau - \mu/(4\kappa))/b > 0$.

Based on (\ref{RBCD+.DP1}) we now show that the method must terminate after finite many 
steps almost surely. To see this, consider the event
$$
{\mathcal E} := \left\{\|A x_k^\d - y^\d \|>\tau \d \mbox{ for all integers } k \ge 0\right\}
$$
It suffices to show $\P({\mathcal E}) = 0$. By virtue of (\ref{RBCD+.DP1}) we have 
\begin{align*}
c_4 \EE\left[\gamma_k \|A x_k^\d - y^\d\|^2\right] 
\le \EE[\Delta_k^\d] - \EE[\Delta_{k+1}^\d] 
\end{align*}
and hence for any integer $n \ge 0$ that 
\begin{align}\label{RBCD.DP2}
c_4 \sum_{k=0}^n \EE\left[\gamma_k \|A x_k^\d - y^\d\|^2\right]
\le \EE[\Delta_0^\d] = \R(\bar x) - \R(x_0) < \infty. 
\end{align}
Let $\chi_{\mathcal E}$ denote the characteristic function of ${\mathcal E}$, i.e.
$\chi_{\mathcal E}(\omega) =1$ if $\omega \in {\mathcal E}$ and $0$ otherwise. Then 
\begin{align*}
\EE\left[\gamma_k \|A x_k^\d - y^\d\|^2\right]
\ge \EE\left[\gamma_k \|A x_k^\d - y^\d\|^2\chi_{\mathcal E}\right] 
\ge \gamma \tau^2 \d^2 \EE[\chi_{\mathcal E}]
= \gamma \tau^2 \d^2 \P({\mathcal E}).
\end{align*}
Combining this with (\ref{RBCD.DP2}) gives 
$$
c_4 \gamma \tau^2 \d^2 (n+1) \P({\mathcal E}) \le \R(\bar x) - \R(x_0)
$$
for all $n \ge 0$ and hence $\P({\mathcal E}) \le (\R(\bar x) - \R(x_0))/(c_4 \gamma \tau^2 \d^2 (n+1))\to 0$ 
as $n \to \infty$. Thus $\P({\mathcal E}) =0$. 

Next we show (\ref{RBCD+.4}) under the conditions given in Theorem \ref{RBCD.thm22}. Let 
$k \ge 0$ be any integer. Since $k_\d \to \infty$, we have $k_\d >k$ for small $\d>0$. 
By virtue of (\ref{RBCD+.DP1}) and analogous to the proof of Theorem \ref{RBCD.thm22} we 
can obtain 
\begin{align*}
\limsup_{\d\to 0} \EE[\Delta_{k_\d}^\d] \le \EE\left[D_\R^{\xi_k}(x^\dag, x_k)\right]
\le C \left(\EE[\|x_k - x^\dag\|^2]\right)^{1/2},
\end{align*}
where $C$ is a generic constant independent of $k$. Letting $k \to \infty$ and using (\ref{RBCD+.0}) we 
thus obtain $\EE[\Delta_{k_\d}^\d] \to 0$ as $\d \to 0$ which together with the strong convexity of $\R$ 
implies (\ref{RBCD+.4}). The proof is therefore complete. 
\end{proof}

\section{\bf Numerical results}\label{sect4}

In this section, we present numerical simulations to verify the theoretical results 
and test the performance of the RBCD method for solving some specific linear ill-posed 
problems that arise from two imaging modalities, including X-ray computed tomography (CT) 
and coded aperture compressive temporal imaging. In all simulations, the noisy data 
$y^{\delta}$ is generated from the exact data $y$ by
$$
y^{\delta} = y + \delta_{\text{rel}} \|y\| \xi,
$$
where $\delta_{\text{rel}}$ is the relative noise level and $\xi$ is a normalized random 
noise obeying the standard Gaussian distribution (i.e., $\|\xi\| = 1$), then the noisy 
level $\delta = \delta_{\text{rel}} \|y\|$. The experiments  are carried out by \texttt{MATLAB} 
2021a on a laptop computer (1.80Hz Intel Core i7 processor with 16GB random access memory). 

\subsection{Computed tomography}
As the first testing example, we consider the standard 2D parallel beam X-ray CT from 
stimulated tomographic data to illustrate the performance of the RBCD method in this 
classical imaging task. The process in this modality consists of reconstructing a slice 
through the patient's body by collecting the attenuation of X-ray dose as they pass 
through the biological issues, which can be mathematically expressed as finding a compactly 
supported function from its line integrals, the so-called Radon transform.

In this example, we discretize the sought image into a 256×256 pixel grid and use the 
function \texttt{paralleltomo} from the package AIR TOOLS \cite{HS2012} in \texttt{MATLAB} 
to generate the discrete model. The true images we used in our experiments are the 
Shepp-Logan phantom and real chest CT image (see Figure \ref{fig:CTimage}). The real 
chest CT image is provided by the dataset \texttt{chestVolume} in \texttt{MATLAB}. If 
we consider the reconstruction from the tomographic data with $p$ projections and 367 
X-ray lines per projection, it leads to a linear system $Ax=y$ where $A$ is a coefficient 
matrix with the size $(367\times p)\times 65536$. Let $x^\dagger$ be the true image vector 
by stacking all the columns of the original image matrix and $y^\dagger= Ax^\dagger$ be 
the exact data. We divide the true solution $x^\dagger$ into $b$ blocks equally, and then 
the coefficient matrix $A$ also can be divided into $b$ blocks by column correspondingly.  

\begin{figure}[htb!]
    \centering
    \subfigure{
    \includegraphics[scale=0.45]{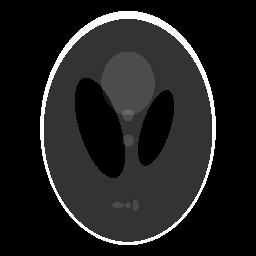}
}
\subfigure{
    \includegraphics[scale=0.45]{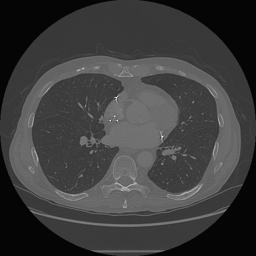}
} 
    \caption{The Sheep-Logan phantom (left) and a real CT image (right).}  
    \label{fig:CTimage} 
\end{figure}

Assuming the image to be reconstructed is the Shepp-Logan phantom, we compare the RBCD 
method with the cyclic block coordinate descent (CBCD) method studied in \cite{RNH2019} 
for solving the full angle CT problem from the exact tomographic data with 90 projection 
angles evenly distributed between 1 and 180 degrees. In theory, the inversion problem with 
the complete tomographic data is mildly ill-posed. To make a fair comparison, we run both 
the RBCD method (Algorithm \ref{alg:RBCD}) and the CBCD method 4000 iteration steps with 
the common parameters $b=8$, $\gamma = \mu/\|A\|^2$ with $\mu = 0.5$, and the intial guess 
$x_0 = 0$. The reconstruction results are reported in the left and middle figures of Figure 
\ref{fig:shepp-logan}, which shows two methods produce similar results by the noiseless 
tomographic data. To clarify the difference of two methods, we also plot the relative square 
errors $\|x_n-x^\dagger\|_{L^2}^2/\|x^\dagger\|_{L^2}^2$ of CBCD method with 500 iterations 
and the relative mean square errors $\EE[\|x_{n}-x^\dagger\|_{L^2}^2/\|x^\dagger\|_{L^2}^2]$ 
which are calculated approximately by the average of 100 independent runs of the RBCD method 
with 500 iterations in the right figure of Figure \ref{fig:shepp-logan}. To the best of our 
knowledge, for the general linear ill-posed problems (\ref{eq:problem}) by using the CBCD 
method, the convergence result by noiseless data and regularization property by noisy data 
have not been established so far.  

\begin{figure}[htb!]
    \centering
\subfigure{
    \includegraphics[scale=0.45]{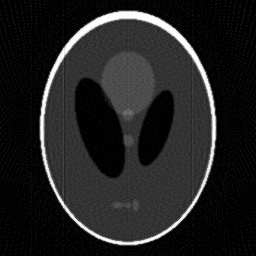}
}
\subfigure{
    \includegraphics[scale=0.45]{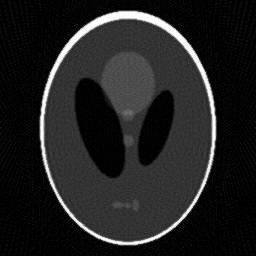}
}
\subfigure{
    \includegraphics[scale=0.2]{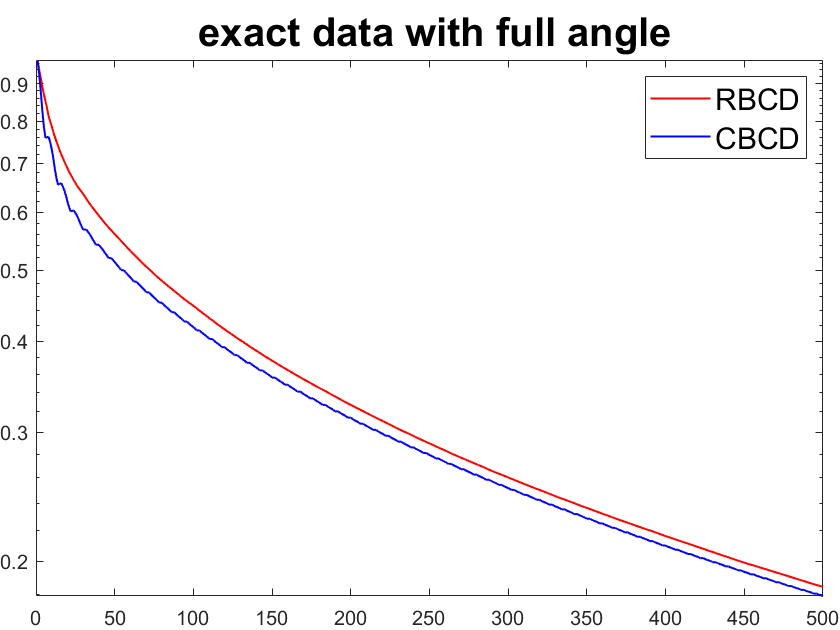}
}
    \caption{The reconstruction results by the RBCD method (left) and CBCD method (middle), 
    and the numerical reconstruction errors of two methods (right) from the exact full 
    angle tomographic data of the Shepp-Logan phantom, respectively.}    
    \label{fig:shepp-logan}
\end{figure}

\begin{table}[htb]
    \centering
    \begin{tabular}{lllllll}
    \toprule
        block number &  1  & 2 & 4 & 8 & 16\\
    \midrule   
        iteration number & 202 & 205 & 424 & 870 & 1819 \\
        Time (s) & 9.4262 & 4.9443 & 5.1250 & 5.5956 & 6.5644\\      
    \bottomrule
    \end{tabular}
    \caption{The reconstruction results by the Algorithm \ref{alg:RBCD} with different
    block numbers from the exact full angle tomographic data of the Shepp-Logan phantom.}
    \label{tab:Landweber_vs_RBCD}
\end{table}

To further compare the RBCD method with the Landweber iteration, we consider Algorithm 
\ref{alg:RBCD} for various block numbers $b=1,2,4,8,16$ to solve the above same problem 
using the same exact tomographic data. Note that Landweber iteration can be seen as the 
special case of Algorithm \ref{alg:RBCD} with $b=1$. To give a fair comparison, for all 
experiments with different block numbers we use the same step-size choose $\gamma = \mu/\|A\|^2$ 
with $\mu = 1.99$ and terminate the algorithm when the relative errors satisfy 
$\|x_k^\d - x^\dag\|_{L^2}^2/\| x^\dag\|_{L^2}^2 < 0.05$ for the first time. We perform 
100 independent runs and calculate the average of the required iteration number and 
computational time. The results of these experiments are recorded in Table 
\ref{tab:Landweber_vs_RBCD} which show that to get the same relative error, larger 
iteration number is required if a larger block number is applied; however, the computational 
times of the RBCD method with $b=2,4,8,16$ are less than that of Landweber iteration (i.e., $b=1$). 

\begin{figure}[htb]
\centering
\subfigure{
\includegraphics[scale=0.45]{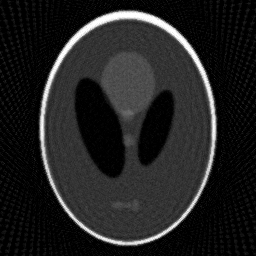} 
}
\subfigure{
\includegraphics[scale=0.45]{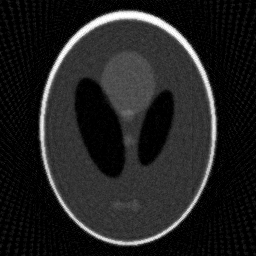} 
}
\subfigure{
\includegraphics[scale=0.45]{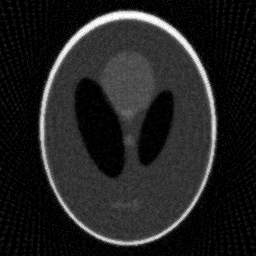}
}
\\
\subfigure{
\includegraphics[scale=0.45]{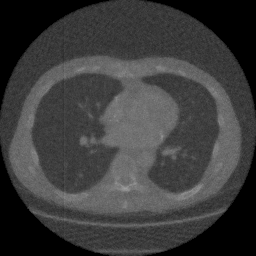}  
}
\subfigure{
\includegraphics[scale=0.45]{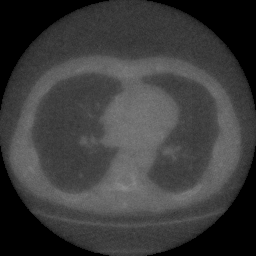} 
}
\subfigure{
\includegraphics[scale=0.45]{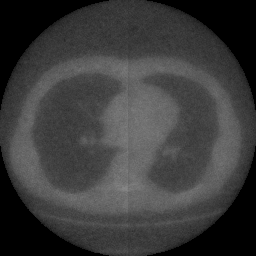} 
}
\caption{The reconstruction results of Algorithm \ref{alg:RBCD_DP} from the sparse view tomographic data with 60 projection angles and three relative noise levels $\delta_{\text{rel}}=0.01$ (left), $0.02$ (middle), and $0.03$ (right) of the Shepp-Logan phantom (top) and real chest CT image (bottom).}
\label{fig:spare_view_data}
\end{figure}

\begin{figure}[htb]
\centering
\subfigure{
\includegraphics[scale=0.45]{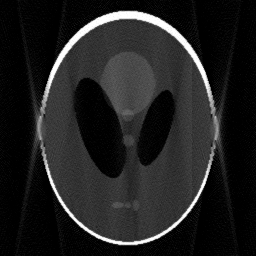} 
}
\subfigure{
\includegraphics[scale=0.45]{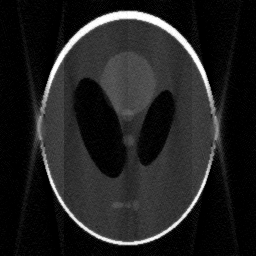} 
}
\subfigure{
\includegraphics[scale=0.45]{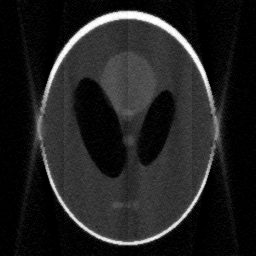}
}
\\
\subfigure{
\includegraphics[scale=0.45]{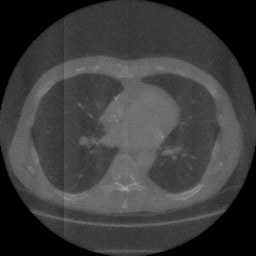}  
}
\subfigure{
\includegraphics[scale=0.45]{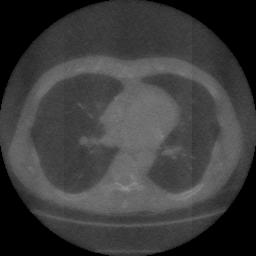} 
}
\subfigure{
\includegraphics[scale=0.45]{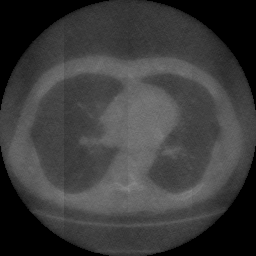} 
}
\caption{The reconstruction results of Algorithm \ref{alg:RBCD_DP} from the limited angle tomographic data with 160 projection angles within $[10^\circ,170^\circ)$ with $1^\circ$ steps and three relative noise levels $\delta_{\text{rel}}=0.01$ (left), $0.02$ (middle), and $0.03$ (right) of the Shepp-Logan phantom (top) and real chest CT image (bottom).}
\label{fig:limited_angle_data}
\end{figure}

It is well-known that if the tomographic data is not complete (here, we only consider the collection for the angular variable of the Radon transform is incomplete), the computed tomography problem is severely ill-posed \cite{L1986}. To test the performance on the ill-posed imaging problem, we use the RBCD method for solving the CT problem by the incomplete view (including limited angle and sparse view cases) tomographic data with distinct three relative noise levels $\delta_{\text{rel}} = 0.01, 0.02$ and $0.03$. The original images here are chosen as the Shepp-Logan phantom and a real chest CT image in Figure \ref{fig:CTimage}. We consider the stimulated spare view tomographic data with 60 projection angles evenly distributed between 1 and 180 degrees and simulated limited angle tomographic data with 160 projection angles within the angular range of $[10^\circ,170^\circ)$ with $1^\circ$ steps, respectively.  In these experiments, we use the Algorithm \ref{alg:RBCD_DP} with the block number $b=4$, $\mu=0.18$ and $\tau=1.1$, and plot the reconstruction results of the sparse view tomography and limited angle tomography in Figure \ref{fig:spare_view_data} and Figure \ref{fig:limited_angle_data}, respectively. It can be shown in Figure \ref{fig:spare_view_data} and \ref{fig:limited_angle_data} that Algorithm \ref{alg:RBCD_DP} can always terminate within a suitable iteration time and thus reconstruct a valid image from the incomplete view tomographic data with relatively small noise (i.e., $\delta_{\text{rel}}$ no more than 0.02 for the Shepp-Logan phantom and 0.01 for the real chest CT image). For the reconstruction results of limited angle tomography, we can observe that the boundary of the image which is tangent to the missing angle in data is hard to recover, and there exists streak artifacts in the reconstruction result. The theoretical reason for these phenomena can be found in \cite{BFJ2018,FQ2013,Q1993}.

\subsection{Compressive temporal imaging}\label{subsect4.2}

In this example, the RBCD method is applied to an application of the video reconstruction, 
the so-called codedaperture compressive temporal imaging \cite{LLY2013}. In this imaging 
modality, one uses a 2D detector to capture the 3D video data in only one snapshot 
measurement by a time-varying mask stack and then reconstructs the video from the coded 
compressed data using the appropriate algorithm.

As described in (\ref{eq:CACTI}), (\ref{eq:CACTI_1}), and (\ref{eq:CACTI_2}) in the 
introduction, the coded aperture compressive temporal imaging in the continuous case 
can be formulated as the general problem (\ref{eq:problem}). To numerically implement 
this application, we need to discretize the continuous model appropriately.

Let $T_i$ be $i$-th temporal interval defined in (\ref{eq:CACTI_1}) such that $|T_i| =1$ 
for $i=1,\ldots,b$. If $b$ is large enough so that the length of each interval $T_i$ is 
short enough relative to the total interval length $|T|$, we may assume that the functions 
$x_i(s,t)$ and $m_i(s,t)$ only depend on the spatial variable $s\in\Omega$, i.e. they 
are time-invariant. Thus, for any $s=(s_1,s_2) \in \Omega $,
$$
m_i(s,t)\equiv m_i^{T_i}(s_1,s_2), \quad 
x_i(s,t)\equiv x_i^{T_i}(s_1,s_2).
$$ 
Denote the video to be reconstructed by $\{x_i^{T_i}\}_{i=1}^{b}$, where $x_i^{T_i}$ is 
the frame within $i$-th time interval and the snapshot measurement by $y$, and suppose 
the mask stack $\{m_i^{T_i}\}_{i=1}^b$ is known. Then the mathematical problem in the 
semi-discrete scheme is to solve the following equation given the measurement $y$ and 
known $m_i^{T_i}$:
\begin{equation}\label{eq:CACTI_3}
    \sum_{i=1}^{b} m_i^{T_i}(s_1,s_2) x_i^{T_i}(s_1,s_2) = y(s_1,s_2).
\end{equation}
Next, we discretize the spatial domain $\Omega$ into a $256\times 256$ pixel grid. The video, 
mask stack, and snapshot measurement are represented by the matrices $\{X_i(m,n)\}_{i=1}^b$,
$\{M_i(m,n)\}_{i=1}^b$, and $Y(m,n)$, $m,n=1,\ldots,256$, respectively. Let 
$\texttt{x}_i\in \mathbb{R}^{256^2\times 1}$ and $\texttt{y}\in \mathbb{R}^{256^2\times 1}$ 
be the vectors obtained by stacking all columns of the matrices $X_i(m,n)$ and $Y(m,n)$.
Define $\texttt{M}_{i}:= \texttt{Diag}(M_i) \in \mathbb{R}^{256^2\times 256^2}$ by arranging 
all the elements of $M_i$ on the diagonal of the matrix $\texttt{M}_{i}$ correspondingly. 
Therefore, equation (\ref{eq:CACTI_3}) is transferred into the fully discrete linear equations
\begin{equation}\label{eq:CACTI_4}
    \sum_{i=1}^b \texttt{M}_i \texttt{x}_i = \texttt{y}.
\end{equation}
In practice, the number of frames $b$ in the video can naturally be chosen as the block number 
for numerical experiments. 

\begin{figure}[htb]
\centering
\subfigure{
\includegraphics[scale=0.32]{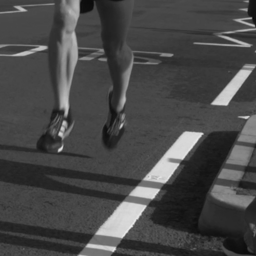} 
}
\subfigure{
\includegraphics[scale=0.32]{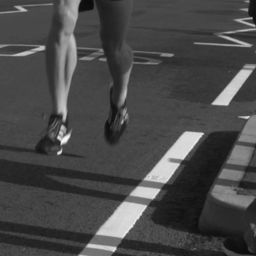}
}
\subfigure{
\includegraphics[scale=0.32]{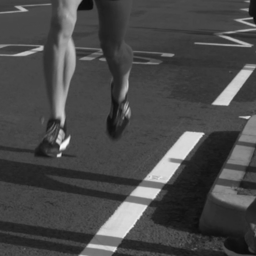}  
}
\subfigure{
\includegraphics[scale=0.32]{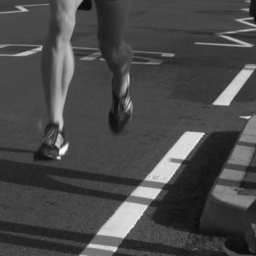} 
}
\\
\subfigure{
\includegraphics[scale=0.32]{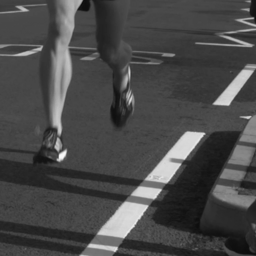} 
}
\subfigure{
\includegraphics[scale=0.32]{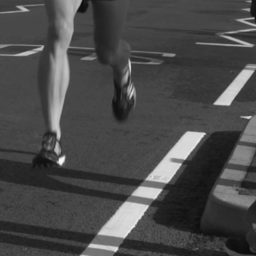} 
}
\subfigure{
\includegraphics[scale=0.32]{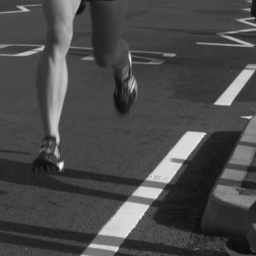}
}
\subfigure{
\includegraphics[scale=0.32]{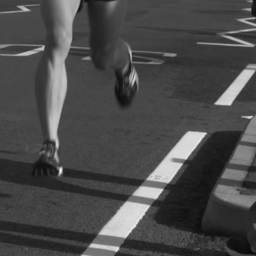}  
}
\caption{The original video of the dataset \texttt{Runner}.}
\label{fig:runner_ture}
\end{figure}

\begin{figure}[htb]
\centering
\subfigure{
\includegraphics[scale=0.32]{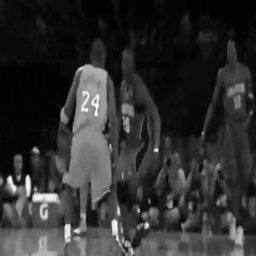} 
}
\subfigure{
\includegraphics[scale=0.32]{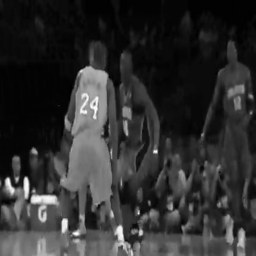}
}
\subfigure{
\includegraphics[scale=0.32]{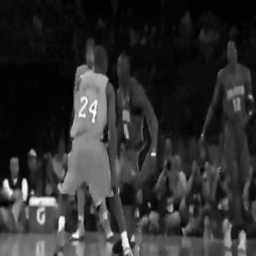}  
}
\subfigure{
\includegraphics[scale=0.32]{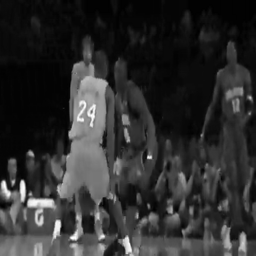} 
}
\\
\subfigure{
\includegraphics[scale=0.32]{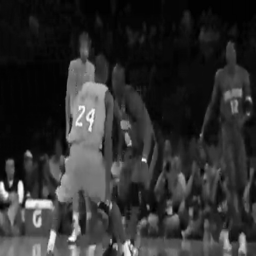} 
}
\subfigure{
\includegraphics[scale=0.32]{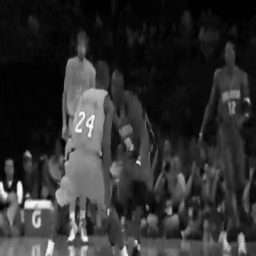} 
}
\subfigure{
\includegraphics[scale=0.32]{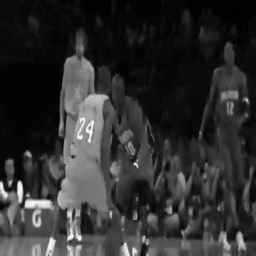}
}
\subfigure{
\includegraphics[scale=0.32]{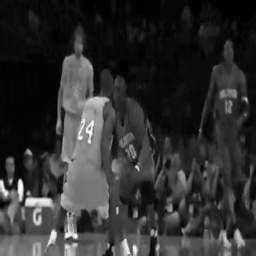}  
}
\caption{The original video of the dataset \texttt{Kobe}.}
\label{fig:Kobe_ture}
\end{figure}

\begin{figure}[!ht]
\centering
\subfigure{
\includegraphics[scale=0.45]{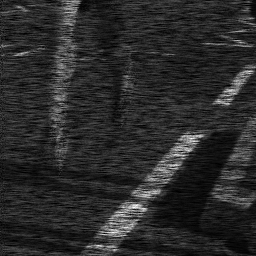} 
}
\subfigure{
\includegraphics[scale=0.45]{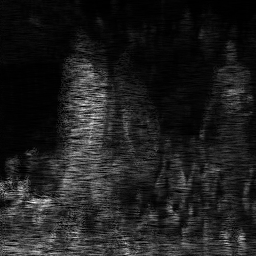}
}
\caption{The exact snapshot measurements of \texttt{Runner} and \texttt{Kobe}.}
\label{fig:video_measurement}
\end{figure}

We assume that the true videos are chosen from the datasets \texttt{Runner} \cite{Rdata} 
and \texttt{Kobe} \cite{YYL2014} with $b=8$ as shown in Figures \ref{fig:runner_ture} 
and \ref{fig:Kobe_ture}. The mask in the stack $\{M_i(m,n)\}_{i=1}^b$ is selected as 
shifting random binary matrices in which the first mask is simulated by a random binary 
atrix with elements drawn from a Bernoulli distribution with parameter $0.5$ and the 
subsequent masks shift one pixel horizontally in order \cite{YYL2014}. Let 
$\texttt{x}^\dagger=(\texttt{x}_1^\dagger,\ldots,\texttt{x}_b^\dagger)$ be the true 
solution and $\texttt{M}=(\texttt{M}_1,\ldots,\texttt{M}_b)$ be the forward matrix with 
$\|\texttt{M}\|^2 = b$. The exact snapshot measurements of the datasets \texttt{Runner} 
and \texttt{Kobe} are given by $\texttt{y}^\dagger :=\sum_{i=1}^b \texttt{M}_i \texttt{x}_i^\dagger$ 
as shown in Figure \ref{fig:video_measurement}. In this application, we need to recover 
$\texttt{x}^\dagger \in \mathbb{R}^{(b\times 256^2)\times 1}$ from the noisy measurement 
$\texttt{y}^\delta \in \mathbb{R}^{256^2\times 1}$ with a relative noise level 
$\d_{\text{rel}} = 0.01$. This implies that the linear system (\ref{eq:CACTI_4}) is 
under-determined due to the compression 
between different frames. Additionally, it is important to note that a significant amount of useful information in each frame of 
the original videos will be lost since nearly half of the elements in every mask are zero.

\begin{figure}[htb]
\centering
\subfigure{
\includegraphics[scale=0.32]{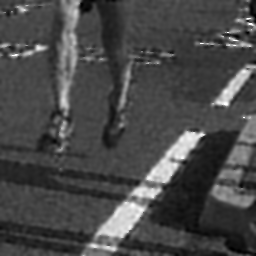} 
}
\subfigure{
\includegraphics[scale=0.32]{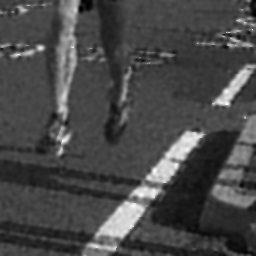}
}
\subfigure{
\includegraphics[scale=0.32]{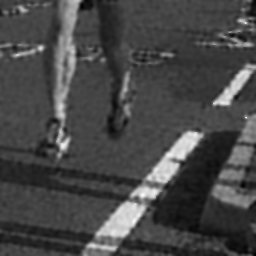}  
}
\subfigure{
\includegraphics[scale=0.32]{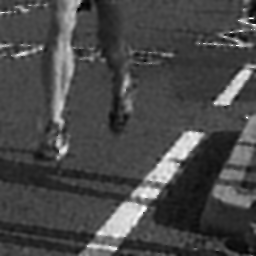} 
}\\
\subfigure{
\includegraphics[scale=0.32]{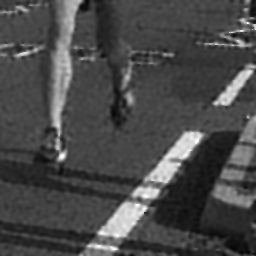} 
}
\subfigure{
\includegraphics[scale=0.32]{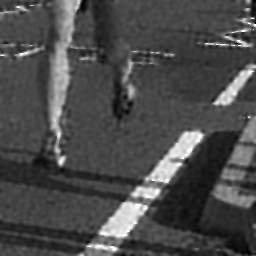} 
}
\subfigure{
\includegraphics[scale=0.32]{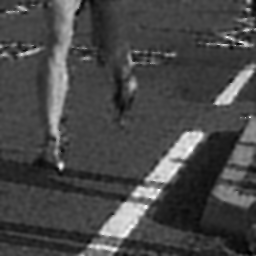} 
}
\subfigure{
\includegraphics[scale=0.32]{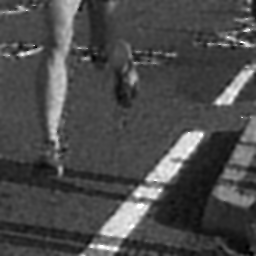} 
}
\caption{The reconstruction results by Algorithm \ref{alg:RBCD+} with $\R$ given by (\ref{TV}) 
from the noisy snapshot measurement with the relative noise level $\delta_{\text{rel}}=0.01$ 
of the dataset \texttt{Runner}.}
\label{fig:runner_reconstruction}
\end{figure}

\begin{figure}[htb]
\centering
\subfigure{
\includegraphics[scale=0.32]{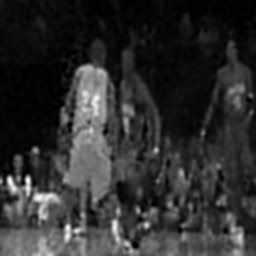} 
}
\subfigure{
\includegraphics[scale=0.32]{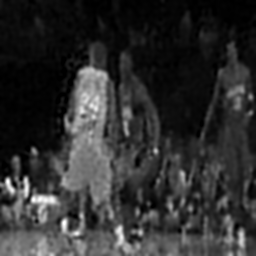}
}
\subfigure{
\includegraphics[scale=0.32]{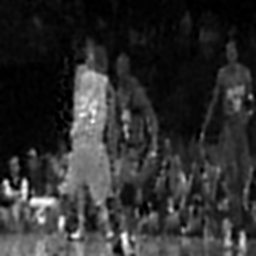}  
}
\subfigure{
\includegraphics[scale=0.32]{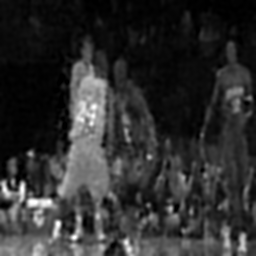} 
}\\
\subfigure{
\includegraphics[scale=0.32]{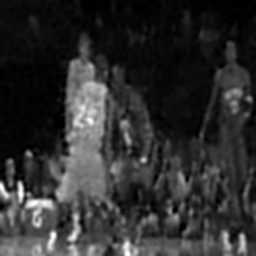} 
}
\subfigure{
\includegraphics[scale=0.32]{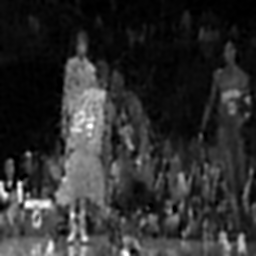} 
}
\subfigure{
\includegraphics[scale=0.32]{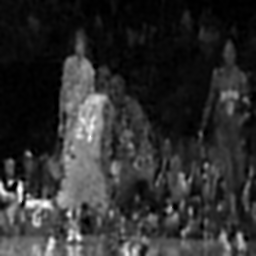} 
}
\subfigure{
\includegraphics[scale=0.32]{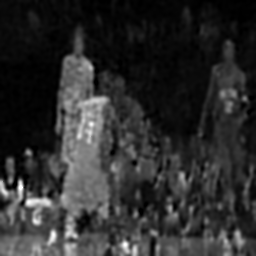} 
}
\caption{The reconstruction results by Algorithm \ref{alg:RBCD+} with $\R$ given by (\ref{TV}) 
from the noisy snapshot measurement with the relative noise level $\delta_{\text{rel}}=0.01$ 
of the dataset \texttt{Kobe}.}
\label{fig:kobe_reconstruction}
\end{figure}

To solve this ill-posed video reconstruction problem, we incorporate the piece-wise
constancy feature of the video into our proposed method, Algorithm \ref{alg:RBCD+}. To this 
end, we take 
\begin{align}\label{TV}
\R(\texttt{x}) = \sum_{i=1}^b \R_i(\texttt{x}_i) \quad \mbox{with } 
\R_i(\texttt{x}_i):= \frac{1}{2} \| \texttt{x}_i \|^2 + \lambda |\texttt{x}_i|_{TV},
\end{align}
where $\lambda>0$ is a positive constant and $|\texttt{x}_i|_{TV}$ denotes the 2-dimensional 
isotropic total variation of $\texttt{x}_i$ for each frame; see \cite{BT2009}. It is 
clear that $\R$ satisfies Assumption \ref{ass2} with $\kappa = 1/2$. In order to implement 
Algorithm \ref{alg:RBCD+}, we need to update $\texttt{x}_{k+1, i_k}^\d$ from $\xi_{k+1,i_k}^\d$
which requires to solving a total variation denoising problem of the form  
$$
\texttt{x}_{k+1,i_k}^\d = \arg\min_z \left\{\R_{i_k}(z) - \l \xi_{k+1, i_k}^\d, z\r\right\}
=\arg\min_z\left\{\lambda |z|_{TV} + \frac{1}{2} \|z - \xi_{k+1, i_k}^\d\|^2\right\}.
$$
In the experiment, this sub-problem is solved by the iterative clipping algorithm \cite{BT2009}.
When executing Algorithm \ref{alg:RBCD+}, we use $\xi_0 =0$ and $\gamma = 2\kappa \mu/b$ with 
$\mu=1.99$ and for the datasets \texttt{Runner} and \texttt{Kobe} we use $\lambda = 15$ and $30$
respectively. We run the algorithm for $1500$ iteration steps and report the reconstruction 
results in Figure \ref{fig:runner_reconstruction} and \ref{fig:kobe_reconstruction}. The 
detailed numerical results of the above experiments are recorded in Table \ref{tab:CACTI_reconstruction}, 
including the computational time, the peak signal-to-noise ratio (PSNR), the structural similarity 
index measure (SSIM) \cite{WBS2004}, and the relative error. These results demonstarte that, with 
a TV denoiser in the domain of prime variable, Algorithm \ref{alg:RBCD+} can recover a great deal 
of missing information from the compressed snapshot measurement and thus reconstruct a good 
approximation of the original video within an acceptable amount of time even if the data is 
corrupted by noise.

\begin{table}[htb]
    \centering
    \begin{tabular}{lllllll}
    \toprule
         &  Time (s)  & PSNR (db) & SSIM & relative error \\
    \midrule   
        \texttt{Runner} & 23.5595 & 27.8292 & 0.8012  & 0.0154  \\
        \texttt{Kobe} & 23.6561 & 28.3153 & 0.8883 & 0.0216 \\      
    \bottomrule
    \end{tabular}
    \caption{Numerical results of Algorithm \ref{alg:RBCD+} with $\R$ given by (\ref{TV}) 
    for the datasets \texttt{Runner} and \texttt{Kobe} using the noisy snapshot measurement 
    with the relative noise level $\delta_{\text{rel}}=0.01$.}
    \label{tab:CACTI_reconstruction}   
\end{table}


%
%
%
%

Considering the same problem for \texttt{Runner} and \texttt{Kobe} using the noisy data
with $\delta_{\text{rel}}=0.01$, we run the Algorithm \ref{alg:RBCD+DP} with $\tau=2$, $\mu=1.99$, 
and $\R$ given by (\ref{TV}).We use $\lambda = 15$ for the datasets \texttt{Runner} and 
$\lambda = 30$ for the dataset \texttt{Kobe}. This algorithm terminates the iteration based 
on the {\it a posteriori} discrepancy principle. The reconstruction results are shown in 
Figures \ref{fig:runner_reconstruction_DP} and \ref{fig:kobe_reconstruction_DP}. We also 
record the stopping index $k_\delta$, PSNR, SSIM, and relative error of the reconstructions 
in Table \ref{tab:CACTI_reconstruction_DP}. 
It can be observed that Algorithm \ref{alg:RBCD+DP} consistently terminates the iterations 
within a finite number of steps and provides efficient reconstructions of the true videos, 
even when the data is corrupted by noise. Therefore, Algorithm \ref{alg:RBCD+DP} effectively 
meets the key requirement of automatic stopping for this large-scale video reconstruction 
in practice.
\begin{figure}[htb]
\centering
\subfigure{
\includegraphics[scale=0.32]{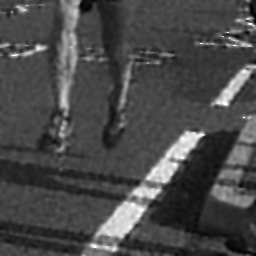} 
}
\subfigure{
\includegraphics[scale=0.32]{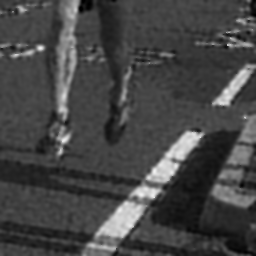}
}
\subfigure{
\includegraphics[scale=0.32]{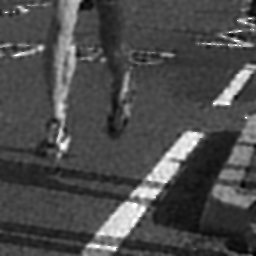}  
}
\subfigure{
\includegraphics[scale=0.32]{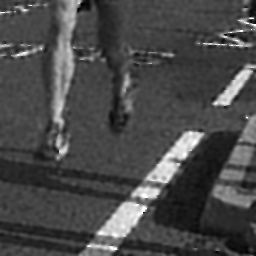} 
}\\
\subfigure{
\includegraphics[scale=0.32]{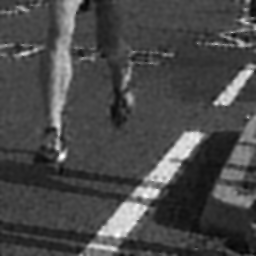} 
}
\subfigure{
\includegraphics[scale=0.32]{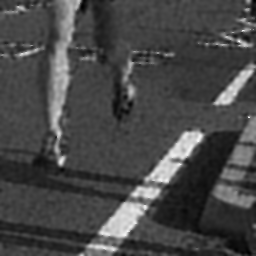} 
}
\subfigure{
\includegraphics[scale=0.32]{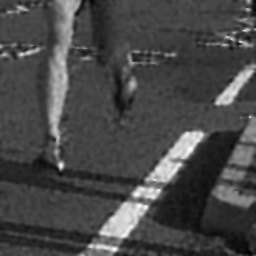} 
}
\subfigure{
\includegraphics[scale=0.32]{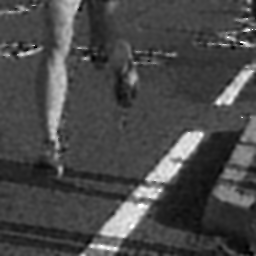} 
}
\caption{The reconstruction results by Algorithm \ref{alg:RBCD+DP} with $\R$ given by 
(\ref{TV}) from the noisy snapshot measurement with the relative noise level 
$\delta_{\text{rel}}=0.01$ of the dataset \texttt{Runner}.}
\label{fig:runner_reconstruction_DP}
\end{figure}

\begin{figure}[htb]
\centering
\subfigure{
\includegraphics[scale=0.32]{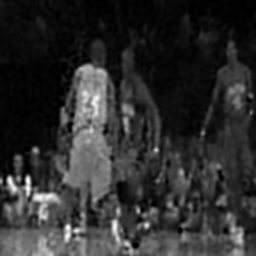} 
}
\subfigure{
\includegraphics[scale=0.32]{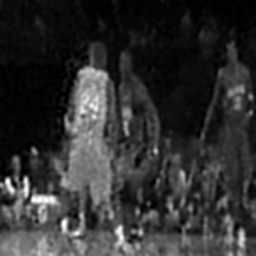}
}
\subfigure{
\includegraphics[scale=0.32]{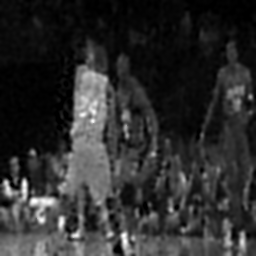}  
}
\subfigure{
\includegraphics[scale=0.32]{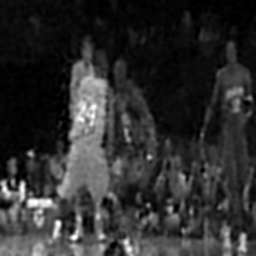} 
}\\
\subfigure{
\includegraphics[scale=0.32]{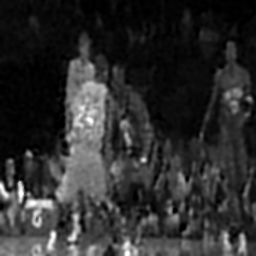} 
}
\subfigure{
\includegraphics[scale=0.32]{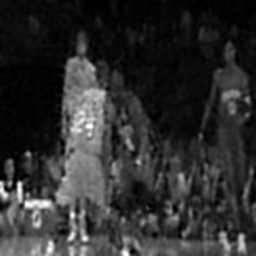} 
}
\subfigure{
\includegraphics[scale=0.32]{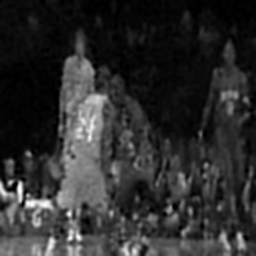} 
}
\subfigure{
\includegraphics[scale=0.32]{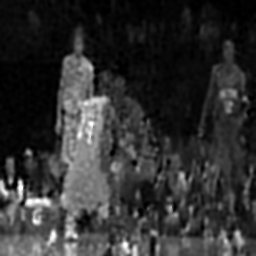} 
}
\caption{The reconstruction results by Algorithm \ref{alg:RBCD+DP} with $\R$ given by 
(\ref{TV}) from the noisy snapshot measurement with the relative noise level 
$\delta_{\text{rel}}=0.01$ of the dataset \texttt{Kobe}.}
\label{fig:kobe_reconstruction_DP}
\end{figure}

\begin{table}[htb]
    \centering
    \begin{tabular}{lllllll}
    \toprule
         & $k_\delta$  & PSNR (db) & SSIM & relative error \\
    \midrule   
        \texttt{Runner} & 1306 & 27.5785 & 0.7983  & 0.0163  \\
        \texttt{Kobe} & 2611 & 28.4458 & 0.8842 & 0.0209 \\      
    \bottomrule
    \end{tabular}
    \caption{Numerical results of Algorithm \ref{alg:RBCD+DP} with $\R$ gicen by (\ref{TV})
    for the datasets \texttt{Runner} and \texttt{Kobe} using the noisy snapshot measurement 
    with the relative noise level $\delta_{\text{rel}}=0.01$.}
    \label{tab:CACTI_reconstruction_DP}   
\end{table} 

\section{\bf Conclusions}

In this paper, we addressed linear ill-posed inverse problems of the separable form 
$\sum_{i=1}^{b} A_i x_i =y$ in Hilbert spaces and proposed a randomized block 
coordinate descent (RBCD) method for solving such problems. Although the RBCD method 
has been extensively studied for well-posed convex optimization problems, there has 
been a lack of convergence analysis for ill-posed problems. In this work, we 
investigated the convergence properties of the RBCD method with noisy data under both 
\textit{a priori} and \textit{a posteriori} stopping rules. We proved that the RBCD 
method, when combined with an \textit{a priori} stopping rule, serves as a convergent 
regularization method in the sense of weak convergence almost surely. Additionally, 
we explored the early stopping of the RBCD method, demonstrating that the discrepancy 
principle can terminate the iteration after a finite number of steps almost surely. 
Furthermore, we developed a strategy to incorporate convex regularization terms into 
the RBCD method to enhance the detection of solution features. To validate the 
performance of our proposed method, we conducted numerical simulations, demonstrating 
its effectiveness.

It should be noted that when the sought solution is smooth and decomposed into many 
small blocks artificially, the numerical results obtained by the RBCD method, which 
are not reported here, may not meet expectations due to the mismatch between adjacent 
blocks. Therefore, there is significant interest in developing novel strategies to 
overcome this mismatch between adjacent blocks, thereby improving the performance of 
the RBCD method. However, in cases where the object to be reconstructed naturally 
consists of many separate frames, such as the coded aperture compressive temporal imaging, 
the RBCD method demonstrates its effectiveness and yields satisfactory results.

\section*{\bf Acknowledgement}

\noindent
The work of Q. Jin is partially supported by the Future Fellowship of the Australian Research Council (FT170100231).
The work of D. Liu is supported by the China Scholarship Council program (Project ID: 202307090070).

\end{document}